\newtheorem{theorem}{Theorem}[section]
\newtheorem{corollary}{Corollary}[theorem]
\newtheorem{lemma}[theorem]{Lemma}
\newtheorem{proposition}[theorem]{Proposition}
\newtheorem{notation}{Notation}[section]
\newtheorem{remark}{Remark}[section]
\newtheorem{definition}{Definition}[section]
\newcommand{\ba}{\begin{array}}
\newcommand{\ea}{\end{array}}
\newcommand{\vone}{\vskip 2ex}
\newcommand{\be}{\begin{equation}}
\newcommand{\ee}{\end{equation}}
\newcommand{\beano}{\begin{eqnarray*}}
\newcommand{\eeano}{\end{eqnarray*}}
\def\bmatrix#1{\left[ \begin{matrix} #1 \end{matrix} \right]}
\def\R{{\mathbb R}}
\def\C{{\mathbb C}}
\def \det{\mathrm{det}}
\def\lam{{\lambda}}
\title{Structural balance and spectral properties of generalized corona product of signed graphs}
\author{ Amrik Singh\thanks{Department of Electrical Engineering, IIT Jodhpur, India Email: amrik.iitj@gmail.com}, \,\, Ravi Srivastava\thanks{Department of Mathematics, NIT Sikkim, India Email: ravi@nitsikkim.ac.in}, \,\, Bibhas Adhikari\thanks{Department of Mathematics, IIT Kharagpur, India Email: bibhas@maths.iitkgp.ac.in}\, \thanks{The author currently works at Fujitsu Research of America, Inc., Sunnyvale, California, USA}, \,\, Sandeep Kumar Yadav\thanks{Department of Electrical Engineering, IIT Jodhpur, India Email: sy@iitj.ac.in }}
\date{}
\begin{document}
\maketitle

{\small \noindent{\bf Abstract.} In this paper, we extend our earlier proposal of corona product of signed graphs into generalized corona product of signed graphs inspired by the generalized corona product of unsigned graphs. Then we study structural balance and spectral properties of these graphs. Utilizing the notion of coronal of a graph, we determine computable formulae of characteristic, Laplacian, and signless Laplacian polynomials of generalized corona product of signed graphs. Finally, we provide sufficient conditions for the generalized corona product of some distinct collections of signed graphs to be co-spectral.


\vone \noindent{\bf Keywords.} generalized corona product, structural balance, Laplacian matrix, coronal 

\section{Introduction}\label{sec:1}
Corona product graphs are extremely useful for the proposal of complex network generative models for the generation of growing networks \cite{sharma2017structural,lv2015corona,sharma2019self,qi2018extended,wang2022modeling}. Recently, we have proposed the notion of corona product of two signed graphs as follows. Given two signed graphs, say $G^s_{\mu_{1}}=(V_{\mu_1},E^s_1,\sigma_1)$  and $H^s_{\mu_{2}}=(V_{\mu_2},E_2^s,\sigma_2),$ the corona product graph $G^s_{\mu}\circ H^s_{\mu}$ is defined by utilizing the framework of marked graphs \cite{bibhas2019signed}. Here $\sigma_j$ denotes the signature function, which assigns the signs $\pm$ (positive or negative) to the edges of the underlying (unsigned) graph, $j=1,2$. Marked graphs are signed graphs with a marking scheme that assigns signs to the graph's vertices. In this paper, we define and study generalized corona product of signed graphs by extending the notion of corona product of two signed graphs.

Recall that Cartwright and Harary first proposed the signed graph model to represent social relations by generalizing Heider’s theory of balanced cognitive unit \cite{cartwright1956structural}. The structural balance theory for signed graphs has recently been advanced in literature to model real-world phenomena. The edges with $\pm$ signs model conflicting binary relationships between data points. For instance, trust and distrust, friend and enmity, and agree or disagree are often modeled as $\pm$ edges in real-world graphs \cite{guha2004propagation,leskovec2010signed}. These models play a pivotal role in the analysis of salient features of the data points. 



The structural balance property of a signed graph is studied in terms of the number of balance cycles in the graph \cite{doreian2013brief}. A signed cycle graph is called a balanced cycle if it contains an even number of negative edges. Next, a signed graph is balanced if all its cycles (subgraphs) are balanced. In particular, cycles on three vertices, known as triads, also play a significant role in the structural balance theory. The four types of signed triads that can be present in a signed graph are exhibited in Figure \ref{fig:triangles}, where $T_j$ denotes the triad with $j$ negative edges, $j=0,1,2,3.$


 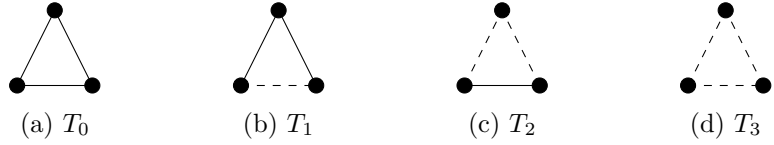
\begin{figure}
			\centering	
			\begin{subfigure}[b]{0.2\textwidth}
				\centering
				\begin{tikzpicture}
				\draw [fill] (0, 0) circle [radius=0.1];
				\draw [fill] (1, 0) circle [radius=0.1];
				\draw (0,0) --(1,0);
				\draw [fill] (0.5, 1) circle [radius=0.1];
				\draw (0,0) -- (0.5,1);
				\draw (1,0) -- (0.5,1);
				\end{tikzpicture}
				\caption{$T_0$}
			\end{subfigure}
			\begin{subfigure}[b]{0.2\textwidth}
				\centering
				\begin{tikzpicture}
				\draw [fill] (0, 0) circle [radius=0.1];
				\draw [fill] (1, 0) circle [radius=0.1];
				\draw [dashed] (0,0) --(1,0);
				\draw [fill] (0.5, 1) circle [radius=0.1];
				\draw (0,0) -- (0.5,1);
				\draw (1,0) -- (0.5,1);
				\end{tikzpicture}
				\caption{$T_1$}
			\end{subfigure}		
			\begin{subfigure}[b]{0.2\textwidth}
				\centering
				\begin{tikzpicture}
				\draw [fill] (0, 0) circle [radius=0.1];
				\draw [fill] (1, 0) circle [radius=0.1];
				\draw (0,0) --(1,0);
				\draw [fill] (0.5, 1) circle [radius=0.1];
				\draw [dashed] (0,0) -- (0.5,1);
				\draw [dashed] (1,0) -- (0.5,1);
				\end{tikzpicture}
				\caption{$T_2$}
			\end{subfigure}			
			\begin{subfigure}[b]{0.20\textwidth}
				\centering
				\begin{tikzpicture}
				\draw [fill] (0, 0) circle [radius=0.1];
				\draw [fill] (1, 0) circle [radius=0.1];
				\draw [dashed] (0,0) --(1,0);
				\draw [fill] (0.5, 1) circle [radius=0.1];
				\draw [dashed] (0,0) -- (0.5,1);
				\draw [dashed] (1,0) -- (0.5,1);
				\end{tikzpicture}
				\caption{$T_3$}
			\end{subfigure}
			\caption{Triads (a) and (c) are balanced, and (b) and (d) are unbalanced. Solid and dashed edges represent positive and negative edges, respectively.}
			\label{fig:triangles}
	\end{figure}

On the other hand, spectral properties of signed graphs are investigated in literature for several applications. The adjacency matrix, Laplacian matrix and signless Laplacian matrix associated with a signed graph on $n$ vertices are denoted by $A=[a_{ij}]$, $L=D-A$ and $L=D+A$, respectively. Here, $a_{ij}=1$ if the vertices $i$ and $j$ are adjacent through a $+$ edge, $a_{ij}=-1$ if the vertices $i$ and $j$ are adjacent through a $-$ edge, and $a_{ij}=0$ otherwise, $D=\mbox{diag}\{d(1), \hdots, d(n)\}$, $d(j)=d^+(j) + d^-(j)$, where  $d^+(j) \text{ and } d^-(j)$ are the number of positive and negative edges incident to the node $j$, respectively. The net degree of a node $j$ in a signed graph $G$ is given by $d^+(j) - d^-(j)$. A signed graph $G$ is called \textit{net-regular} if each node has the same net degree $d$ \cite{netregular,shahul2015co}. Structural balance of a signed graph is often measured by \textit{algebraic conflict}, which is the smallest signed Laplacian eigenvalue of the graph. Besides, adjacency and Laplacian eigenvalues of a signed graph determine the converging state of diffusion dynamics defined on a signed graph, which represents a multi-agent system \cite{yang2021bipartite}. Two signed graphs are considered co-spectral when their signed characteristic polynomials match. This concept is further categorized as signed L-cospectral and signed Q-cospectral when the characteristic polynomials involved are signed Laplacian and signed signless Laplacian, respectively.


Finding analytical expression of eigenvalues and Laplacian eigenvalues of product graphs such as corona product graphs is one of the fundamental problems in spectral graph theory \cite{sharma2015spectra,mcleman2011spectra}. The technique of writing the characteristic polynomial $\left| A(G\circ H)-\lam I\right|$ and the $L$-polynomial $\left| L(G\circ H)-\lam I\right|$ of corona product graph $G\circ H$ into a product of polynomials associated with $G, H$ is established by considering a new graph invariant known as \textit{coronal} of a graph \cite{mcleman2011spectra}. It is shown that $f_{G\circ H}(\lam) = f_H(\lam)^m \left(f_G(\lam) - \chi_H(\lam)\right),$ where $f_{G\circ H}, f_G, f_H$ denote the characteristic polynomial of $G\circ H, G$ and $H$ respectively, $m$ is the number of vertices of $H$, and $\chi_H$ denotes the coronal of the graph $H$, defined as the sum of all the entries of $(\lam I - A(H))^{-1}.$ Further, coronal is used to factorize characteristic polynomial of generalized corona product of graphs in \cite{laali2016spectra}. An obvious accomplishment of the notion of coronal is that eigenvalues of corona product of two graphs can be obtained in terms of eigenvalues of the constituent graphs which define the corona product. We denote the Laplacian polynomial and the signless Laplacian polynomial of a graph $G$  by $f_{L(G)}(\alpha)$ and $f_{Q(G)}(\beta)$ respectively. 


    In this paper, we have extended the definition of the generalized corona product from unsigned graphs \cite{laali2016spectra} to signed graphs and have explored their structural and spectral properties. We introduced the concepts of signed coronal, Laplacian signed coronal (L-signed coronal), and Signless Laplacian signed coronal (Q-signed coronal) for signed graphs by building upon the concept of 'coronal' introduced by McLeman and McNicholas \cite{mcleman2011spectra} as well as 'Laplacian Coronal' and 'Signless Laplacian coronal' introduced by Laali et al. \cite{laali2016spectra}. Furthermore, we conducted a study on the spectral properties of generalized corona products applied to signed graphs, including the derivation of the signed characteristic polynomial, signed Laplacian polynomial, and signed signless Laplacian polynomial. Additionally, we introduced the concept of 'signed marked graph isomorphism,' enabling us to establish sufficient conditions for the generalized product of distinct collections of signed graphs to exhibit co-spectral behaviour. 

    The rest of the paper is organized as follows. In Section \ref{sec:2}, we introduce generalized corona product of signed graphs and study their structural balance properties. We investigate the spectral properties of these graphs in Section \ref{sec:3}. Then we conclude the paper.

 \section{Generalized corona product of signed graphs and structural balance properties} \label{sec:2}
 
    A signed graph is given by a triple $G^s = (V, E^s, \sigma)$, where $V$ is a finite set of vertices, $E^s\subset V\times V$ is a set of edges, and $\sigma \colon E^s \longrightarrow \{ +,-\}$ is a signature function. Then the canonical marking scheme $\mu: V\rightarrow \{1, -1\}$ on $G^s$ is defined as 

    \begin{equation}\label{Def:markingofanode}\
        \mu(v)=\prod_{e\in E^s} \sigma(e)
    \end{equation}
    where $E^s$ is the set of signed edges adjacent at $v$  \cite{beineke1978consistent,beineke1978consistent2}. Throughout the paper, we consider a signed graph as a marked graph with the canonical marking scheme, and we denote such a graph as a quadruple $G^s_\mu=(V_\mu, E^s,\sigma, \mu).$ Now we recall the definition of corona product of two signed graphs as follows. 

    \begin{definition}\label{def:cpg}\cite{bibhas2019signed}
        Let $G^s_{\mu_1}=(V_{\mu_1},E^s_1,\sigma_1,\mu_1)$  and $H^s_{\mu_2}=(V_{\mu_2},E_2^s,\sigma_2,\mu_2)$ be signed graphs on $n$ and $k$ nodes respectively. Then corona product $G^s_{\mu_1}\circ H^s_{\mu_2}$ of $G^s_{\mu_1}, H^s_{\mu_2}$ is a signed graph by taking one copy of $G^s_{\mu_1}$ and $n$ copies of $H^s_{\mu_2},$ and then forming a signed edge from $i^{th}$ node of $G^s_{\mu_1}$ to every node of the $i^{th}$ copy of $H^s_{\mu_2}$ for all $i.$ The sign of the new edge between $i^{th}$ node of $G^s_{\mu_1},$ say $u$ and $j^{th}$ node in the $i^{th}$ copy of $H^s_{\mu_2},$ say $v$ is given by $\mu_1(u)\mu_2(v)$ where $\mu_1$ and $\mu_2$ are a marking schemes defined by $\sigma_1$, $\sigma_2$ on $G^s_{\mu_1}$ and $H^s_{\mu_2}$ respectively.
     \end{definition}
    Based on the notion of corona of two unsigned graphs, the generalized corona product of two unsigned graphs is introduced in \cite{laali2016spectra}. Now we extend this definition of generalized corona product of unsigned graphs to signed graphs with the help of Definition \ref{def:cpg} as follows. 
    \begin{definition}\label{def:cpg2}
        \begin{sloppypar}
        Let $G^s_{\mu}=(V_{\mu},E^s,\sigma,\mu)$  be a signed graph on $n$ vertices. Let $H^s_{\mu_l}=(V_{\mu_l},E_l^s,\sigma_l,\mu_l)$ be signed graphs  with $|V_{\mu_l}|=k_l$, $1\le l\le n$. Then, the generalized corona product of $G^s_{\mu}$ and $H^s_{\mu_l},$ denoted by $$G^s_{\mu}\circ \overset{n}{\underset{l=1}{ \Lambda}}H^s_{\mu_l},$$ is obtained when the $l$-th vertex of $G^s_{\mu}$ is linked with every node of $H^s_{\mu_l}.$ If $u$ is the $l$-th vertex of $G^s_{\mu}$ and $v$ be any vertex of $H^s_{\mu_l}$ then the sign of the edge joining $u$ and $v$ is given by $\mu(u)\mu_{l}(v).$ 
        \end{sloppypar}
    \end{definition}

    For example, see Figure \ref{fig:triads}. In what follows, we provide the statistics of signed edges and balanced/unbalanced triads in generalized signed corona graphs. We also determine necessary and sufficient conditions for a balanced generalized corona product of signed graphs in terms of patterns of signs of edges adjacent to certain types of marked vertices of the constituent signed graphs. 
    


    Keeping the notations in Definition \ref{def:cpg2}, it is easy to verify that the number of nodes in $G^s_\mu \circ \overset{n}{\underset{l=1}{\Lambda}} H^s_{\mu_l}$ is equal to $|V_\mu|+\sum_{l=1}^{n}|V_{\mu_l}|$ where $|V_G|$ and $|V_{H^s_{\mu_l}}|$ are the number of nodes in $G^s_\mu$ and $H^s_{\mu_l}$, respectively, $l = 1,2,\dots,n.$ Let $M_{G^s_{\mu}}^+$ ($M_{G^s_{\mu}}^-$) and $M_{H^s_{\mu_l}}^{+}$ ($M_{H^s_{\mu_l}}^{-}$) denote the number of positively (negatively) marked nodes in $G^s_\mu$ and $H^s_{\mu_l}$, respectively. Then Table \ref{tb:edges_stat} lists the number of signed edges  in $G^s_\mu \circ \overset{n}{\underset{l=1}{\Lambda}}H^s_{\mu_l}$ that can be derived easily, where  $|E_{l}^{s}|^{\overset{+}{+}}$ ($|E^{s}|^{\overset{+}{+}}$) is the number of edges with sign $s\in\{+, -\}$ that connect positively marked nodes, $|E_{l}^{s}|^{\pm}$ ($|E^{s}|^{\pm}$) is the number of edges with sign $s$ that connect one positively marked node and one negatively marked node, and $|E_{l}^{s}|^{\overset{-}{-}}$ ($|E^{s}|^{\overset{-}{-}}$) is the number of edges with sign $s$ that connect negatively marked nodes in $H^s_{\mu_l}$ ($G^s_\mu$), where $s\in\{+, -\}$.
    	\begin{table}[!]
    		\centering
    			\begin{tabular}{|c|c|c|c|}
    				\hline 
    				Edges & $G^s_\mu$  & $H^s_{\mu_l}$ & $G^s_\mu \circ \overset{n}{\underset{l=1}{\Lambda}}H^s_{\mu_l}$ \\ 
    				\hline \hline
    				\# of edges & $|E^s|$ & $|E^s_{\mu_l}|$ & $|E^s| + \overset{n}{\underset{l=1}{\sum}}|E^s_l|$ + $\overset{n}{\underset{l=1}{\sum}}|V_{\mu_l}|$\\ 
    				\hline 
    				\# of$+$  edges & $|E^{+}|$ & $|E_{\mu_l}^{+}|$ & $|E^{+}| + \overset{n}{\underset{l=1}{\sum}}|E_l^{+}|$ + $M_{G^s_\mu}^+ \overset{n}{\underset{l=1}{\sum}} M_{H^s_{\mu_l}}^{+}$ + $M_{G^s_\mu}^- \overset{n}{\underset{l=1}{\sum}} M_{H^s_{\mu_l}}^{-}$ \\ 
    				\hline 
    				\# of$-$  edges & $|E^{-}|$ & $|E_l^{-}|$ & $|E^{-}| + \overset{n}{\underset{l=1}{\sum}}|E_l^{-}|$ + $M_{G^s_\mu}^+ \overset{n}{\underset{l=1}{\sum}} M_{H^s_{\mu_l}}^{-}$ + $M_{G^s_\mu}^- \overset{n}{\underset{l=1}{\sum}} M_{H^s_{\mu_l}}^{+}$ \\  
    				\hline 		
    			\end{tabular} 
    		\caption{Statistics of number of signed edges  in $G^s_\mu \circ \overset{n}{\underset{l=1}{\Lambda}}H^s_{\mu_l}$.}
    		\label{tb:edges_stat}
    	\end{table}

    Now we consider counting triads of each type in $G^s_\mu \circ \overset{n}{\underset{l=1}{\Lambda}}H^s_{\mu_l}$. Let $|T_i(G^s_{\mu})|$ and $|T_i(H^s_{\mu_l})|$ denote the number of triads, having $i$ number of negative edges, in the graph $G^s_{\mu}$ and $H^s_{\mu_l}$, respectively, for $l=1,2,\dots,n$. Then in table \ref{tb:triads_stat}, we list the number of triads in $G^s_{\mu}\circ \overset{n}{\underset{l=1}{ \Lambda}}H^s_{\mu_l}$ and can be proved easily. Indeed, note that in addition to the triads present in $G^s_{\mu}$ and $H^s_{\mu_l}$, new triads are created due to the definition of corona product. A triad is formed when a node $l$ in $V_\mu$ is linked to an edge $(j,k)$ in $E^s_{{\mu_l}}$ and the type of this triad depends on the marking of nodes $(l,j, \text{ and }k)$ and the sign of the edge $(j,k)$. Thus, $|E_{H^s_{\mu_l}}|$ new triads are created by linking node $l \in V_\mu$ with all the nodes in $V_{\mu_l}$, for $l=1,2,\dots,n$. Hence, the total number of triads in $G^s_{\mu}\circ \overset{n}{\underset{l=1}{ \Lambda}}H^s_{\mu_l}$ is 
    	\begin{equation}
    		T(G^s_{\mu}\circ \overset{n}{\underset{l=1}{ \Lambda}}H^s_{\mu_l}) =
    			|T(G^s_{\mu})| + \overset{n}{\underset{l=1}{\sum}}|T(H^s_{\mu_l})| + \overset{n}{\underset{l=1}{\sum}}|E^s_l|.
    	\end{equation}  
     \begin{sloppypar}
    	\begin{table}[!]
    		\centering                
    		\begin{tabular}{|c|c|c|c|}
    			\hline 
    			triads & $G^s_{\mu}$  & $H^s_{\mu_l}$ & $G^s_{\mu}\circ \overset{n}{\underset{l=1}{ \Lambda}}H^s_{\mu_l}$\\ 
    			\hline \hline
    			\rule[-1ex]{0pt}{2.5ex} \# of $T_0$ & $|T_0(G^s_{\mu})|$ & $|T_0(H^s_{\mu_l})|$ & $|T_0(G^s_{\mu})| +  \overset{n}{\underset{l=1}{\sum}}|T_0(H^s_{\mu_l})| + M_{G^s_{\mu}}^+ \overset{n}{\underset{l=1}{\sum}} |E_{l}^{s+}|^{\overset{+}{+}} + M_{G^s_{\mu}}^- \overset{n}{\underset{l=1}{\sum}} |E_{l}^{s+}|^{\overset{-}{-}}$\\ 
    			\hline 
    			\rule[-1ex]{0pt}{2.5ex}  \# of $T_1$ & $|T_1(G^s_{\mu})|$ & $|T_1(H^s_{\mu_l})|$ & $|T_1(G^s_{\mu})|$ +  $\overset{n}{\underset{l=1}{\sum}}|T_1(H^s_{\mu_l})| + M_{G^s_{\mu}}^+ \overset{n}{\underset{l=1}{\sum}} |E_{H^s_{\mu_l}}^{+}|^{\pm} + M_{G^s_{\mu}}^+ \overset{n}{\underset{l=1}{\sum}} |E_{H^s_{\mu_l}}^{-}|^{\overset{+}{+}}$ \\
    			& & & $+ M_{G^s_{\mu}}^- \overset{n}{\underset{l=1}{\sum}} |E_{H^s_{\mu_l}}^{+}|^{\pm} + M_{G^s_{\mu}}^- \overset{n}{\underset{l=1}{\sum}} |E_{H^s_{\mu_l}}^{-}|^{\overset{-}{-}}$ \\ 
    			\hline 
    			\rule[-1ex]{0pt}{2.5ex}  \# of $T_2$ & $|T_2(G^s_{\mu})|$ & $|T_2(H^s_{\mu_l})|$ & $|T_2(G^s_{\mu})| $ +  $\overset{n}{\underset{l=1}{\sum}}|T_2(H^s_{\mu_l})| + M_{G^s_{\mu}}^+ \overset{n}{\underset{l=1}{\sum}} |E_{H^s_{\mu_l}}^{+}|^{\overset{-}{-}} + M_{G^s_{\mu}}^+ \overset{n}{\underset{l=1}{\sum}} |E_{H^s_{\mu_l}}^{-}|^{\pm} $\\
    			& & & $ + M_{G^s_{\mu}}^- \overset{n}{\underset{l=1}{\sum}} |E_{H^s_{\mu_l}}^{+}|^{\overset{+}{+}} + M_{G^s_{\mu}}^- \overset{n}{\underset{l=1}{\sum}} |E_{H^s_{\mu_l}}^{-}|^{\pm}$ \\ 
    			\hline 
    			\rule[-1ex]{0pt}{2.5ex} \# of $T_3$ & $|T_3(G^s_{\mu})|$ & $|T_3(H^s_{\mu_l})|$ & $|T_3(G^s_{\mu})| +  \overset{n}{\underset{l=1}{\sum}}|T_3(H^s_{\mu_l})| + M_{G^s_{\mu}}^+ \overset{n}{\underset{l=1}{\sum}} |E_{H^s_{\mu_l}}^{-}|^{\overset{-}{-}} + M_{G^s_{\mu}}^- \overset{n}{\underset{l=1}{\sum}} |E_{H^s_{\mu_l}}^{-}|^{\overset{+}{+}} $\\  
    			\hline 
    		\end{tabular}             
    		\caption{Count of triads in $G^s_{\mu}\circ \overset{n}{\underset{l=1}{ \Lambda}}H^s_{\mu_l}$.}
    		\label{tb:triads_stat}
    	\end{table}
     \end{sloppypar}

    Towards investigating the structural balance properties of the generalized corona product of signed graphs, first, note that $G^s_{\mu}\circ \overset{n}{\underset{l=1}{ \Lambda}}H^s_{\mu_l}$, $l=1,2,\dots,n$ is unbalanced if any one graph among $G^s_{\mu}, H^s_{\mu_1}, H^s_{\mu_2},\dots, 
    H^s_{\mu_l}$ is unbalanced. Thus in \cite{bibhas2019signed},  conditions are derived under which corona product of balanced signed graphs becomes unbalanced. We extend this result for  $G^s_{\mu}\circ \overset{n}{\underset{l=1}{ \Lambda}}H^s_{\mu_l}$, $l=1,2,\dots,n$ to be unbalanced when all the constituent signed graphs are balanced.

	\begin{theorem}
		Let $G^s_\mu=(V_{\mu},E^s,\sigma,\mu)$ and $H^s_{\mu_l}=(V_{\mu_l},E_l^s,\sigma_l,\mu_l)$ be structurally balance signed graphs, $l=1,2,\dots,n$. Then  $G^s_{\mu}\circ \overset{n}{\underset{l=1}{ \Lambda}}H^s_{\mu_l}$ is unbalanced if there exists at least one $H^s_{\mu_j}$, $1 \le j \le n$, having one of the following types of edges
		\begin{enumerate}
			\item a positive edge that connects two opposite marked nodes,
			\item a negative edge that connects two negatively marked nodes, and
			\item a negative edge that connects two positively marked nodes.
		\end{enumerate}
	\end{theorem}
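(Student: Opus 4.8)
The plan is to show that each of the three edge conditions forces a single newly-created \emph{unbalanced triad} in the product graph; since a signed graph fails to be balanced as soon as it contains one unbalanced cycle, and a triad is precisely a cycle on three vertices, producing such a triad is enough, irrespective of the (assumed) balance of all the constituents.

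First I would fix the copy $H^s_{\mu_j}$ that contains the prescribed edge, say the edge $(p,q)\in E^s_j$, and let $u$ denote the $j$-th vertex of $G^s_\mu$ to which this entire copy is attached. By Definition \ref{def:cpg2}, $u$ is joined to \emph{every} vertex of $H^s_{\mu_j}$; in particular the edges $\{u,p\}$ and $\{u,q\}$ are present, so the three vertices $u,p,q$ span a triangle, i.e.\ a triad, in $G^s_\mu\circ\overset{n}{\underset{l=1}{\Lambda}}H^s_{\mu_l}$. By construction the two new edges carry signs $\mu(u)\mu_j(p)$ and $\mu(u)\mu_j(q)$, while the edge $(p,q)$ retains its original sign $\sigma_j(p,q)$.

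The key step is to compute the product of the three edge signs of this triad, which governs its balance (a triad is balanced exactly when the product equals $+$, equivalently when it has an even number of negative edges). This product is
\[
\sigma_j(p,q)\cdot\big(\mu(u)\mu_j(p)\big)\cdot\big(\mu(u)\mu_j(q)\big)
=\mu(u)^2\,\sigma_j(p,q)\,\mu_j(p)\mu_j(q)
=\sigma_j(p,q)\,\mu_j(p)\mu_j(q),
\]
where the last equality uses $\mu(u)^2=1$ (since $\mu(u)\in\{+1,-1\}$). Thus the marking of $u$ drops out, and the new triad is unbalanced precisely when $\sigma_j(p,q)\neq\mu_j(p)\mu_j(q)$. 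It then suffices to test the three listed cases: in case (1) we have $\sigma_j(p,q)=+$ but $\mu_j(p)\mu_j(q)=(+)(-)=-$; in cases (2) and (3) we have $\sigma_j(p,q)=-$ but $\mu_j(p)\mu_j(q)=(-)(-)=+$ and $(+)(+)=+$, respectively. In each case the product is $-$, so the triad on $\{u,p,q\}$ is unbalanced, and the whole product graph is unbalanced.

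The verification is routine and I do not anticipate a genuine obstacle; the crux is simply the reduction $\mu(u)^2=1$, which is what makes the balance of every corona-generated triad depend only on the sign $\sigma_j(p,q)$ of the $H$-edge and the canonical markings $\mu_j(p),\mu_j(q)$ of its endpoints, and not on which vertex of $G^s_\mu$ the copy hangs from. I would state at the outset that balance is a property of all cycles so that exhibiting one unbalanced triad closes the argument without any further global cycle analysis.
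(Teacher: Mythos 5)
Your proof is correct and follows essentially the same route as the paper: both arguments exhibit the new triad on $\{u,p,q\}$ created by joining the apex vertex $u$ of $G^s_\mu$ to the endpoints of the offending edge of $H^s_{\mu_j}$, and show that this triad is unbalanced, which suffices since balance requires every cycle to be balanced. The only difference is cosmetic: the paper splits into cases according to the marking of $u$ (identifying the resulting triad as $T_1$ or $T_3$), whereas your sign-product computation with $\mu(u)^2=1$ collapses that case analysis into a single calculation.
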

	\begin{proof}
	      The proof is similar to Theorem 2.2 in                \cite{bibhas2019signed}. A positively marked node in $G^s_{\mu}$ will form a triad $T_1$ in $G^s_{\mu}\circ \overset{2}{\underset{l=1}{ \Lambda}}H^s_{\mu_l}$ when an edge of type $1$ and $3$ is present in $H^s_{\mu_j}$, otherwise it forms a triad $T_3$ when an edge of type $2$ is present in $H^s_{\mu_j}$.  Similarly, a negatively marked node in $G^s_{\mu}$ will form a triad $T_1$ in $G^s_{\mu}\circ \overset{2}{\underset{l=1}{ \Lambda}}H^s_{\mu_l}$ when an edge of type $1$ and $2$ is present in $H^s_{\mu_j}$, otherwise it forms a triad $T_3$ when an edge of type $3$ is present in $H^s_{\mu_j}$. 
	\end{proof}
	
    In Figure \ref{fig:coronaproduct}, we give an example of an unbalanced generalized corona product graph of structurally balanced signed graphs  $G^s_{\mu}, H^s_{\mu_1}, \text{ and } H^s_{\mu_2}$.
    	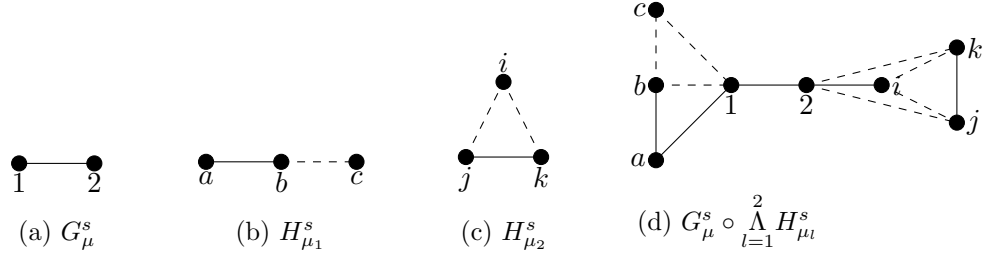
\begin{figure}
    		\centering	
    		\begin{subfigure}[b]{0.2\textwidth}
    			\centering
    			\begin{tikzpicture}
    			\draw [fill] (0, 0) circle [radius=0.1];
    			\draw [fill] (1, 0) circle [radius=0.1];
    			\draw (0,0) --(1,0);	
    			\node [below] at (0,0) {$1$};
    			\node [below] at (1,0) {$2$};		
    			\end{tikzpicture}
    			\caption{$G^s_\mu$}
    		\end{subfigure}
    		\begin{subfigure}[b]{0.2\textwidth}
    			\centering
    			\begin{tikzpicture}
    			\draw [fill] (0, 0) circle [radius=0.1];
    			\draw [fill] (1, 0) circle [radius=0.1];			
    			\draw [fill] (2,0) circle [radius=0.1];
    			\draw (0,0) --(1,0);
    			\draw [dashed] (1,0) -- (2,0);
    			\node [below] at (0,0) {$a$};
    			\node [below] at (1,0) {$b$};
    			\node [below] at (2,0) {$c$};
    			\end{tikzpicture}
    			\caption{$H^s_{\mu_1}$}
    		\end{subfigure}						
    		\begin{subfigure}[b]{0.20\textwidth}
    			\centering
    			\begin{tikzpicture}
    			\draw [fill] (0, 0) circle [radius=0.1];
    			\draw [fill] (1, 0) circle [radius=0.1];
    			\draw [fill] (0.5, 1) circle [radius=0.1];
    			\draw (0,0) --(1,0);			
    			\draw [dashed] (0,0) -- (0.5,1);
    			\draw [dashed] (1,0) -- (0.5,1);
    			\node [below] at (0,0) {$j$};
    			\node [below] at (1,0) {$k$};
    			\node [above] at (0.5,1) {$i$};
    			\end{tikzpicture}
    			\caption{$H^s_{\mu_2}$}
    		\end{subfigure}
    		\begin{subfigure}[b]{0.20\textwidth}
    			\centering
    			\begin{tikzpicture}
    			\draw [fill] (0,0) circle [radius=0.1];
    			\draw [fill] (0,1) circle [radius=0.1];
    			\draw [fill] (0,2) circle [radius=0.1];
    			\draw [fill] (1,1) circle [radius=0.1];
    			\draw [fill] (2,1) circle [radius=0.1];
    			\draw [fill] (3,1) circle [radius=0.1];
    			\draw [fill] (4,0.5) circle [radius=0.1];
    			\draw [fill] (4,1.5) circle [radius=0.1];
    			
    			\node [left] at (0,0) {$a$};
    			\node [left] at (0,1) {$b$};
    			\node [left] at (0,2) {$c$};
    			\node [below] at (1,1) {$1$};
    			\node [below] at (2,1) {$2$};
    			\node [right] at (3,1) {$i$};
    			\node [right] at (4,0.5) {$j$};
    			\node [right] at (4,1.5) {$k$};
    			
    			\draw (0,0) --(0,1);			
    			\draw [dashed] (0,1) -- (0,2);
    			\draw (1,1) -- (2,1);
    			\draw [dashed] (3,1) -- (4,0.5);
    			\draw [dashed] (3,1) -- (4,1.5);
    			\draw (4,1.5) -- (4,0.5);
    			
    			\draw (0,0) --(1,1);
    			\draw [dashed] (0,1) -- (1,1);
    			\draw [dashed] (0,2) -- (1,1);			
    			\draw (2,1) -- (3,1);
    			\draw [dashed] (2,1) -- (4,0.5);
    			\draw [dashed] (2,1) -- (4,1.5);
    			\end{tikzpicture}
    			\caption{$G^s_{\mu}\circ \overset{2}{\underset{l=1}{ \Lambda}}H^s_{\mu_l}$}
    			\label{fig:coronaproduct}
    		\end{subfigure}
    		\caption{The generalized corona product $G^s_{\mu}\circ \overset{2}{\underset{l=1}{ \Lambda}}H^s_{\mu_l}$ is shown in (d) using canonical marking scheme.}
    		\label{fig:triads}
    	\end{figure}
\section{Spectral properties of generalized corona product of graphs} \label{sec:3}

    In this section we define the coronal of a signed graph by generalizing the notion of coronal of an unsigned graph, which is a graph invariant introduced in \cite{mcleman2011spectra}. Further, we determine the characteristic polynomials for the adjacency matrix, Laplacian matrix, and Signless Laplacian matrix, we named Characteristic polynomials, signed Laplacian polynomials, and signed signless Laplacian polynomials respectively corresponding to the generalized corona product of signed graphs in terms of certain polynomials corresponding to the constituent signed graphs which define the product. Additionally, we present criteria for generalised corona product signed graphs to exhibit cospectral properties, including $L$-cospectrality and $Q$-cospectrality.

   
    
    \begin{definition} \cite{mcleman2011spectra}
        Assume that H is a graph on n vertices with the adjacency matrix B, when considered as a matrix over the field of rational function $\C(\lambda)$, the characteristic matrix $\lambda I - B$ has determinant  $\det(\lambda I - B) = f_H(\lambda) \neq 0$, indicating it is inevitable. The coronal $\chi_H(\lambda) \in \C(\lambda)$ of $H$ is defined to be the sum of the entries of the matrix $(\lambda I - B)^{-1}$. Therefore, this can be obtained as $$\chi_H(\lambda)=\textbf{1}^T(\lambda I -B)^{-1}\textbf{1}$$ where $\textbf{1}$ denotes the all-one vector of compatible dimension, that is $n$.
     \end{definition}
    

   Now we introduce the notion of isomorphism for signed marked graphs. Let $G^s_\mu=(V_\mu, E^s, \sigma)$ be a signed marked graph with the marking scheme $\mu: V_\mu \rightarrow \{-1, +1\}$, and the signature function $\sigma: E^s \rightarrow \{-1, +1\}.$ As mentioned before, the marking scheme $\mu$ is considered the canonical marking scheme for the signed graph $G^s_{\mu}=(V, E^s, \sigma)$.     
    \begin{definition} 
        Given two signed marked graphs $G^s_{\mu_1}=(V_{\mu_1}, E^s_1, \sigma_1)$ and $G^s_{\mu_2}=(V_{\mu_2}, E^s_2, \sigma_2)$, we call a bijective map $\phi : V_{\mu_1} \rightarrow V_{\mu_2}$ isomorphism if 
        \begin{enumerate}
            \item $(i, j)\in E^s_1$ if and only if $(\phi(i), \phi(j)) \in E^s_2,$ and $\sigma_1((i,j))=\sigma_2(\phi(i),\phi(j))$ for all $(i,j)\in E^s_1$            
             \item $\mu_1[V_{\mu_1}] =\mu_2[\phi(V_{\mu_1})],$ where 
                \beano 
                \mu_1[V_{\mu_1}] &=& [\mu_1(v_1), \mu_1(v_2), \, \hdots, \mu_1(v_k)]^T, V_{\mu_1}=\{v_1, \hdots, v_k\} \,\, \mbox{and} \\
                \mu_2[\phi(V_{\mu_1})] &=& [\mu_2
                (\phi(v_1)), \mu_2(\phi(v_2)), \, \hdots, \mu_2(\phi(v_k))]^T.
                \eeano 
        \end{enumerate}
    \end{definition}    
    Obviously, the isomorphic signed graphs $G_{\mu_j}^s=(V_{\mu_j}, E^s,\sigma_j),$ $j=1,2$ have the same characteristic polynomials. Indeed, if $\phi$ denotes the isomorphism then  $A( G^s_{\mu_1})=P_\phi A(G^s_{\mu_2}) P_\phi^T$ and $\mu[V_{\mu_1}]=P_\phi \mu[V_{\mu_2}],$ where $A(G_{\mu_j}^s)$ denotes the adjacency matrix of $G_{\mu_j}^s$ and $P_\phi$ is the permutation matrix corresponding to the bijective function $\phi.$ 
    
    Let $G^s_\mu=(V_{\mu},E^s,\sigma)$  be a signed graph with vertex set $V_\mu=\{v_1, v_2,\dots, v_n\}$ and adjacency matrix $A(G^s_\mu)$. Suppose for $l=1,2,\dots,n$, $H^s_{\mu_l}=(V_{\mu_l},E_l^s,\sigma_l)$ be a signed graph with $|V_{\mu_l}|=t_l$, adjacency matrix $B_l$ and vertex set  $V{\mu_l}=\{ n+\sum_{k=1}^{l-1}{t_k}+1, n+\sum_{k=1}^{l-1}{t_k}+2,\dots,n+\sum_{k=1}^l{t_k}\}$. The adjacency matrix of $G^s_{\mu}\circ \overset{n}{\underset{l=1}{ \Lambda}}H^s_{\mu_l}$ is presented with this labeling as follows:\\    	
    $$ A\left(G^s_{\mu}\circ \overset{n}{\underset{l=1}{ \Lambda}}H^s_{\mu_l}\right)= \left[ \begin{matrix}
			 A(G^s_{\mu}) & PQ \\ \\
			(PQ)^T & D
			\end{matrix}
			\right]$$
		where \begin{equation}\label{eqn:pq} P = \left[ \begin{matrix}
		\mu(v_1) & 0 & 0& \dots & 0 \\
		0& \mu(v_2) & 0 & \dots & 0 \\
		\vdots & \vdots & \ddots & \vdots \\
    	0  & 0 & 0 & \dots & \mu(v_n)
    		\end{matrix} \right], \,\, 
    Q = \left[ \begin{matrix}
		\mu_1[V_{\mu_1}]^T & 0 & 0& \dots & 0 \\
		0& \mu_2[V_{\mu_2}]^T & 0 & \dots & 0 \\
		\vdots & \vdots & \ddots & \vdots \\
    	0  & 0 & 0 & \dots & \mu_n[V_{\mu_n}]^T
    		\end{matrix} \right],
    	\end{equation}
    	and 
    	\begin{equation}\label{eqn:d} D=
    \begin{blockarray}{cccccc}
    &H_1 & H_2 & H_3 & \dots & H_n \\
    \begin{block}{c[ccccc]}
      H_1 & B_1 & 0 & 0 & \dots & 0 \\
      H_2 & 0 & B_2 & 0 & \dots & 0 \\
      \vdots & \vdots & \vdots & \ddots & \vdots& \vdots \\
      H_{n-1} & 0 & 0 & \dots & B_{n-1} & 0 \\
      H_n & 0 & 0 & \dots & 0 & B_n \\
    \end{block}
    \end{blockarray}
    \end{equation}
    Consequently 
    $ P\in\R^{n \times n}$, $Q\in \R^{n \times (\sum_{l=1}^nt_l)}$ and  $D \in R^{(\sum_{l=1}^nt_l) \times (\sum_{l=1}^nt_l)}$.	
    Now we recall the following, which will be used in the sequel.
    
    \begin{lemma}(Schur Complement). (See \cite{horn2012matrix}.)\label{schur_complement}
            Suppose $A$ is a $n\times n$ matrix with partitioned form $\left[ \begin{matrix}
    			A_{11} & A_{12} \\
    			A_{21} & A_{22}
    			\end{matrix}
    			\right]$
    			where $A_{11}$ and $A_{22}$ are inevitable matrices. Then 
    	\begin{equation*} \label{eq1}
                \begin{split}
                \det\left[ \begin{matrix}
                			A_{11} & A_{12} \\
                			A_{21} & A_{22}
                			\end{matrix}
                			\right]=&\det(A_{22}) \, \det(A_{11}-A_{12}{A_{22}}^{-1}A_{21}) \\
                 =&\det(A_{11}) \, \det(A_{22}-A_{21}{A_{11}}^{-1}A_{12}). 
                \end{split}
            \end{equation*}	
    \end{lemma}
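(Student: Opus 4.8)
The plan is to prove the identity by exhibiting two explicit block-triangular factorizations of the partitioned matrix, and then to invoke the two elementary facts that the determinant of a block-triangular matrix is the product of the determinants of its diagonal blocks, and that a block-triangular matrix with identity diagonal blocks has determinant $1$. This converts the whole statement into a pair of routine block-multiplication checks.

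First I would establish the first equality using the invertibility of $A_{22}$. The idea is to eliminate the $(1,2)$ block by right-multiplying the second block row into the first, which I encode as the factorization
$$\left[ \begin{matrix} A_{11} & A_{12} \\ A_{21} & A_{22} \end{matrix} \right] = \left[ \begin{matrix} I & A_{12}A_{22}^{-1} \\ 0 & I \end{matrix} \right] \left[ \begin{matrix} A_{11} - A_{12}A_{22}^{-1}A_{21} & 0 \\ A_{21} & A_{22} \end{matrix} \right].$$
A direct block multiplication confirms this: the cross terms $A_{12}A_{22}^{-1}A_{21}$ cancel in the $(1,1)$ block and $A_{12}A_{22}^{-1}A_{22}=A_{12}$ reproduces the $(1,2)$ block. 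The first factor is block upper-triangular with identity diagonal blocks, hence has determinant $1$; the second factor is block lower-triangular, hence has determinant $\det(A_{11}-A_{12}A_{22}^{-1}A_{21})\,\det(A_{22})$. Multiplicativity of the determinant then gives the first equality.

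For the second equality I would instead use the invertibility of $A_{11}$ and the mirror-image factorization that eliminates the $(2,1)$ block,
$$\left[ \begin{matrix} A_{11} & A_{12} \\ A_{21} & A_{22} \end{matrix} \right] = \left[ \begin{matrix} I & 0 \\ A_{21}A_{11}^{-1} & I \end{matrix} \right] \left[ \begin{matrix} A_{11} & A_{12} \\ 0 & A_{22} - A_{21}A_{11}^{-1}A_{12} \end{matrix} \right],$$
verify it by the analogous block multiplication, and argue exactly as before: the first factor has determinant $1$ and the second (now block upper-triangular) has determinant $\det(A_{11})\,\det(A_{22}-A_{21}A_{11}^{-1}A_{12})$.

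Since this is the classical Schur-complement determinant identity (as recorded in \cite{horn2012matrix}), I do not expect any genuine obstacle; the only thing requiring care is bookkeeping. Specifically, the main point to get right is the placement of the two Schur complements on the correct diagonal blocks and the direction of the multipliers $A_{12}A_{22}^{-1}$ versus $A_{21}A_{11}^{-1}$, so that the off-diagonal contributions cancel precisely. I would also note in passing that combining the two resulting expressions immediately yields the familiar relation $\det(A_{22})\det(A_{11}-A_{12}A_{22}^{-1}A_{21})=\det(A_{11})\det(A_{22}-A_{21}A_{11}^{-1}A_{12})$, which is the form actually needed when later factoring the characteristic, Laplacian, and signless Laplacian polynomials of $G^s_{\mu}\circ \overset{n}{\underset{l=1}{ \Lambda}}H^s_{\mu_l}$.
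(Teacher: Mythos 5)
Your proof is correct: both block factorizations check out by direct multiplication, and combining them with the multiplicativity of the determinant and the fact that a block-triangular matrix has determinant equal to the product of the determinants of its diagonal blocks yields exactly the stated identity. Note, however, that the paper itself offers no proof of this lemma at all---it is quoted verbatim as a known result from \cite{horn2012matrix}---so there is nothing internal to compare against; your argument is precisely the standard textbook proof that the cited reference supplies, and it would serve as a self-contained replacement for the citation if one were wanted. (Incidentally, the word ``inevitable'' in the paper's statement is a typo for ``invertible,'' which is the hypothesis your proof correctly uses.)
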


    For a signed graph $H^s_{\mu } = (V_{\mu},E^s,\sigma)$, we define signed coronal and $L$-signed coronal to be the scalars 
    $${\chi}_{H^s_{\mu}}(\lambda)  =  \mu \left[ V_\mu \right] ^T(\lambda I- A({H^s_{\mu}}))^{-1}\mu \left[V_\mu\right]$$ 
    and 
    $${\chi}_{L({H^s_\mu})}(\alpha)  = \mu \left[ V_\mu\right]^T((\alpha-1)I-L(H^s_{\mu}))^{-1} \mu \left[ V_\mu\right],$$ respectively. Similarly, we define $Q$-signed coronal of the signed  graph $H^s_{\mu}$ as the scalar $${\chi}_{Q(H^s_{\mu})}(\beta) =  \mu[V_\mu]^T((\beta-1)I-Q(H^s_{\mu}))^{-1} \mu[V_\mu].$$
        
    In the following proposition, we show that the coronal of a marked signed graph is preserved under isomorphism.
    \begin{proposition} {\label{lm:isomorphic_equalcoronal}}
        If $H^s_{\mu_1}=(V_{\mu_1}, E^s, {\sigma_1})$ and $Z^s_{\mu_2}=(V_{\mu_2}, E^s, {\sigma_2})$ are two isomorphic signed marked graphs then $$\chi_{H^s_{\mu_1}}(\lambda)=\chi_{H^s_{\mu_2}}(\lambda).$$\end{proposition}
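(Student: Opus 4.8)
The plan is to reduce the claim entirely to the two conjugation identities recorded immediately before the proposition. Since $\phi$ is an isomorphism of signed marked graphs, its associated permutation matrix $P_\phi$ satisfies
\[
A(H^s_{\mu_1}) = P_\phi \, A(H^s_{\mu_2}) \, P_\phi^T
\qquad\text{and}\qquad
\mu_1[V_{\mu_1}] = P_\phi \, \mu_2[V_{\mu_2}].
\]
First I would verify that both of these follow directly from the definition of signed marked graph isomorphism: condition (1), which matches edges together with their signs, forces the adjacency matrices to be permutation-conjugate, while condition (2), which equates the two marking vectors under the relabeling induced by $\phi$, delivers the second relation. These are exactly the observations already noted in the excerpt.

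Next, working over the field of rational functions $\C(\lambda)$, where the characteristic matrix $\lambda I - A$ is invertible, I would use the orthogonality $P_\phi P_\phi^T = P_\phi^T P_\phi = I$ of the permutation matrix to write
\[
\lambda I - A(H^s_{\mu_1}) = P_\phi \bigl(\lambda I - A(H^s_{\mu_2})\bigr) P_\phi^T,
\]
so that
\[
\bigl(\lambda I - A(H^s_{\mu_1})\bigr)^{-1} = P_\phi \bigl(\lambda I - A(H^s_{\mu_2})\bigr)^{-1} P_\phi^T.
\]
Substituting this inverse together with $\mu_1[V_{\mu_1}] = P_\phi \mu_2[V_{\mu_2}]$ into the definition $\chi_{H^s_{\mu_1}}(\lambda) = \mu_1[V_{\mu_1}]^T(\lambda I - A(H^s_{\mu_1}))^{-1}\mu_1[V_{\mu_1}]$, every pair $P_\phi^T P_\phi$ that appears at a junction collapses to the identity, telescoping the expression down to $\mu_2[V_{\mu_2}]^T(\lambda I - A(H^s_{\mu_2}))^{-1}\mu_2[V_{\mu_2}] = \chi_{H^s_{\mu_2}}(\lambda)$, which is the desired equality.

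The computation is essentially routine linear algebra, so I do not expect a substantive obstacle. The only points requiring genuine care are, first, to justify that the two conjugation identities really do follow from the stated definition of signed marked graph isomorphism (rather than assuming them), and second, to carry out the matrix inversion over $\C(\lambda)$ instead of over $\C$, so that invertibility of $\lambda I - A$ is guaranteed generically in $\lambda$ and the scalar identity then holds as an equality of rational functions. Beyond this bookkeeping the argument is immediate.
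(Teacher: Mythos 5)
Your proposal is correct and follows essentially the same route as the paper's own proof: both use the permutation-conjugation identities $A(H^s_{\mu_1}) = P_\phi A(H^s_{\mu_2}) P_\phi^T$ and $\mu_1[V_{\mu_1}] = P_\phi \mu_2[V_{\mu_2}]$, pull the conjugation through the inverse of the characteristic matrix over $\C(\lambda)$, and cancel $P_\phi^T P_\phi = I$ to collapse one coronal into the other. The only difference is notational (you conjugate in the opposite direction, which is immaterial since isomorphism is symmetric), and your explicit remarks about working over $\C(\lambda)$ and deriving the conjugation identities from the definition are sound points of care that the paper leaves implicit.
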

        \begin{proof} 
            Since $H^s_{\mu_1}$ and $Z^s_{\mu_2}$ are isomorphic, therefore there exist a permutation matrix $P_{\phi}$ such that $$A({Z^s_{\mu_2}})=P_{\phi}A({H^s_{\mu_1}})P_{\phi}^T \quad \mbox{and} \quad \mu_{2}[V_{\mu_2}]=P_\phi \mu_{1}[V_{\mu_1}].$$    
            Now, 
            \begin{equation*}\label{eq3}
                \begin{split}
                    \chi_{Z^s_{\mu_2}}(\lambda) &= \mu_2\left[ V_{\mu_2}\right]^T\left( \lambda I-A({Z^s_{\mu_2}})\right) ^{-1}\mu_2 \left[V_{\mu_2}\right] \\
                 &= \mu_1\left[V_{\mu_1}\right]^T P_{\phi}^T\left( \lambda I-P_{\phi}A({H^s_{\mu_1})P_{\phi}^T}\right) ^{-1}P_\phi\mu_1 \left[V_{\mu_1}\right] \\
                 &=\mu_1\left[V_{\mu_1}\right]^T P_{\phi}^T P_{\phi}^T\left( \lambda I-A({H^s_{\mu_1})}\right) ^{-1}P_\phi P_\phi\mu_1 \left[V_{\mu_1}\right]\\
                 &= \mu_1\left[V_{\mu_1}\right]^T\left( \lambda I-A({H^s_{\mu_1}})\right) ^{-1}\mu_1 \left[V_{\mu_1}\right]\\
                 &= \chi_{H^s_{\mu_1}}(\lambda). \qedhere
                \end{split}
            \end{equation*}
            \end{proof}     
    A signed graph $H^s_{\mu_{2}}=(V_{\mu_2},E_2^s,\sigma_2)$ is called $k$-net regular if $d^+(v) - d^-(v) = k$, for each $v \in V_{{\mu}_{2}}$, where $d^+(v)$ and $d^-(v)$ are the positive and negative degree of node $v$, respectively \cite{zaslavsky2010matrices}. \\ \\
    In the subsequent propositions, we establish explicit formulas to determine the signed coronal, L-signed coronal, and Q-signed coronal, focusing on particular signed graph instances.
    
    \begin{proposition}
       {\label{formula_signedcoronal}}
        Let $H^s_{\mu}$ be a $k$-net regular signed graph of order $n$. Then $$\chi_{H^s_{\mu}}(\lambda)=\frac{n}{\lambda-k}.$$
    \end{proposition}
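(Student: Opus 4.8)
The plan is to read the defining quadratic form of the signed coronal as a resolvent applied to the marking vector and to collapse it by exhibiting that vector as an eigenvector of the adjacency matrix. Write $w=\mu[V_\mu]\in\{\pm1\}^n$ for the marking vector, so that by definition $\chi_{H^s_\mu}(\lambda)=w^{\T}(\lambda I-A(H^s_\mu))^{-1}w$. The goal is to establish the single identity
\[
A(H^s_\mu)\,w=k\,w .
\]
Granting it, one has $(\lambda I-A(H^s_\mu))w=(\lambda-k)w$, hence $(\lambda I-A(H^s_\mu))^{-1}w=(\lambda-k)^{-1}w$ as an identity over $\C(\lambda)$ (legitimate since $\det(\lambda I-A(H^s_\mu))\not\equiv0$), and therefore
\[
\chi_{H^s_\mu}(\lambda)=(\lambda-k)^{-1}\,w^{\T}w=\frac{n}{\lambda-k},
\]
where $w^{\T}w=\sum_{v\in V_\mu}\mu(v)^2=n$ because every entry of $w$ equals $\pm1$. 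Thus the entire proposition reduces to the eigenvalue relation above.

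The core of the argument, and the step I expect to be the main obstacle, is verifying $A(H^s_\mu)\,w=k\,w$ from $k$-net-regularity. I would compute the $i$-th coordinate $(Aw)_i=\sum_{j}a_{ij}\mu(v_j)$, partition the sum over the positive and negative neighbours of $v_i$, and use that $a_{ij}=+1$ on positive edges and $a_{ij}=-1$ on negative edges in combination with the net-regularity hypothesis $d^+(v_i)-d^-(v_i)=k$. The delicate bookkeeping is that the edge signs, together with the canonical markings $\mu(v_j)=\prod_{e\ni v_j}\sigma(e)$ of the neighbouring vertices, must reassemble into exactly $k\,\mu(v_i)$; this is precisely where both the net-regular hypothesis and the marking convention are consumed, and where sign-tracking has to be carried out with care before the coordinate identity can be concluded for every $i$.

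Accordingly I would organise the write-up in three steps: (i) reduce the coronal to the resolvent form $w^{\T}(\lambda I-A(H^s_\mu))^{-1}w$; (ii) prove the eigenvalue identity $A(H^s_\mu)\,w=k\,w$ as a short coordinate computation driven by net-regularity; and (iii) substitute and simplify using $w^{\T}w=n$. As a consistency check I would specialise to the unsigned case: a $k$-net-regular graph whose signature is everywhere positive is simply a $k$-regular graph with $w=\mathbf{1}$, and the formula $n/(\lambda-k)$ then recovers the classical coronal of a regular graph from \cite{mcleman2011spectra}, which is reassuring evidence that the resolvent collapse in step (i)--(iii) is set up correctly.
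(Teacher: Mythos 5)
Your steps (i) and (iii) are exactly the paper's: the paper too collapses the resolvent on the marking vector $w=\mu[V_\mu]$ via the eigenvector identity $A(H^s_\mu)w=kw$ and finishes with $w^{T}w=n$. The entire content of the proposition therefore sits in your step (ii), which you explicitly defer, and there is a genuine gap there: the coordinate computation you sketch cannot produce that identity. What $k$-net-regularity gives is a statement about \emph{unweighted} row sums, $\sum_{j}a_{ij}=d^{+}(v_i)-d^{-}(v_i)=k$, i.e.\ $A(H^s_\mu)\mathbf{1}=k\mathbf{1}$. What you need is the $\mu$-weighted statement $\sum_j a_{ij}\mu(v_j)=k\,\mu(v_i)$, and the two coincide only when the marking vector is constant. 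The paper's proof consists precisely of asserting that constancy: it claims that $k$-net-regularity forces $\mu[V_\mu]=\mathbf{1}_n$ or $-\mathbf{1}_n$, after which $Aw=kw$ is just the row-sum identity. That claim, not sign bookkeeping, is the ingredient missing from your outline.

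Moreover, no amount of bookkeeping can rescue the step as you describe it, because from net-regularity alone the identity is false. The canonical marking is $\mu(v)=(-1)^{d^{-}(v)}$, and net-regularity fixes the difference $d^{+}(v)-d^{-}(v)$, not the parity of $d^{-}(v)$, so vertices of different degrees may carry different marks. Take the bowtie: triangles on $\{1,2,3\}$ and $\{3,4,5\}$ glued at vertex $3$, with $\sigma(12)=\sigma(34)=\sigma(35)=+$ and $\sigma(13)=\sigma(23)=\sigma(45)=-$. Every vertex satisfies $d^{+}=d^{-}$, so this graph is $0$-net-regular, yet $w=(-1,-1,+1,-1,-1)^{T}$ and $(Aw)_1=a_{12}w_2+a_{13}w_3=-2\neq 0=k\,w_1$; in fact $w^{T}A^{2}w=\|Aw\|^{2}=16\neq 0$, so $\chi_{H^s_\mu}(\lambda)=5/\lambda+16/\lambda^{3}+\cdots\neq 5/\lambda$, and the stated formula itself fails for this graph. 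The conclusion genuinely requires the marking vector to be constant---which does hold, for instance, when the underlying graph is regular (the co-regular setting of Propositions \ref{lm:Laplacian_coregular} and \ref{proposition 3.5}, where all $d^{-}(v)$ coincide)---so a correct write-up must either prove that constancy under an adequate hypothesis or assume it outright, as the paper in effect does. Your plan, which treats it as routine sign-tracking deducible from net-regularity, would not close.
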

    \begin{proof} 
        Since $H^s_{\mu}$ is a $k$-net regular signed graph, we have $\mu[V_\mu] = \textbf{1}_n \, \text{or} \, -\textbf{1}_n $   so in either case $A({H^s_{\mu }})\mu [V_\mu] = k \mu [V_\mu ]$. This implies that $(\lambda I-A({H^s_{\mu}})) \mu[V_\mu]=(\lambda-k) \mu [V_\mu]$. Therefore
      \[
           \chi_{H^s_{\mu}}(\lambda) =\mu [V_\mu]^T\left( \lambda I-A({H^s_{\mu}})\right) ^{-1}\mu [V_\mu] 
       =\frac{\mu [V_\mu ]^T \mu [V_\mu ]}{\lambda-k}=\frac{n}{\lambda-k} \qedhere
       \]
    \end{proof}
    A signed graph is defined as co-regular if it is net-regular with net-degree $k$ for some integer $k$ and its underlying graph is $r$-regular for some positive integer $r$. In this case, we define the ordered pair $(r, k)$ as the co-regularity pair \cite{hameed2012on}.    
    \begin{proposition}{\label{lm:Laplacian_coregular}}
            Let $H^s_{\mu}$ be a co-regular signed graph of co-regularity pair $(r,k)$. Then
            $${\chi}_{L({H^s_\mu})}(\alpha) = \frac{n}{\alpha-1-2d^{-}},$$
            where $d^{-}$ is the negative degree of any vertex of $H^s_{\mu}$.
    \end{proposition}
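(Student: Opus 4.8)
The plan is to follow the same strategy as in the proof of Proposition \ref{formula_signedcoronal}: I would show that the marking vector $\mu[V_\mu]$ is an eigenvector of the signed Laplacian $L(H^s_\mu)$, and then read off the $L$-signed coronal as a scalar multiple of $\mu[V_\mu]^T\mu[V_\mu]$.

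First I would unpack the co-regularity hypothesis. Since the underlying graph is $r$-regular, $d^+(v)+d^-(v)=r$ for every vertex $v$, and net-regularity gives $d^+(v)-d^-(v)=k$. Solving these two relations yields $d^-(v)=(r-k)/2$, a constant independent of $v$; write $d^-$ for this common value. Two consequences follow: the degree matrix is $D=rI$, and the canonical marking $\mu(v)=(-1)^{d^-(v)}$ is constant over all vertices, so $\mu[V_\mu]=\pm\mathbf{1}_n$ and in particular $\mu[V_\mu]^T\mu[V_\mu]=n$.

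Next, exactly as in Proposition \ref{formula_signedcoronal}, net-regularity gives $A(H^s_\mu)\,\mu[V_\mu]=k\,\mu[V_\mu]$. Combining this with $D=rI$, I compute
$$L(H^s_\mu)\,\mu[V_\mu]=(rI-A(H^s_\mu))\mu[V_\mu]=(r-k)\,\mu[V_\mu]=2d^-\,\mu[V_\mu],$$
using the identity $r-k=(d^++d^-)-(d^+-d^-)=2d^-$. Hence $\mu[V_\mu]$ is an eigenvector of $L(H^s_\mu)$ with eigenvalue $2d^-$, so $((\alpha-1)I-L(H^s_\mu))\mu[V_\mu]=(\alpha-1-2d^-)\mu[V_\mu]$ and therefore $((\alpha-1)I-L(H^s_\mu))^{-1}\mu[V_\mu]=(\alpha-1-2d^-)^{-1}\mu[V_\mu]$.

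Finally I would substitute into the definition of the $L$-signed coronal to obtain
$$\chi_{L(H^s_\mu)}(\alpha)=\mu[V_\mu]^T((\alpha-1)I-L(H^s_\mu))^{-1}\mu[V_\mu]=\frac{\mu[V_\mu]^T\mu[V_\mu]}{\alpha-1-2d^-}=\frac{n}{\alpha-1-2d^-}.$$
There is no serious obstacle here; the computation is short once the eigenvector observation is in place. The only points that require genuine care are verifying that co-regularity forces the marking to be constant (so that $\mu[V_\mu]=\pm\mathbf{1}_n$ and the eigenrelation $A(H^s_\mu)\mu[V_\mu]=k\,\mu[V_\mu]$ both hold cleanly), and the identity $r-k=2d^-$ that rewrites the eigenvalue in the form stated in the proposition.
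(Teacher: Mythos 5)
Your proposal is correct and follows essentially the same route as the paper: both rest on the observation that $\mu[V_\mu]=\pm\mathbf{1}_n$ is an eigenvector of $L(H^s_\mu)$ with eigenvalue $2d^-$, followed by the same inversion and substitution into the definition of the $L$-signed coronal. The only difference is cosmetic—you derive the eigenvalue via $D=rI$ and $A(H^s_\mu)\mu[V_\mu]=k\,\mu[V_\mu]$ together with $r-k=2d^-$, whereas the paper states directly that the row sums of $L(H^s_\mu)$ equal $2d^-$; your explicit justification that co-regularity forces the canonical marking to be constant is a detail the paper asserts without proof.
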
          
    \begin{proof}
         Since $H^s_{\mu}$ is a co-regular signed graph, each node has the same net degree, and the sum of the entries in each row of its Laplacian matrix is equal to two times the negative degree of any vertex of $H^s_{\mu}$ so we have $L(H^s_{\mu})\mu[V_\mu]=2d^{-}\mu[V_\mu]$, where $\mu[V_\mu] = \textbf{1}_n \text{ or } -\textbf{1}_n $ and in either case  
    
         \begin{align*}
            \left((\alpha-1)I-L(H^s_{\mu})\right).\mu [V_\mu] &= \left((\alpha -1)-2d^{-}\right)I \mu [V_\mu] \\
             &=\left(\alpha -1-2d^{-}\right)\mu [V_\mu].\\
        \end{align*}
       
        This implies $\mu [V_\mu]^T\left((\alpha-1)I-L(H^s_{\mu})\right)^{-1}\mu [V_\mu] = \frac{\mu [V_\mu]^T\mu [V_\mu]}{\alpha -1-2d^{-}}.$ Therefore the desired result follows.
    \end{proof}   
    \begin{proposition}  {\label{proposition 3.5}}
          Let $H^s_{\mu}$ be a co-regular signed graph of order $n$ and co-regularity pair $(r,k)$. Then
         $${\chi}_{Q(H^s_{\mu})}(\beta) = \frac{n}{\beta-1-2d^{+}},$$
         where $d^{+}$ is the positive degree of any vertex of $H^s_{\mu}$.
    \end{proposition}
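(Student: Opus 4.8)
The plan is to mirror the argument of Proposition~\ref{lm:Laplacian_coregular} almost verbatim, replacing the Laplacian $L = D - A$ by the signless Laplacian $Q = D + A$ and tracking the single effect that the sign change has on the relevant row sums. As in Propositions~\ref{formula_signedcoronal} and~\ref{lm:Laplacian_coregular}, the entire proof reduces to exhibiting $\mu[V_\mu]$ as an eigenvector of $Q(H^s_\mu)$ with an explicitly computable eigenvalue, and then substituting that eigenrelation into the definition of the $Q$-signed coronal.

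First I would record the two structural consequences of co-regularity. Since the underlying graph of $H^s_\mu$ is $r$-regular, the degree matrix is $D = rI$. Moreover, co-regularity with pair $(r,k)$ forces $d^+(v)+d^-(v)=r$ and $d^+(v)-d^-(v)=k$ simultaneously, so $d^-(v) = (r-k)/2$ is the same at every vertex; under the canonical marking $\mu(v)=\prod_{e \ni v}\sigma(e) = (-1)^{d^-(v)}$ this means the marking is constant, i.e. $\mu[V_\mu] = \textbf{1}_n$ or $\mu[V_\mu] = -\textbf{1}_n$, exactly the starting point of the earlier propositions.

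The key step is then to compute the constant row sum of $Q(H^s_\mu)$. In row $v$ the diagonal entry is $d(v)=d^+(v)+d^-(v)=r$, while the off-diagonal entries are $+a_{vw}$ (rather than $-a_{vw}$ as in the Laplacian), so the off-diagonal part sums to $\sum_w a_{vw} = d^+(v)-d^-(v)=k$. Hence each row of $Q(H^s_\mu)$ sums to $r+k = (d^++d^-)+(d^+-d^-)=2d^+$, giving $Q(H^s_\mu)\,\textbf{1}_n = 2d^+\,\textbf{1}_n$ and therefore, for $\mu[V_\mu]=\pm\textbf{1}_n$ in either case,
$$Q(H^s_\mu)\,\mu[V_\mu] = 2d^+\,\mu[V_\mu].$$
Consequently $\big((\beta-1)I - Q(H^s_\mu)\big)\mu[V_\mu] = (\beta-1-2d^+)\mu[V_\mu]$, so $\mu[V_\mu]$ is an eigenvector of the resolvent with eigenvalue $(\beta-1-2d^+)^{-1}$, and
$$\chi_{Q(H^s_\mu)}(\beta) = \mu[V_\mu]^T\big((\beta-1)I - Q(H^s_\mu)\big)^{-1}\mu[V_\mu] = \frac{\mu[V_\mu]^T\mu[V_\mu]}{\beta-1-2d^+} = \frac{n}{\beta-1-2d^+}.$$
There is no genuine obstacle: the only point deserving care is the sign bookkeeping that turns the Laplacian row sum $2d^-$ into the signless Laplacian row sum $2d^+$, and everything else is identical to the proof of Proposition~\ref{lm:Laplacian_coregular}.
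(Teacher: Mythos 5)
Your proof is correct and takes essentially the same route as the paper: the paper's own proof of this proposition simply states that it is analogous to Proposition~\ref{lm:Laplacian_coregular}, and your argument carries out exactly that analogy, with the correct sign bookkeeping showing the row sums of $Q = D + A$ equal $r + k = 2d^{+}$ instead of $2d^{-}$. Your explicit justification that co-regularity forces $d^{-}(v)=(r-k)/2$ to be constant, hence $\mu[V_\mu]=\pm\textbf{1}_n$ under the canonical marking, is in fact slightly more careful than the paper's, which asserts this without comment.
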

    \begin{proof}
        The proof is similar to proposition \ref{lm:Laplacian_coregular}.
    \end{proof}

 \begin{notation}
  \label{notation 3.1}
Consider a signed graph $G^s_\mu$ with $n$ vertices. Additionally, let there be signed graphs $H^s_{\mu_1}, H^s_{\mu_2},\dots, H^s_{\mu_n}$ with $t_1,t_2,\dots,t_n$ vertices, respectively. In order to get the desired expression for the signed characteristic polynomial, signed Laplacian Polynomial, and signed signless Laplacian polynomials, we introduce the following three polynomials involving n variables each 
    
 
    \begin{eqnarray*}
        g(\chi_{H^s_{\mu_1}}(\lambda)
        ,\dots, \chi_{H^s_{\mu_n}}(\lambda)
        ; G^s_\mu) & = & \det\left (\left[ 
            \begin{matrix}
        		\lambda-\chi_{H^s_{\mu_1}}(\lambda) & 0 & 0 \\
        		0& \ddots & 0 \\
        	    0 & 0 & \lambda-\chi_{H^s_{\mu_n}}(\lambda)
            \end{matrix} \right]-A(G^s_\mu )\right ), \\
    L({\chi}_{L({H^s_{\mu_1}})},\dots, {\chi}_{L({H^s_{\mu_n}})}; G^s_\mu) & = & \det\left (\left[ 
            \begin{matrix}
    		\alpha-{t_1}-{\chi }_{L(H^s_{\mu_1})}(\alpha) & 0 & 0 \\
    		0 & \ddots & 0 \\
    	    0 & 0 & \alpha-t_n-{\chi }_{L(H^s_{\mu_n})}(\alpha))
    	\end{matrix}
     \right] -L(G^s_\mu )\right),    \\
Q({\chi}_{Q(H^s_{\mu_1})}(\beta),\dots, {\chi}_{Q(H^s_{\mu_n})}(\beta); G^s_\mu) &=& \det \left (\left[ \begin{matrix}
    		\beta-t_1-{\chi}_{Q(H^s_{\mu_1})}(\beta) & 0 & 0 \\
    		0& \ddots & 0 \\
    	    0 & 0 & \beta-t_n-{\chi}_{Q(H^s_{\mu_n})}(\beta))
    		\end{matrix} \right]-Q(G^s_\mu)\right).		
     \end{eqnarray*}
     
       \end{notation}	
        
	\begin{remark}   {\label{Remark_01}}
    	Let $G^s_\mu $ be a signed graph of order $n$ and $H^s_{\mu_1}, H^s_{\mu_2},\dots, H^s_{\mu_n}$ be $n$ signed graphs. If $\chi_{H^s_{\mu_1}}(\lambda)=\chi_{H^s_{\mu_2}}(\lambda)\dots=\chi_{H^s_{\mu_n}}(\lambda)=\chi_{H^s_{\mu}}(\lambda)$ then
    	$$g(\chi_{H^s_{\mu_1}}(\lambda),\dots, \chi_{H^s_{\mu_n}}(\lambda); G^s_\mu )=f_{G^s_\mu }\left(\lambda-\chi_{H^s_{\mu }}(\lambda)\right)$$ 
where $f_{G^s_\mu}$ is the signed characteristic polynomial of ${G^s_\mu}$.
     
\end{remark}
	
    \begin{remark}{\label{remark 3.2}}
    	Let $G^s_\mu$ be a signed graph of order n and $H^s_{\mu_1}, H^s_{\mu_2},\dots, H^s_{\mu_n}$ be $n$ signed graphs each of them has order $m$. If ${\chi}_{L({H^s_{\mu_1}})}(\alpha)={\chi}_{L({H^s_{\mu_2}})}(\alpha)\dots={\chi}_{L({H^s_{\mu_n}})}(\alpha)={\chi}_{L({H^s_{\mu}})}(\alpha)$, then
        $$	L_{g}({\chi}_{L({H^s_{\mu_1}})}(\alpha),\dots {\chi}_{L({H^s_{\mu_n}})}(\alpha); G^s_\mu)=f_{L(G^s_\mu)}\left(\alpha-m-{\chi}_{L({H^s_{\mu}})}(\alpha)\right)$$
 where $f_{L(G^s_\mu)}\left(\alpha \right)$ is the signed Laplacian polynomial of ${G^s_\mu}$.       
    \end{remark}

    \begin{remark}  {\label{remark 3.3}}

    
        Let $G^s_\mu$ be a signed graph of order $n$ and $H^s_{\mu_1}, H^s_{\mu_2},\dots, H^s_{\mu_n}$ be $n$ signed graphs each of them has order $m$. If ${\chi}_{Q({H^s_{\mu_1}})}(\beta)={\chi }_{Q({H^s_{\mu_2}})}(\beta)\dots={\chi }_{Q({H^s_{\mu_n}})}(\beta )={\chi }_{Q({H^s_{\mu }})}(\beta )$ then
        $$Q_{g}({\chi }_{Q({H^s_{\mu_1}})}(\beta ),\dots, {\chi }_{Q({H^s_{\mu_n}})}(\beta ); G^s_\mu) = f_{Q(G^s_\mu)}\left( \beta -m-{\chi }_{Q({H^s_{\mu }})}(\beta )\right)$$
  where $f_{Q(G^s_\mu)}\left(\beta \right)$ is the signed signless Laplacian polynomial of ${G^s_\mu}$.      
    \end{remark}
    \subsection{Characteristic polynomials of \texorpdfstring{$G^s_\mu\circ \overset{n}{\underset{l=1}{ \Lambda}}H^s_{\mu_l}$}{}}
   We shall obtain the characteristic polynomial of the generalized corona product for signed graphs which will enable us to identify certain cospectral signed graphs. 
	
    \begin{theorem} 
    {\label{lm:char_polynomial}}
         Let $G^s_\mu$ be a signed graph with n vertices and $H^s_{\mu_1}, H^s_{\mu_2},\dots, H^s_{\mu_n}$ be n signed graphs (not necessarily non-isomorphic) with orders $t_1, \dots, t_n,$  respectively. Then
            
        	$$f_{G^s_\mu\circ \overset{n}{\underset{l=1}{ \Lambda}}H^s_{\mu_l}}(\lambda)= \left(\prod_{l=1}^{n} f_{H^s_{\mu_l}}(\lambda)\right) \cdot g(\chi_{H^s_{\mu_1}}(\lambda),\dots, \chi_{H^s_{\mu_n}}(\lambda); G^s_\mu).$$		
    \end{theorem}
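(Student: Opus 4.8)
The plan is to compute the characteristic polynomial $f_{G^s_\mu\circ \overset{n}{\underset{l=1}{\Lambda}}H^s_{\mu_l}}(\lambda) = \det\!\left(\lambda I - A\!\left(G^s_{\mu}\circ \overset{n}{\underset{l=1}{\Lambda}}H^s_{\mu_l}\right)\right)$ by exploiting the block structure of the adjacency matrix displayed in the excerpt, namely the $2\times 2$ block form with diagonal blocks $A(G^s_\mu)$ and $D$, off-diagonal block $PQ$, and its transpose. First I would write
\[
\lambda I - A\!\left(G^s_{\mu}\circ \overset{n}{\underset{l=1}{\Lambda}}H^s_{\mu_l}\right) = \begin{bmatrix} \lambda I_n - A(G^s_\mu) & -PQ \\ -(PQ)^T & \lambda I - D \end{bmatrix},
\]
and apply the Schur complement formula (Lemma \ref{schur_complement}) with respect to the lower-right block $\lambda I - D$. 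Since $D$ is block-diagonal with diagonal blocks $B_1,\dots,B_n$, we immediately get $\det(\lambda I - D) = \prod_{l=1}^n \det(\lambda I_{t_l} - B_l) = \prod_{l=1}^n f_{H^s_{\mu_l}}(\lambda)$, which produces the first factor in the claimed formula.

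The heart of the argument is then to simplify the Schur complement
\[
\bigl(\lambda I_n - A(G^s_\mu)\bigr) - (PQ)(\lambda I - D)^{-1}(PQ)^T
\]
and show that its determinant equals $g(\chi_{H^s_{\mu_1}}(\lambda),\dots,\chi_{H^s_{\mu_n}}(\lambda); G^s_\mu)$. The key observation is that $(\lambda I - D)^{-1}$ is block-diagonal with blocks $(\lambda I_{t_l}-B_l)^{-1}$, and because $P$ is diagonal with entries $\mu(v_l)$ while $Q$ is block-diagonal with row blocks $\mu_l[V_{\mu_l}]^T$, the product $(PQ)(\lambda I - D)^{-1}(PQ)^T$ is again a \emph{diagonal} $n\times n$ matrix. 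Its $(l,l)$ entry is
\[
\mu(v_l)^2\,\mu_l[V_{\mu_l}]^T(\lambda I_{t_l}-B_l)^{-1}\mu_l[V_{\mu_l}] = \chi_{H^s_{\mu_l}}(\lambda),
\]
using $\mu(v_l)^2=1$ and the definition of the signed coronal. Substituting this diagonal matrix back shows that the Schur complement is exactly $\operatorname{diag}(\lambda - \chi_{H^s_{\mu_1}}(\lambda),\dots,\lambda-\chi_{H^s_{\mu_n}}(\lambda)) - A(G^s_\mu)$, whose determinant is the polynomial $g$ by Notation \ref{notation 3.1}.

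I expect the main obstacle to be bookkeeping rather than conceptual: one must verify carefully that the off-diagonal entries of $(PQ)(\lambda I-D)^{-1}(PQ)^T$ vanish, which follows from the block-diagonal structure of $Q$ and $(\lambda I - D)^{-1}$ (the $l$-th row block of $Q$ only overlaps the $l$-th diagonal block of $(\lambda I - D)^{-1}$), and that the diagonal entries collapse to the signed coronals via $\mu(v_l)^2 = 1$. A minor technical point worth noting is that Lemma \ref{schur_complement} formally requires $\lambda I - D$ to be invertible; this holds when viewed over the field of rational functions $\mathbb{C}(\lambda)$, exactly as in the definition of the coronal, so the identity of polynomials then holds by continuation. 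Assembling the two factors gives the stated formula.
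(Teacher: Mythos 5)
Your proposal is correct and follows essentially the same route as the paper's own proof: Schur complement with respect to the block-diagonal $\lambda I - D$, factoring out $\prod_{l=1}^n f_{H^s_{\mu_l}}(\lambda)$, and collapsing $(PQ)(\lambda I - D)^{-1}(PQ)^T$ to $\mathrm{diag}(\chi_{H^s_{\mu_1}}(\lambda),\dots,\chi_{H^s_{\mu_n}}(\lambda))$ via the block structure of $Q$ and $\mu(v_l)^2=1$. Your explicit remarks on the vanishing off-diagonal entries, the sign cancellation in the off-diagonal blocks, and invertibility over $\C(\lambda)$ are points the paper glosses over, but they do not change the argument.
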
	
    \begin{proof} Consider Let $A$ and $B_l$ as the adjacency matrices of graphs $G^s_\mu$  and $H^s_{\mu_l}$, respectively, for $l=1,\dots,n.$ In light of the equations (\ref{eqn:pq})-(\ref{eqn:d}), it can be inferred that
$$f_{G^s_\mu \circ \overset{n}{\underset{l=1}{ \Lambda}}H^s_{\mu_l}}(\lambda)=\det \left[ \begin{matrix}
			\lambda I-A(G^s_\mu ) & PQ \\ \\
			(PQ)^T & \lambda I-D
		\end{matrix}
		\right],$$ By applying Lemma \ref{schur_complement}, we have
            \beano
                && f_{G^s_\mu\circ \overset{n}{\underset{l=1}{ \Lambda}}H^s_{\mu_l}}(\lambda) \\
                &=& \left(\prod_{l=1}^{n} f_{H^s_{\mu_l}}(\lambda)\right)\cdot \det \left( \lambda I-A(G^s_\mu)-PQ\left[ \begin{matrix}
                		\lambda I-{B_1} & 0 & 0 \\
                		0& \ddots & 0 \\                		
                	0 & 0 & \lambda I-{B_n}
                		\end{matrix} \right]^{-1}(PQ)^T\right ) \\
                &=& \left(\prod_{l=1}^{n} f_{H^s_{\mu_l}}(\lambda)\right)\cdot \det \left( \lambda I-A(G^s_\mu)-PQ\left[ \begin{matrix}
                		(\lambda I-{B_1})^{-1} & 0 & 0 \\
                		0& \ddots & 0 \\                		
                	0 & 0 & (\lambda I-{B_n})^{-1}
                		\end{matrix} \right]Q^TP^T\right )  \\
                &=& \left(\prod_{l=1}^{n} f_{H^s_{\mu_l}}(\lambda)\right) \cdot \det \left(\lambda I-A(G^s_\mu)-P
                		\bmatrix{
                	\mu_1[V_{\mu_1}]^T	(\lambda I-{B_1})^{-1}\mu_1[V_{\mu_1}] & 0 & 0 \\
                		0& \ddots & 0 \\
                	0 & 0 &\mu_n[V_{\mu_n}]^T( \lambda I-{B_n})^{-1} \mu_n[V_{\mu_n}]
                		} P^T\right) \\
                &=& \left(\prod_{l=1}^{n} f_{H^s_{\mu_l}}(\lambda)\right) \cdot \det \left (  \lambda I-A(G^s_\mu)-\left[ \begin{matrix}
                		\chi_{H^s_{\mu_1}}(\lambda)& 0 & 0 \\
                		0& \ddots & 0 \\
                	0 & 0 &\chi_{H^s_{\mu_n}}(\lambda)
                		\end{matrix} \right]\right) \\
                			&=& \left(\prod_{l=1}^{n} f_{H^s_{\mu_l}}(\lambda)\right) \cdot \det \left( \bmatrix{
                		\lambda -\chi_{H^s_{\mu_1}}(\lambda)& 0 & 0 \\
                		0& \ddots & 0 \\
                	0 & 0 &\lambda-\chi_{H^s_{\mu_n}}(\lambda)}-A(G^s_\mu)\right)  \\
                		&=& \left(\prod_{l=1}^{n} f_{H^s_{\mu_l}}(\lambda)\right) \cdot g(\chi_{H^s_{\mu_1}}(\lambda),\dots, \chi_{H^s_{\mu_n}}(\lambda); G^s_\mu)
            \eeano 
   This validates the proof.     \end{proof} 
The following corollary is a direct consequence of the theorem \ref{lm:char_polynomial}.
    \begin{corollary}
       
If $G^s_{\mu_1}$ and $H^s_{{\mu}^{\prime}}$ are two signed graphs having orders $n$ and $m$ respectively then the following holds:
        $$f_{G^s_{\mu}\circ H^s_{\mu^{\prime}}}(\lambda)=\left(f_{\chi_{H^s_{\mu_2}}}(\lambda)\right)^n \cdot f_{G^s_{\mu_1}}\left(\lambda-\chi_{H^s_{\mu_2}}(\lambda)\right).$$    
    \end{corollary}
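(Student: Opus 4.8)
The plan is to derive this corollary directly from Theorem \ref{lm:char_polynomial} by specializing all of the constituent signed graphs to a single common graph. First I would observe that the ordinary corona product $G^s_{\mu}\circ H^s_{\mu^{\prime}}$ is precisely the generalized corona product $G^s_\mu\circ \overset{n}{\underset{l=1}{ \Lambda}}H^s_{\mu_l}$ in the special case where every copy $H^s_{\mu_l}$ is taken to be the same signed graph $H^s_{\mu^{\prime}}$, for $l=1,\dots,n$. This is immediate from Definition \ref{def:cpg2} compared with Definition \ref{def:cpg}, since attaching one copy of $H^s_{\mu^{\prime}}$ to each of the $n$ vertices of $G^s_\mu$ (with edge signs $\mu(u)\mu^{\prime}(v)$) is exactly what both constructions produce when the factors coincide.

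With this identification in place, I would apply Theorem \ref{lm:char_polynomial}. Because all the $H^s_{\mu_l}$ coincide with $H^s_{\mu^{\prime}}$, their signed characteristic polynomials are equal, so the product $\prod_{l=1}^{n} f_{H^s_{\mu_l}}(\lambda)$ collapses to $\left(f_{H^s_{\mu^{\prime}}}(\lambda)\right)^{n}$; likewise all the signed coronals satisfy $\chi_{H^s_{\mu_1}}(\lambda)=\cdots=\chi_{H^s_{\mu_n}}(\lambda)=\chi_{H^s_{\mu^{\prime}}}(\lambda)$.

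The final step is to simplify the factor $g(\chi_{H^s_{\mu_1}}(\lambda),\dots,\chi_{H^s_{\mu_n}}(\lambda); G^s_\mu)$. Since all the coronal arguments are equal to the common value $\chi_{H^s_{\mu^{\prime}}}(\lambda)$, Remark \ref{Remark_01} applies verbatim and yields $g(\chi_{H^s_{\mu_1}}(\lambda),\dots,\chi_{H^s_{\mu_n}}(\lambda); G^s_\mu)=f_{G^s_\mu}\!\left(\lambda-\chi_{H^s_{\mu^{\prime}}}(\lambda)\right)$. Substituting this together with the collapsed product into the conclusion of Theorem \ref{lm:char_polynomial} gives the stated formula.

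There is essentially no analytic obstacle here, as the result is a pure specialization of the main theorem; the only point requiring care is bookkeeping. Specifically, one must confirm that the generalized corona product genuinely reduces to the ordinary corona product when the factors are identical, and then match the (slightly inconsistent) subscripts in the statement to the objects $G^s_\mu$, $H^s_{\mu^{\prime}}$, and $\chi_{H^s_{\mu^{\prime}}}$ that actually appear in Theorem \ref{lm:char_polynomial} and Remark \ref{Remark_01}.
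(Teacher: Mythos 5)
Your proof is correct and takes essentially the same approach as the paper: the paper's proof likewise sets $H^s_{\mu_1} \cong \cdots \cong H^s_{\mu_n} \cong H^s_{\mu^{\prime}}$ and reads the result off as a specialization of Theorem \ref{lm:char_polynomial}. The only cosmetic difference is that the paper cites Proposition \ref{lm:isomorphic_equalcoronal} to equate the coronals of the isomorphic copies, whereas you take the copies to be literally identical and collapse the determinant factor via Remark \ref{Remark_01}; these amount to the same bookkeeping.
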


    \begin{proof}
        Let $H^s_{\mu_1} \cong \ldots \cong H^s_{\mu_n} \cong H^s_{{\mu }^{\prime}}$. Then utilizing theorem \ref{lm:char_polynomial} and proposition \ref{lm:isomorphic_equalcoronal}, we deduce
         \[
         f_{G^s_{\mu}\circ H^s_{\mu^{\prime}}}(\lambda)=\left(f_{\chi_{H^s_{\mu_2}}}(\lambda)\right)^n \cdot f_{G^s_{\mu_1}}\left(\lambda-\chi_{H^s_{\mu_2}}(\lambda)\right) \qedhere
         \]
         \end{proof}
         
    \begin{corollary} {\label{characteristicpolynomial_knetregularcase}}
        Suppose $G^s_{\mu}$ is a signed graph of order n. Let $H^s_{\mu_1}, H^s_{\mu_2},\dots, H^s_{\mu_n}$ be n signed graphs such that $\chi_{H^s_{\mu_1}}(\lambda)=\dots=\chi_{H^s_{\mu_n}}(\lambda)=\chi_{H^s_{\mu^\prime}}(\lambda)$. Then
        $$f_{G^s_{\mu}\circ \overset{n}{\underset{l=1}{ \Lambda}}H^s_{\mu_l}}(\lambda)=\left(\prod_{l=1}^{n} f_{H^s_{\mu_l}}(\lambda)\right) \cdot f_{G^s_{\mu}}\left(\lambda-\chi_{H^s_{\mu^\prime}}(\lambda)\right)$$ 
        
    \end{corollary}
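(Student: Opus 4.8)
The plan is to obtain this statement as an immediate specialization of Theorem \ref{lm:char_polynomial}, in which the assumption that all the signed coronals coincide collapses the multivariate polynomial $g$ into a single univariate evaluation of the signed characteristic polynomial $f_{G^s_\mu}$. First I would apply Theorem \ref{lm:char_polynomial} directly, with no relation assumed among the factors, to write
$$f_{G^s_\mu\circ \overset{n}{\underset{l=1}{ \Lambda}}H^s_{\mu_l}}(\lambda)= \left(\prod_{l=1}^{n} f_{H^s_{\mu_l}}(\lambda)\right) \cdot g\bigl(\chi_{H^s_{\mu_1}}(\lambda),\dots, \chi_{H^s_{\mu_n}}(\lambda); G^s_\mu\bigr).$$

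Next I would feed the hypothesis $\chi_{H^s_{\mu_1}}(\lambda)=\dots=\chi_{H^s_{\mu_n}}(\lambda)=\chi_{H^s_{\mu^\prime}}(\lambda)$ into the definition of the polynomial $g$. Since every diagonal entry $\lambda-\chi_{H^s_{\mu_l}}(\lambda)$ now equals the common value $\lambda-\chi_{H^s_{\mu^\prime}}(\lambda)$, the diagonal matrix in that definition reduces to the scalar matrix $\bigl(\lambda-\chi_{H^s_{\mu^\prime}}(\lambda)\bigr)I$, whence
$$g\bigl(\chi_{H^s_{\mu_1}}(\lambda),\dots, \chi_{H^s_{\mu_n}}(\lambda); G^s_\mu\bigr)=\det\bigl(\bigl(\lambda-\chi_{H^s_{\mu^\prime}}(\lambda)\bigr)I-A(G^s_\mu)\bigr)=f_{G^s_\mu}\bigl(\lambda-\chi_{H^s_{\mu^\prime}}(\lambda)\bigr).$$
This is exactly the identity recorded in Remark \ref{Remark_01}, so in a fully written proof I would simply cite that remark rather than re-derive it.

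Substituting this evaluation back into the first display yields the asserted factorization, completing the argument. I expect no genuine obstacle here: the corollary is a routine consequence of Theorem \ref{lm:char_polynomial} combined with Remark \ref{Remark_01}. The only point deserving a moment's care is the bookkeeping, namely ensuring that the shared coronal is correctly identified as $\chi_{H^s_{\mu^\prime}}(\lambda)$ so that the argument of $f_{G^s_\mu}$ is precisely $\lambda-\chi_{H^s_{\mu^\prime}}(\lambda)$, and noting that the product $\prod_{l=1}^{n} f_{H^s_{\mu_l}}(\lambda)$ is left untouched, since the hypothesis equates the coronals of the $H^s_{\mu_l}$ and not their characteristic polynomials.
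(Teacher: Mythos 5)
Your proposal is correct and matches the paper's own proof, which likewise obtains the corollary by combining Theorem \ref{lm:char_polynomial} with Remark \ref{Remark_01}. The only difference is that you spell out the collapse of the diagonal matrix into $\bigl(\lambda-\chi_{H^s_{\mu^\prime}}(\lambda)\bigr)I$ explicitly, whereas the paper leaves this to the cited remark.
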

    \begin{proof}
        It is a straight forward implication of theorem \ref{lm:char_polynomial} and remark \ref{Remark_01}.
    \end{proof}

    \begin{corollary}
        Suppose $G^s_{\mu}$ is a signed graph of order $n$. Let $H^s_{\mu_1}, H^s_{\mu_2},\dots, H^s_{\mu_n}$ be a collection of $k$-net regular signed graphs and each of them consists of $m$ vertices. Then $$f_{G^s_{\mu}\circ \overset{n}{\underset{l=1}{ \Lambda}}H^s_{\mu_l}}(\lambda)=\left(\prod_{l=1}^{n} f_{H^s_{\mu_l}}(\lambda)\right).f_{G^s_{\mu}}\left(\lambda-\frac{m}{\lambda-k}\right)$$
    
    \end{corollary}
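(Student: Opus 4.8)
The plan is to obtain this corollary as a direct specialization of Corollary \ref{characteristicpolynomial_knetregularcase}, so that essentially no fresh computation is required; all the substantive work has already been carried out in the preceding results. The only point that needs checking is that the common hypothesis of Corollary \ref{characteristicpolynomial_knetregularcase} --- namely, that the signed coronals $\chi_{H^s_{\mu_l}}(\lambda)$ all coincide --- is forced by the assumptions that each $H^s_{\mu_l}$ is $k$-net regular and of the same order $m$.

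First I would invoke Proposition \ref{formula_signedcoronal}. Since each $H^s_{\mu_l}$ is a $k$-net regular signed graph on $m$ vertices, its signed coronal has the closed form $\chi_{H^s_{\mu_l}}(\lambda)=\frac{m}{\lambda-k}$. Crucially, this rational function does not depend on the index $l$, so $\chi_{H^s_{\mu_1}}(\lambda)=\cdots=\chi_{H^s_{\mu_n}}(\lambda)=\frac{m}{\lambda-k}$, and we may take $\chi_{H^s_{\mu^\prime}}(\lambda)=\frac{m}{\lambda-k}$ as the shared coronal. With the equality of coronals established, the hypotheses of Corollary \ref{characteristicpolynomial_knetregularcase} are met, and applying it gives $f_{G^s_{\mu}\circ \overset{n}{\underset{l=1}{ \Lambda}}H^s_{\mu_l}}(\lambda)=\left(\prod_{l=1}^{n} f_{H^s_{\mu_l}}(\lambda)\right)\cdot f_{G^s_{\mu}}\bigl(\lambda-\chi_{H^s_{\mu^\prime}}(\lambda)\bigr)$. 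Substituting $\chi_{H^s_{\mu^\prime}}(\lambda)=\frac{m}{\lambda-k}$ into the argument of $f_{G^s_{\mu}}$ then yields precisely the stated formula.

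There is no genuine obstacle here: the content is inherited entirely from Theorem \ref{lm:char_polynomial} (via Corollary \ref{characteristicpolynomial_knetregularcase}) together with the closed form of Proposition \ref{formula_signedcoronal}. The only caveat worth recording is that, working over the field of rational functions $\C(\lambda)$, the denominator $\lambda-k$ is a nonzero element, so $\frac{m}{\lambda-k}$ and hence $\lambda-\frac{m}{\lambda-k}$ are well-defined; moreover, the net-regularity condition $\mu[V_{\mu_l}]=\pm\textbf{1}_m$ exploited in Proposition \ref{formula_signedcoronal} is exactly what makes the eigenvector argument there apply uniformly across every copy, which is what allows the coronals to collapse to a single expression.
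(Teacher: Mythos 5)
Your proposal is correct and follows exactly the paper's route: the paper also proves this corollary by combining Proposition \ref{formula_signedcoronal} (which gives the common coronal $\frac{m}{\lambda-k}$ for each $k$-net regular $H^s_{\mu_l}$ of order $m$) with Corollary \ref{characteristicpolynomial_knetregularcase}. You simply spell out the one-line substitution in more detail than the paper does.
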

\begin{proof}
It directly follows from proposition \ref{formula_signedcoronal} and corollary \ref{characteristicpolynomial_knetregularcase}.
\end{proof}

    \begin{corollary}
        Let $G^s_{\mu^\prime}$ and $G^s_{\mu^{\prime\prime}}$ be two co-spectral signed graphs of order $n$ each and $H^s_{\mu_1}, H^s_{\mu_2},\dots, H^s_{\mu_{n}}$ be $n$ signed graphs. If $\chi_{H^s_{\mu_1}}(\lambda)=\chi_{H^s_{\mu_2}}(\lambda)\dots=\chi_{H^s_{\mu_n}}(\lambda)=\chi_{H^s_{\mu}}(\lambda)$  then $G^s_{\mu^\prime}\circ \overset{n}{\underset{l=1}{ \Lambda}}H^s_{\mu_l}(\lambda)$ and $G^s_{\mu^{\prime\prime}}\circ \overset{n}{\underset{l=1}{ \Lambda}}H^s_{\mu_l}(\lambda)$ are co-spectral. 
    \end{corollary}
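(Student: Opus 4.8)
The plan is to reduce the statement directly to Corollary \ref{characteristicpolynomial_knetregularcase}. Recall that two signed graphs are co-spectral precisely when their signed characteristic polynomials coincide, so it suffices to prove the polynomial identity
$$f_{G^s_{\mu^\prime}\circ \overset{n}{\underset{l=1}{ \Lambda}}H^s_{\mu_l}}(\lambda)=f_{G^s_{\mu^{\prime\prime}}\circ \overset{n}{\underset{l=1}{ \Lambda}}H^s_{\mu_l}}(\lambda).$$
Since $G^s_{\mu^\prime}$ and $G^s_{\mu^{\prime\prime}}$ are co-spectral of the same order $n$, we have the equality of polynomials $f_{G^s_{\mu^\prime}}(\lambda)=f_{G^s_{\mu^{\prime\prime}}}(\lambda)$, which is the only hypothesis about the base graphs we shall invoke.

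The hypothesis $\chi_{H^s_{\mu_1}}(\lambda)=\cdots=\chi_{H^s_{\mu_n}}(\lambda)=\chi_{H^s_{\mu}}(\lambda)$ is exactly the condition required to apply Corollary \ref{characteristicpolynomial_knetregularcase}. Applying that corollary once with base graph $G^s_{\mu^\prime}$ and once with base graph $G^s_{\mu^{\prime\prime}}$, and noting that the constituent graphs $H^s_{\mu_1},\dots,H^s_{\mu_n}$ are common to both products, I would obtain
$$f_{G^s_{\mu^\prime}\circ \overset{n}{\underset{l=1}{ \Lambda}}H^s_{\mu_l}}(\lambda)=\left(\prod_{l=1}^{n} f_{H^s_{\mu_l}}(\lambda)\right)\cdot f_{G^s_{\mu^\prime}}\!\left(\lambda-\chi_{H^s_{\mu}}(\lambda)\right)$$
and the analogous expression with $\mu^{\prime\prime}$ in place of $\mu^\prime$. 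The prefactor $\prod_{l=1}^{n} f_{H^s_{\mu_l}}(\lambda)$ is identical in both expressions, so the whole question collapses to whether the remaining factors agree.

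The final step is to observe that $f_{G^s_{\mu^\prime}}=f_{G^s_{\mu^{\prime\prime}}}$ as polynomials, hence substituting the common argument $\lambda-\chi_{H^s_{\mu}}(\lambda)$ into each yields the same rational function in $\lambda$; that is, $f_{G^s_{\mu^\prime}}\!\left(\lambda-\chi_{H^s_{\mu}}(\lambda)\right)=f_{G^s_{\mu^{\prime\prime}}}\!\left(\lambda-\chi_{H^s_{\mu}}(\lambda)\right)$. Combining with the shared prefactor establishes the desired equality of the two signed characteristic polynomials, proving co-spectrality. There is no genuine obstacle here: the result is a direct consequence of Corollary \ref{characteristicpolynomial_knetregularcase}, and the only point meriting a sentence is the elementary fact that equal polynomials remain equal under substitution of a common argument, which guarantees that the coronal shift $\lambda\mapsto\lambda-\chi_{H^s_{\mu}}(\lambda)$ cannot break the equality inherited from co-spectrality of the base graphs.
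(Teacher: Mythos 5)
Your proposal is correct and follows essentially the same route as the paper: the paper's own proof applies Theorem \ref{lm:char_polynomial} to write each product's characteristic polynomial as $\left(\prod_{l=1}^{n} f_{H^s_{\mu_l}}(\lambda)\right)\cdot f_{G^s_{\mu^\prime}}\left(\lambda-\chi_{H^s_{\mu}}(\lambda)\right)$ and then swaps $f_{G^s_{\mu^\prime}}$ for $f_{G^s_{\mu^{\prime\prime}}}$ using co-spectrality, which is exactly your argument with the factorization re-derived inline rather than cited. Your only deviation is invoking the already-established Corollary \ref{characteristicpolynomial_knetregularcase} to obtain that factorization, which is a legitimate (and slightly tidier) packaging of the same steps.
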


    \begin{proof}
    From theorem \ref{lm:char_polynomial} we have
    \begin{equation*}
    \begin{split}
     f_{G^s_{\mu^\prime}\circ \overset{n}{\underset{l=1}{ \Lambda}}H^s_{\mu_l}}(\lambda) \\
    &= \left( \prod_{l=1}^{n} f_{H^s_{\mu_l}}(\lambda)\right) \det \left( \bmatrix{
    		\lambda-\chi_{H^s_{\mu}}(\lambda) & 0 & 0 \\
    		0& \ddots & 0 \\
    	0 & 0 & \lambda-\chi_{H^s_{\mu}}(\lambda)}-A(G^s_{\mu^\prime})\right) \\
    &= \left(\prod_{l=1}^{n} f_{H^s_{\mu_l}}(\lambda)\right) \cdot f_{G^s_{\mu^\prime}}\left(\lambda-\chi_{H^s_{\mu}}(\lambda) \right) \\
    &= \left(\prod_{l=1}^{n} f_{H^s_{\mu_l}}(\lambda)\right) \cdot f_{G^s_{\mu^{\prime\prime}}}\left(\lambda-\chi_{H^s_{\mu}}(\lambda) \right) \\
    &= f_{G^s_{\mu^{\prime\prime}} \circ \overset{n}{\underset{l=1}{ \Lambda}}H^s_{\mu_l}}(\lambda).  \qedhere
    \end{split}
    \end{equation*} 
    \end{proof}
The corollary below is readily provable using Theorem \ref{lm:char_polynomial}.
    \begin{corollary}
    Let $G^s_{\mu}$ be a signed graph of order $n$ and $H^s_{\mu_1}, H^s_{\mu_2},\dots, H^s_{\mu_{2n}}$ be a family of cospectral signed graphs such that $\chi_{H^s_{\mu_1}}(\lambda)=\chi_{H^s_{\mu_2}}(\lambda)\dots=\chi_{H^s_{\mu_{2n}}}(\lambda)$  then $G^s_\mu \circ \overset{n}{\underset{l=1}{ \Lambda}}H^s_{\mu_l}(\lambda)$ and $G^s_\mu \circ \overset{2n}{\underset{l=n+1}{ \Lambda}}H^s_{\mu_l}(\lambda)$ are cospectral. 
    
    \end{corollary}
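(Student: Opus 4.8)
The plan is to apply Theorem \ref{lm:char_polynomial} to each of the two generalized corona products and then observe that both resulting expressions collapse to the same polynomial. First I would write the characteristic polynomial of the first product directly from Theorem \ref{lm:char_polynomial},
$$f_{G^s_\mu\circ \overset{n}{\underset{l=1}{ \Lambda}}H^s_{\mu_l}}(\lambda)= \left(\prod_{l=1}^{n} f_{H^s_{\mu_l}}(\lambda)\right) \cdot g\bigl(\chi_{H^s_{\mu_1}}(\lambda),\dots, \chi_{H^s_{\mu_n}}(\lambda); G^s_\mu\bigr),$$
and the characteristic polynomial of the second product analogously, with the index range shifted to $l=n+1,\dots,2n$:
$$f_{G^s_\mu\circ \overset{2n}{\underset{l=n+1}{ \Lambda}}H^s_{\mu_l}}(\lambda)= \left(\prod_{l=n+1}^{2n} f_{H^s_{\mu_l}}(\lambda)\right) \cdot g\bigl(\chi_{H^s_{\mu_{n+1}}}(\lambda),\dots, \chi_{H^s_{\mu_{2n}}}(\lambda); G^s_\mu\bigr).$$

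The crux is to match the two factors appearing in each expression. Since $H^s_{\mu_1},\dots,H^s_{\mu_{2n}}$ are cospectral, their signed characteristic polynomials all coincide, so in particular
$$\prod_{l=1}^{n} f_{H^s_{\mu_l}}(\lambda)=\prod_{l=n+1}^{2n} f_{H^s_{\mu_l}}(\lambda),$$
which settles the first factor. For the second factor I would use the hypothesis that all coronals agree, writing $\chi_{H^s_{\mu_l}}(\lambda)=\chi_{H^s_{\mu}}(\lambda)$ for every $l$; then Remark \ref{Remark_01} applies to each index block and reduces both $g$-polynomials to the single common expression $f_{G^s_\mu}\bigl(\lambda-\chi_{H^s_{\mu}}(\lambda)\bigr)$. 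Combining the two equalities gives $f_{G^s_\mu\circ \overset{n}{\underset{l=1}{ \Lambda}}H^s_{\mu_l}}(\lambda)=f_{G^s_\mu\circ \overset{2n}{\underset{l=n+1}{ \Lambda}}H^s_{\mu_l}}(\lambda)$, which is exactly the claimed cospectrality.

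I expect no genuine obstacle here: the statement is essentially a bookkeeping consequence of the factorization in Theorem \ref{lm:char_polynomial}, which exhibits the characteristic polynomial as depending on the petal graphs only through (i) the product of their individual characteristic polynomials and (ii) their coronals. Both of these quantities are pinned down by the hypotheses (cospectrality fixes the former, the common-coronal assumption fixes the latter), so the two products cannot be distinguished spectrally. The only point requiring a moment's care is confirming that Remark \ref{Remark_01} may be invoked verbatim on the shifted block $l=n+1,\dots,2n$; this is immediate since the common coronal value $\chi_{H^s_{\mu}}(\lambda)$ is identical across both blocks.
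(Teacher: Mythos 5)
Your proposal is correct and is exactly the argument the paper intends: the paper gives no explicit proof, stating only that the corollary "is readily provable using Theorem \ref{lm:char_polynomial}," and your proof carries out precisely that route — applying the factorization to both products, using cospectrality to equate the $\prod f_{H^s_{\mu_l}}$ factors and the common coronal (via Remark \ref{Remark_01}) to equate the $g$ factors. No gaps.
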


    \begin{definition} \cite{pirzada2008anote}
     A signed Graph G is called a signed bipartite graph if its set of vertices can be divided into two disjoint sets, M and N, such that each edge connects a vertex from M to a vertex from N. If the signed graph is bipartite, we will denote it by $G(M, N)$ where $M$ and $N$ are the disjoint partitions of the vertex set of G. 
    
    \end{definition}
\begin{notation}  {\label{notation_4.1}}
    Suppose $G^s_\mu(M,N)$ is a signed bipartite graph of order $n$, where $|M|=h$ and $|N|=n-h$. Also, assume that $W$ is a matrix of order $h \times (n-h)$ such that
            $$ A(G^s_\mu)= \left[ \begin{matrix}
        			 0 & W \\ \\
        			W^T & 0
        			\end{matrix}
        			\right]$$
                    Therefore, 
                    $$f_{G^s_\mu}(\lambda)= \det(\lam I- A(G))= \det \left[ \begin{matrix}
        			\lambda I_h & -W \\ \\
        			-W^T & \lambda I_{n-h}
        			\end{matrix}
        			\right]$$	
        and with two methods of Schur complement \ref{schur_complement}, we have:
        \begin{equation*} \label{eq2}
        f_{G^s_\mu}(\lambda) =
        \begin{cases}
            \lambda^{n-2h} \det (\lambda^2 I_h-WW^T)=\lambda^{n-2h}g_{G^s_\mu}(\lambda). \\ \\
            \lambda^{2h-n} \det (\lambda^2 I_{n-h}-W^TW)=\lambda^{2h-n}q_{G^s_\mu}(\lambda).
        \end{cases}
        \end{equation*}	
        If $n=2h$ then it is clear that $f_{G^s_\mu}(\lambda)=g_{G^s_\mu}(\lambda)=q_{G^s_\mu}(\lambda)$.
        \end{notation}
    \begin{theorem}   {\label{th:square_root}}
    Let $G^s_\mu (M,N)$ be a signed bipartite graph of order $n$, where $|M|=i$ and $|N|=n-i$. Let $H^s_{\mu_1} \cong \ldots \cong H^s_{\mu_i} \cong Z^s_{{\mu }^{'}}$ 
    and $H^s_{\mu_{i+1}} \cong \ldots \cong H^s_{\mu_n} \cong Z^s_{{\mu }^{''}}$. Then the following hold: \newline
    \begin{equation*}
    f_{G^s_{\mu}\circ \overset{n}{\underset{l=1}{ \Lambda}}H^s_{\mu_l}}(\lambda) =
        \begin{cases}
            {(f_{Z^s_{{\mu}^\prime}}(\lambda))}^{i}.{(f_{Z^s_{\mu^{\prime\prime}}}(\lambda))}^{n-i}{(\lambda-\chi_{Z^s_{{\mu}^{\prime\prime}}}(\lambda))}^{n-2i}g_G\left(\sqrt{(\lambda-\chi_{Z^s_{\mu^\prime}(\lambda)})(\lambda-\chi_{Z^s_{\mu^{\prime\prime}}(\lambda)})}\right) & if ~~ n \ge 2i  \\ \\
            {(f_{Z^s_{{\mu}^\prime}}(\lambda ))}^{i}.{(f_{Z^s_{\mu^{\prime\prime}}}(\lambda))}^{n-i}{(\lambda-\chi_{Z^s_{{\mu}^{\prime}}}(\lambda))}^{2i-n}q_{G}\left(\sqrt{(\lambda-\chi_{Z^s_{\mu^\prime}(\lambda)})(\lambda-\chi_{Z^s_{\mu^{\prime\prime}}(\lambda)})}\right) & if ~~ n \le 2i
        \end{cases}
    \end{equation*}
    \end{theorem}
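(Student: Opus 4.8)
The plan is to combine Theorem~\ref{lm:char_polynomial} with the bipartite block structure of $A(G^s_\mu)$, extracting the two cases from the two equivalent Schur-complement expansions in Lemma~\ref{schur_complement}. First I would apply Theorem~\ref{lm:char_polynomial} to obtain
$$f_{G^s_{\mu}\circ \overset{n}{\underset{l=1}{ \Lambda}}H^s_{\mu_l}}(\lambda)=\left(\prod_{l=1}^{n} f_{H^s_{\mu_l}}(\lambda)\right)\cdot g\big(\chi_{H^s_{\mu_1}}(\lambda),\dots,\chi_{H^s_{\mu_n}}(\lambda);G^s_\mu\big).$$
Since $H^s_{\mu_1}\cong\dots\cong H^s_{\mu_i}\cong Z^s_{\mu'}$ and $H^s_{\mu_{i+1}}\cong\dots\cong H^s_{\mu_n}\cong Z^s_{\mu''}$, Proposition~\ref{lm:isomorphic_equalcoronal} gives $\chi_{H^s_{\mu_l}}(\lambda)=\chi_{Z^s_{\mu'}}(\lambda)$ for $l\le i$ and $\chi_{H^s_{\mu_l}}(\lambda)=\chi_{Z^s_{\mu''}}(\lambda)$ for $l>i$, while the same isomorphisms collapse the product to $\big(f_{Z^s_{\mu'}}(\lambda)\big)^{i}\big(f_{Z^s_{\mu''}}(\lambda)\big)^{n-i}$.

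Next, writing $a=\lambda-\chi_{Z^s_{\mu'}}(\lambda)$ and $b=\lambda-\chi_{Z^s_{\mu''}}(\lambda)$, the diagonal matrix appearing in $g$ has its first $i$ diagonal entries equal to $a$ and its remaining $n-i$ entries equal to $b$. Invoking the bipartite block form of $A(G^s_\mu)$ from Notation~\ref{notation_4.1}, in which the off-diagonal block is $W$ of size $i\times(n-i)$, I would rewrite
$$g=\det\left(\left[\begin{matrix} aI_i & -W \\ -W^T & bI_{n-i}\end{matrix}\right]\right),$$
interpreted over the rational function field $\C(\lambda)$, where $a$ and $b$ are nonzero and hence the scalar diagonal blocks are invertible. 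Applying Lemma~\ref{schur_complement} by eliminating the lower-right block gives $g=b^{\,n-2i}\det\!\big(ab\,I_i-WW^T\big)$, whereas eliminating the upper-left block gives $g=a^{\,2i-n}\det\!\big(ab\,I_{n-i}-W^TW\big)$.

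Finally I would identify the surviving determinants with the polynomials of Notation~\ref{notation_4.1}. Because $g_{G^s_\mu}(\lambda)=\det(\lambda^2 I_i-WW^T)$ and $q_{G^s_\mu}(\lambda)=\det(\lambda^2 I_{n-i}-W^TW)$ involve $\lambda$ only through $\lambda^2$, the substitution $\lambda^2\mapsto ab$ is well defined and yields $\det(ab\,I_i-WW^T)=g_{G^s_\mu}\big(\sqrt{ab}\big)$ and $\det(ab\,I_{n-i}-W^TW)=q_{G^s_\mu}\big(\sqrt{ab}\big)$, with $\sqrt{ab}=\sqrt{(\lambda-\chi_{Z^s_{\mu'}}(\lambda))(\lambda-\chi_{Z^s_{\mu''}}(\lambda))}$. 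Multiplying by the product factor recovers the two displayed expressions. I expect the only delicate points to be justifying that the radical is merely shorthand for a genuine polynomial in $ab$ (which rests on $g_{G^s_\mu}$ and $q_{G^s_\mu}$ being even functions of $\lambda$), and keeping the exponent bookkeeping $b^{\,n-2i}$ versus $a^{\,2i-n}$ matched to the regimes $n\ge 2i$ and $n\le 2i$ respectively.
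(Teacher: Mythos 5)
Your proposal is correct and follows essentially the same route as the paper: invoke the factorization of Theorem~\ref{lm:char_polynomial}, collapse the coronals and characteristic polynomials via the isomorphisms, use the bipartite block form of $A(G^s_\mu)$ to write $g$ as a two-by-two block determinant, and apply both Schur-complement expansions of Lemma~\ref{schur_complement} together with Notation~\ref{notation_4.1} to obtain the two cases. In fact your write-up is slightly more careful than the paper's, since you make the exponent bookkeeping explicit and justify the $\lambda^2\mapsto ab$ substitution by noting that $g_{G^s_\mu}$ and $q_{G^s_\mu}$ are even in $\lambda$.
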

\begin{proof}
    Based on the identical argument presented in the proof of Theorem \ref{lm:char_polynomial}, it follows that.
    \begin{eqnarray}
f_{G^s_\mu\circ \overset{n}{\underset{l=1}{ \Lambda}}H^s_{\mu_l}}(\lambda) 
	&=&  \left(\prod_{l=1}^{n} f_{{H^s_{\mu_l}}}(\lambda)\right)\cdot \det \left( \bmatrix{
		{\lambda-\chi}_{{H^s_{\mu_1}}}(\lambda)& 0 & 0 \\
		0& \ddots & 0 \\
		0 & 0 &{\lambda-\chi}_{{H^s_{\mu_n}}}(\lambda)}-A(G)\right)  \nonumber \\   
			&=&  \left(f_{Z^s_{{\mu}^\prime}}(\lambda)\right)^i \cdot \left(f_{Z^s_{{\mu}^{\prime\prime}}}(\lambda)\right)^{n-i} \cdot \det \left( \bmatrix{
		{\lambda-\chi}_{{Z^s_{\mu^\prime}}}(\lambda)& 0 \\ \\
		0 & {\lambda-\chi}_{{Z^s_{\mu^{\prime\prime}}}}(\lambda)}-A(G)\right).\nonumber
		\end{eqnarray}
			
		Since $G^s_\mu$ is bipartite, there is a matrix $W$ of order $i \times (n-i)$ such that 
	$$f_{G^s_\mu\circ \overset{n}{\underset{l=1}{ \Lambda}}H^s_{\mu_l}}(\lambda) 
		=  \left(f_{Z^s_{{\mu}^\prime}}(\lambda)\right)^i \cdot (f_{Z^s_{{\mu}^{\prime\prime}}}(\lambda))^{n-i} \cdot \det \bmatrix{
		{\lambda-\chi}_{{Z^s_{\mu^\prime}}}(\lambda)& -W \\ \\
		-W^{T} & {\lambda-\chi}_{{Z^s_{\mu^{\prime\prime}}}}(\lambda)}.$$

By utilizing Lemma \ref{schur_complement} and Notation 4, we can derive that.
 \newline
    \begin{equation*}
    f_{G^s_{\mu}\circ \overset{n}{\underset{l=1}{ \Lambda}}H^s_{\mu_l}}(\lambda) =
        \begin{cases}
            {(f_{Z^s_{{\mu}^\prime}}(\lambda))}^{i}.{(f_{Z^s_{\mu^{\prime\prime}}}(\lambda))}^{n-i}{(\lambda-\chi_{Z^s_{{\mu}^{\prime\prime}}}(\lambda))}^{n-2i}g_G\left(\sqrt{(\lambda-\chi_{Z^s_{\mu^\prime}(\lambda)})(\lambda-\chi_{Z^s_{\mu^{\prime\prime}}(\lambda)})}\right) & if ~~  n \ge 2i  \\ \\
            {(f_{Z^s_{{\mu}^\prime}}(\lambda ))}^{i}.{(f_{Z^s_{\mu^{\prime\prime}}}(\lambda))}^{n-i}{(\lambda-\chi_{Z^s_{{\mu}^{\prime}}}(\lambda))}^{2i-n}q_G\left(\sqrt{(\lambda-\chi_{Z^s_{\mu^\prime}(\lambda)})(\lambda-\chi_{Z^s_{\mu^{\prime\prime}}(\lambda)})}\right) & if ~~ n \le 2i 
        \end{cases} \qedhere
    \end{equation*}
		\end{proof}
To supply the subsequent corollaries, we shall now introduce the concept of the complement of a connected marked signed graph.
  \begin{definition}{\label{definition 4.2}}
      Let $G^s_{\mu_1}=(V_{\mu_1},E^s_1,\sigma_1)$ be a connected marked signed graph. The graph $G^s_{\mu_2}=(V_{\mu_2},E^s_2,\sigma_2)$ is called complement of $G^s_{\mu_1}$ if $V(G^s_{\mu_1})=V(G^s_{\mu_2})$
  with $\mu_1=\mu_2=\mu(say)$ and $E(G^s_{\mu_2})=\{uv: u, v \in V(G^s_{\mu_1}) \mbox{and u and v are not adjacent in}\,G^s_{\mu_1}\}$ such that $\sigma_2(uv)=\mu(u)\mu(v)$.
  
  \end{definition}

 It is easy to see that ${\chi_{\bar K^s_n}(\lambda)}=\frac{n}{\lambda}$, where $\bar K^s_n$ is the complement of the complete signed graph $K^s_n$ on n vertices. Therefore, the subsequent corollaries can be easily concluded from Theorem \ref{th:square_root}.
	
	\begin{corollary}
	Let $G^s_\mu(M,N)$ be a signed bipartite graph of order n, where $|M|=i$ and $|N|=n-i$. Let $H^s_{\mu_1} \cong \ldots \cong H^s_{\mu_i} \cong \bar K^s_{{m}}$ and $H^s_{\mu_{i+1}} \cong \ldots \cong H^s_{\mu_n} \cong \bar K^s_{{t}}$. Then we have
$$f_{G^s_{\mu}\circ \overset{n}{\underset{l=1}{ \Lambda}}H^s_{\mu_l}}(\lambda) = \left\{ \begin{array}{rcl}
\lambda^{mi+(n-i)t}.(\lambda-\frac{t}{\lambda})^{n-2i}g_G\left(\sqrt{(\lambda-\frac{m}{\lambda})(\lambda-    \frac{t}{\lambda})}\right) & if & n \ge 2i  \\ \\
\lambda^{mi+(n-i)t}.(\lambda-\frac{m}{\lambda})^{2i-n}q_G\left(\sqrt{(\lambda-\frac{m}{\lambda})(\lambda-    \frac{t}{\lambda})}\right) & if & n \le 2i 
\end{array}\right.$$
	\end{corollary}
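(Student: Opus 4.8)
The plan is to recognize this corollary as a direct specialization of Theorem~\ref{th:square_root} to the case where each constituent graph $H^s_{\mu_l}$ is an empty (edgeless) signed graph on $m$ or $t$ vertices. First I would invoke the stated computation $\chi_{\bar K^s_m}(\lambda) = m/\lambda$, which gives the signed coronal of the complement of the complete signed graph; applying this with the two copies $Z^s_{\mu'} \cong \bar K^s_m$ and $Z^s_{\mu''} \cong \bar K^s_t$ immediately substitutes $\chi_{Z^s_{\mu'}}(\lambda) = m/\lambda$ and $\chi_{Z^s_{\mu''}}(\lambda) = t/\lambda$ into the two-case formula of Theorem~\ref{th:square_root}.

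Next I would compute the characteristic polynomials of the empty signed graphs. Since $\bar K^s_m$ has adjacency matrix equal to the zero matrix (no edges), $f_{\bar K^s_m}(\lambda) = \det(\lambda I_m - 0) = \lambda^m$, and likewise $f_{\bar K^s_t}(\lambda) = \lambda^t$. Substituting these into the prefactor $(f_{Z^s_{\mu'}}(\lambda))^i (f_{Z^s_{\mu''}}(\lambda))^{n-i}$ yields $\lambda^{mi} \cdot \lambda^{(n-i)t} = \lambda^{mi + (n-i)t}$, which matches the common power of $\lambda$ appearing in both branches of the claimed formula. I would then handle the remaining factors branch-by-branch: in the case $n \ge 2i$ the term $(\lambda - \chi_{Z^s_{\mu''}}(\lambda))^{n-2i}$ becomes $(\lambda - t/\lambda)^{n-2i}$, while the $g_G$ argument $\sqrt{(\lambda - \chi_{Z^s_{\mu'}}(\lambda))(\lambda - \chi_{Z^s_{\mu''}}(\lambda))}$ becomes $\sqrt{(\lambda - m/\lambda)(\lambda - t/\lambda)}$; symmetrically, for $n \le 2i$ the exponent term is $(\lambda - m/\lambda)^{2i-n}$ with $q_G$ in place of $g_G$.

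There is essentially no obstacle here, since this is a pure substitution argument; the only thing to verify carefully is the bookkeeping of which coronal ($m/\lambda$ versus $t/\lambda$) attaches to which exponent and which partition index ($i$ versus $n-i$). The one point worth flagging is the consistency of the shared boundary case $n = 2i$: there the exponents $n-2i$ and $2i-n$ both vanish, so the extra factors become $1$, and by Notation~\ref{notation_4.1} we have $g_G = q_G$ when $n = 2i$, so the two branches agree. Thus I would conclude by noting that the corollary follows immediately from Theorem~\ref{th:square_root} upon inserting $f_{\bar K^s_m}(\lambda) = \lambda^m$, $f_{\bar K^s_t}(\lambda) = \lambda^t$, $\chi_{\bar K^s_m}(\lambda) = m/\lambda$, and $\chi_{\bar K^s_t}(\lambda) = t/\lambda$.
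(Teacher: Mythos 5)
Your proof is correct and takes essentially the same approach as the paper: the paper likewise obtains this corollary by noting $\chi_{\bar K^s_m}(\lambda)=\frac{m}{\lambda}$, $\chi_{\bar K^s_t}(\lambda)=\frac{t}{\lambda}$, $f_{\bar K^s_m}(\lambda)=\lambda^m$, $f_{\bar K^s_t}(\lambda)=\lambda^t$, and substituting directly into Theorem~\ref{th:square_root}. Your extra check that the two branches agree at $n=2i$ (since both exponents vanish and $g_G=q_G$ by Notation~\ref{notation_4.1}) is a small point of care the paper does not spell out.
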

	\begin{corollary}
	Let $G^s_\mu(M,N)$ be a signed bipartite graph of order n, where $|M|=i$ and $|N|=n-i$. Let $H^s_{\mu_1} \cong \ldots \cong H^s_{\mu_i} \cong \bar K^s_{{m}}$ and $H^s_{\mu_{i+1}} \cong \ldots \cong H^s_{\mu_n} \cong \phi$. Then we have
\begin{equation*}
f_{G^s_{\mu}\circ \overset{n}{\underset{l=1}{ \Lambda}}H^s_{\mu_l}}(\lambda) =
    \begin{cases}
        \lambda^{mi+n-2i}g_G\left(\sqrt{(\lambda-\frac{m}{\lambda})\lambda}\right) & if ~~ n \ge 2i  \\ \\
        \lambda^{mi}.(\lambda-\frac{m}{\lambda})^{2i-n}q_G\left(\sqrt{(\lambda-\frac{m}{\lambda})\lambda}\right) & if ~~ n \le 2i.
    \end{cases}
\end{equation*}
	\end{corollary}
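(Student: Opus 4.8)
The plan is to derive this corollary as a direct specialization of Theorem \ref{th:square_root}, taking $Z^s_{\mu'} \cong \bar K^s_m$ and $Z^s_{\mu''} \cong \phi$, where $\phi$ is the empty signed graph on zero vertices. First I would record the two graph invariants that the substitution requires. For the edgeless signed graph $\bar K^s_m$ (the complement of $K^s_m$ has no edges) the adjacency matrix is the zero matrix, so $f_{\bar K^s_m}(\lambda)=\lambda^m$, and, as already noted before the corollaries, $\chi_{\bar K^s_m}(\lambda)=\frac{m}{\lambda}$ (here the canonical marking gives $\mu[V]=\mathbf{1}_m$, so $\mu[V]^T\mu[V]=m$). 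For the empty graph $\phi$ one has $f_{\phi}(\lambda)=1$, the determinant of the $0\times 0$ matrix, and $\chi_{\phi}(\lambda)=0$, the sum over an empty index set.

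Next I would plug these four values into the two branches of Theorem \ref{th:square_root}. The prefactor $(f_{Z^s_{\mu'}}(\lambda))^{i}(f_{Z^s_{\mu''}}(\lambda))^{n-i}$ collapses to $(\lambda^m)^i\cdot 1^{\,n-i}=\lambda^{mi}$; the quantity $\lambda-\chi_{Z^s_{\mu''}}(\lambda)$ becomes $\lambda$, while $\lambda-\chi_{Z^s_{\mu'}}(\lambda)$ becomes $\lambda-\frac{m}{\lambda}$, so the common radicand $\sqrt{(\lambda-\chi_{Z^s_{\mu'}}(\lambda))(\lambda-\chi_{Z^s_{\mu''}}(\lambda))}$ turns into $\sqrt{(\lambda-\frac{m}{\lambda})\lambda}$. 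In the case $n\ge 2i$ the remaining factor $(\lambda-\chi_{Z^s_{\mu''}}(\lambda))^{n-2i}$ becomes $\lambda^{n-2i}$, and multiplying by $\lambda^{mi}$ gives the prefactor $\lambda^{mi+n-2i}$ in front of $g_G\!\left(\sqrt{(\lambda-\frac{m}{\lambda})\lambda}\right)$, which is the first branch. In the case $n\le 2i$ the factor $(\lambda-\chi_{Z^s_{\mu'}}(\lambda))^{2i-n}$ becomes $(\lambda-\frac{m}{\lambda})^{2i-n}$, and the prefactor $\lambda^{mi}(\lambda-\frac{m}{\lambda})^{2i-n}$ multiplies $q_G\!\left(\sqrt{(\lambda-\frac{m}{\lambda})\lambda}\right)$, which is the second branch. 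Both expressions agree with the claimed formula.

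The only delicate point, and the step I expect to require the most care, is the degenerate role of the empty graph $\phi$ attached to the vertices $i+1,\dots,n$ of $G^s_\mu$: I would confirm that linking an empty graph contributes no vertices and no edges to the generalized corona product, so that Theorem \ref{th:square_root} remains valid with the bookkeeping values $f_{\phi}=1$ and $\chi_{\phi}=0$, rather than re-running the Schur-complement argument of Theorem \ref{lm:char_polynomial}. Equivalently, this is precisely the limiting instance $t=0$ of the preceding corollary, and setting $t=0$ there reproduces the stated identity, which offers an immediate cross-check. Granting this observation, the result is nothing more than the substitution above, and no further construction or estimate is needed.
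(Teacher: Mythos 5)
Your proposal is correct and follows essentially the same route as the paper: the paper likewise obtains this corollary by specializing Theorem \ref{th:square_root} to $Z^s_{\mu'}\cong \bar K^s_m$ and $Z^s_{\mu''}\cong\phi$, using $\chi_{\bar K^s_m}(\lambda)=\frac{m}{\lambda}$ (and, equivalently, the $t=0$ instance of the preceding corollary, which is exactly how the paper handles the analogous Laplacian statement). Your extra care with the degenerate invariants $f_{\phi}=1$ and $\chi_{\phi}=0$ is a sensible addition but does not change the argument.
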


    \subsection{Laplacian polynomials of \texorpdfstring{$G^s_\mu \circ \overset{n}{\underset{l=1}{ \Lambda}}H^s_{\mu_l}$}{}}
    In the proof of the subsequent theorem, we utilize a notation involving a set of graphs denoted as $H^s_{\mu_1}, H^s_{\mu_2},\dots, H^s_{\mu_{n}}$, with each individual graph having orders $t_1,t_2,\dots,t_n$ respectively. The matrix V is defined to be diagonal with its diagonal entries being $t_1$ through $t_n$. $\Delta (G^s_\mu)$ denotes the diagonal matrix with its entries as the degree sequence of $G^s_\mu$. Similarly, $\Delta(H^s_{\mu_l})$ denotes the diagonal matrix with its entries as the degree sequence of $H^s_{\mu_l}$ $(1 \leq l \leq n)$. $\Delta$ is the diagonal matrix with its entries being $\Delta(H^s_{\mu_1})$ through $\Delta(H^s_{\mu_n})$.
       
\begin{theorem}{\label{th:signedLaplacianPolynomial}}
   Suppose $G^s_\mu$ is a signed graph comprising n vertices and let $H^s_{\mu_1}, H^s_{\mu_2},\dots, H^s_{\mu_{n}}$ represent $n$ signed graphs of orders $t_1,t_2,\dots,t_n$ respectively, not necessarily non-isomorphic. Then

     $$f_{L({G^s_\mu\circ \overset{n}{\underset{l=1}{ \Lambda}}H^s_{\mu_l}})}(\alpha)= \left(\prod_{l=1}^{n} f_{L({H^s_{\mu_l}})}(\alpha-1)\right) \cdot L_g({\chi}_{L({H^s_{\mu_1}})}(\alpha),\dots {\chi}_{L({H^s_{\mu_n}})}(\alpha); G^s_\mu).$$
    
    \end{theorem}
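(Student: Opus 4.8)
The plan is to mirror the proof of Theorem \ref{lm:char_polynomial}, the only genuine novelty being the correct bookkeeping of the degree matrix that distinguishes the Laplacian from the adjacency matrix. First I would write the Laplacian of the corona product in block form, using the matrices $V$, $\Delta(G^s_\mu)$, and $\Delta$ introduced above. In the corona product each center vertex $v_l$ of $G^s_\mu$ acquires $t_l$ new neighbours (all of the $l$-th copy), while each vertex of $H^s_{\mu_l}$ acquires exactly one new neighbour (the center $v_l$); since the signed degree $d = d^+ + d^-$ counts every incident edge irrespective of its sign, these degree increments are $t_l$ and $1$ respectively, independent of the signs assigned by the marking scheme. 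Hence, writing $D$ for the block-diagonal adjacency matrix of (\ref{eqn:d}) and $P, Q$ as in (\ref{eqn:pq}),
$$L\left(G^s_\mu\circ \overset{n}{\underset{l=1}{\Lambda}}H^s_{\mu_l}\right) = \bmatrix{ \Delta(G^s_\mu) + V - A(G^s_\mu) & -PQ \\ \\ -(PQ)^T & \Delta + I - D }.$$
Since $\Delta(G^s_\mu) - A(G^s_\mu) = L(G^s_\mu)$ and $\Delta - D = \diag(L(H^s_{\mu_1}), \ldots, L(H^s_{\mu_n}))$, the two diagonal blocks are $L(G^s_\mu) + V$ and $\diag(L(H^s_{\mu_l})) + I$.

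Next I would form $\alpha I - L$ and apply the Schur complement (Lemma \ref{schur_complement}), eliminating the bottom-right block, which becomes $(\alpha - 1)I - \diag(L(H^s_{\mu_l}))$. Its determinant factors as $\prod_{l=1}^n \det((\alpha - 1)I - L(H^s_{\mu_l})) = \prod_{l=1}^n f_{L(H^s_{\mu_l})}(\alpha - 1)$, which is exactly the prefactor appearing in the statement; the shift by $-1$ arises precisely from the single new edge incident to each copy vertex. The residual Schur-complement factor is $\det\bigl(\alpha I - L(G^s_\mu) - V - PQ\,M^{-1}(PQ)^T\bigr)$, where $M = (\alpha - 1)I - \diag(L(H^s_{\mu_l}))$.

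The crux is to evaluate $PQ\,M^{-1}(PQ)^T$, and this step is completely parallel to the analogous computation in Theorem \ref{lm:char_polynomial}. Because $M^{-1}$ is block-diagonal and $Q$ places $\mu_l[V_{\mu_l}]^T$ in its $l$-th block row, the product $QM^{-1}Q^T$ is the diagonal matrix whose $l$-th entry is $\mu_l[V_{\mu_l}]^T\bigl((\alpha-1)I - L(H^s_{\mu_l})\bigr)^{-1}\mu_l[V_{\mu_l}] = \chi_{L(H^s_{\mu_l})}(\alpha)$, the L-signed coronal. Conjugating by $P = \diag(\mu(v_1), \ldots, \mu(v_n))$ leaves this diagonal matrix unchanged since $\mu(v_l)^2 = 1$, so $PQ\,M^{-1}(PQ)^T = \diag(\chi_{L(H^s_{\mu_1})}(\alpha), \ldots, \chi_{L(H^s_{\mu_n})}(\alpha))$. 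Substituting and absorbing $V = \diag(t_1, \ldots, t_n)$ into the diagonal yields $\det\bigl(\diag(\alpha - t_l - \chi_{L(H^s_{\mu_l})}(\alpha)) - L(G^s_\mu)\bigr)$, which by Notation \ref{notation 3.1} is exactly $L_g(\chi_{L(H^s_{\mu_1})}(\alpha), \ldots, \chi_{L(H^s_{\mu_n})}(\alpha); G^s_\mu)$, completing the factorization. The only real obstacle is the degree accounting in the first paragraph; once the block form of $L$ is correct, the remaining algebra reproduces the adjacency computation of Theorem \ref{lm:char_polynomial} essentially verbatim.
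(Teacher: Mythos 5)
Your proposal is correct and follows essentially the same route as the paper's own proof: write $\alpha I - L$ of the corona product in block form with the degree corrections $V$ and $I$, apply the Schur complement to extract the factor $\prod_{l=1}^{n} f_{L(H^s_{\mu_l})}(\alpha-1)$, identify $PQ\,M^{-1}(PQ)^T$ as the diagonal matrix of L-signed coronals, and absorb $V$ to obtain $L_g$. The only difference is expository — you justify the degree increments ($t_l$ for centre vertices, $1$ for copy vertices) and the invariance under conjugation by $P$ more explicitly than the paper does.
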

\begin{proof}
Consider $A(G^s_\mu)$ as the adjacency matrix of graph G and $B_l$ as the adjacency matrix of graph $H^s_{\mu_l}$ for each $l=1,\dots ,n.$ Utilizing the equations (\ref{eqn:pq})-(\ref{eqn:d}), it can be deduced that

\begin{eqnarray*}
&& f_{L({G^s_\mu \circ \overset{n}{\underset{l=1}{ \Lambda}}H^s_{\mu_l}})}(\alpha)\\ \\
&=& \det \bmatrix{
		\alpha I_{n}-V-\Delta (G^s_\mu)+A(G^s_\mu) & PQ \\ \\
		(PQ)^T&  \alpha I_{{(\sum_{l=1}^nt_l) \times \sum_{l=1}^nt_l)}}-\Delta-I_{{(\sum_{l=1}^nt_l) \times \sum_{l=1}^nt_l)}}+D} \\ \\
&=& \det \bmatrix{
		\alpha I-V-\Delta (G^s_\mu)+A(G^s_\mu) & PQ \\ \\
		(PQ)^T& 	
			\bmatrix{
		(\alpha-1) I-\Delta(H^s_{\mu_1})+B_1 & 0&0 \\
		0& \ddots &0 \\
		0&0& (\alpha-1) I-\Delta(H^s_{\mu_n})+B_n
		} \\
		}\\ \\
&=& \det \bmatrix{
		\alpha I-V-L(G^s_\mu) & PQ \\ \\
		(PQ)^T& 	
			\bmatrix{
		(\alpha-1) I-L(H^s_{\mu_1})& 0&0 \\
		0& \ddots &0 \\
		0&0& (\alpha-1) I-L(H^s_{\mu_n})
		}\\
		}\\
  \end{eqnarray*}
  By applying Lemma \ref{schur_complement}, we arrive at
  \begin{eqnarray*}
&=& \left(\prod_{l=1}^{n} f_{L({H^s_{\mu_l}})}(\alpha-1)\right) \\
&& \cdot \det \left(\alpha I-V-L(G^s_\mu) -(PQ)
		\bmatrix{
	(\alpha-1)I-L(H^s_{\mu_1}) & 0 & 0 \\
		0& \ddots & 0 \\
	0 & 0 & (\alpha-1)I-L(H^s_{\mu_n})}^{-1} (PQ)^T\right)
 \end{eqnarray*}
 \begin{eqnarray*}
&=& \left(\prod_{l=1}^{n} f_{L({H^s_{\mu_l}})}(\alpha-1)\right) \\
&& \cdot \det  \left( \alpha I-V-L(G^s_\mu)-PQ
		\bmatrix{
	((\alpha-1)I-L(H^s_{\mu_1}))^{-1} & 0 & 0 \\
		0& \ddots & 0 \\
	0 & 0 & ((\alpha-1)I-L(H^s_{\mu_n}))^{-1}} (PQ)^T\right)  \\ \\
&=&   \left(\prod_{l=1}^{n} f_{L({H^s_{\mu_l}})}(\alpha-1)\right)  \cdot \det   \left( \alpha I-V-L(G^s_\mu)- \right. \\ \\
 && \left. P 
    	\bmatrix{\mu_1[V_1]^TI_{t_1}((\alpha-1)I-L(H^s_{\mu_1}))^{-1}\mu_1[V_1]I_{t_1} & 0 & 0 \\
		0& \ddots & 0 \\
	0 & 0 &\mu_1[V_n]^T I_{t_n} ( (\alpha-1)I-L(H^s_{\mu_n}))^{-1}\mu_1[V_n]}I_{t_n}P^T \right) \\ \\
&=&  \left(\prod_{l=1}^{n} f_{L({H^s_{\mu_l}})}(\alpha-1)\right)  \cdot \det \left( \alpha I-V-L(G^s_\mu)-\bmatrix{
		{\chi}_{L({H^s_{\mu_1}})}(\alpha)& 0 & 0 \\
		0& \ddots & 0 \\
	0 & 0 &{\chi}_{L({H^s_{\mu_n}})}(\alpha)} \right)  \\ \\
&=&  \left(\prod_{l=1}^{n} f_{L({H^s_{\mu_l}})}(\alpha-1)\right) \cdot \det \left( \bmatrix{
		\alpha -t_1 -{\chi}_{L({H^s_{\mu_1}})}(\alpha)& 0 & 0 \\
		0& \ddots & 0 \\
	0 & 0 &\alpha -t_n-{\chi}_{L({H^s_{\mu_n}})}(\alpha)} -L(G^s_\mu)\right)
 \end{eqnarray*}
  Under the notation given in Notation \ref{notation 3.1}, the result is that
  \begin{equation*}
f_{L({G^s_\mu\circ \overset{n}{\underset{l=1}{ \Lambda}}H^s_{\mu_l}})}(\alpha)= \left(\prod_{l=1}^{n} f_{L({H^s_{\mu_l}})}(\alpha-1)\right) \cdot L_g({\chi}_{L({H^s_{\mu_1}})}(\alpha),\dots, {\chi}_{L({H^s_{\mu_n}})}(\alpha); G^s_\mu) \qedhere
\end{equation*} 
\end{proof}    
\begin{corollary}
 Suppose $G^s_{\mu}$  and $H^s_{{\mu}^{\prime}}$ represent two signed graphs with n and m vertices, respectively. Then

$$f_{L({G^s_{\mu}\circ H^s_{\mu^{\prime}})}}(\alpha)=\left(f_{L(H^s_{{\mu}^{\prime}})}(\alpha-1)\right)^n \cdot f_{L(G^s_{\mu})}\left(\alpha-m- {\chi}_{L({H^s_{{\mu}^{\prime}}})}(\alpha)\right)$$
\end{corollary}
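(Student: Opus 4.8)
The plan is to recognize this corollary as the specialization of Theorem \ref{th:signedLaplacianPolynomial} to the case in which all of the constituent graphs coincide. Recall that the ordinary corona product $G^s_\mu \circ H^s_{\mu'}$ of Definition \ref{def:cpg} is exactly the generalized corona product $G^s_\mu \circ \overset{n}{\underset{l=1}{\Lambda}} H^s_{\mu_l}$ in which every copy $H^s_{\mu_l}$ is taken to be $H^s_{\mu'}$; since $H^s_{\mu'}$ has $m$ vertices, each order satisfies $t_l = m$ for $1 \le l \le n$. Thus the statement is genuinely a corollary and requires no new machinery, only the substitution of equal factors into an established formula.

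First I would substitute $H^s_{\mu_1} = \cdots = H^s_{\mu_n} = H^s_{\mu'}$ directly into the conclusion of Theorem \ref{th:signedLaplacianPolynomial}. The product over the factors $f_{L(H^s_{\mu_l})}(\alpha-1)$ then consists of $n$ identical terms and collapses to $\left(f_{L(H^s_{\mu'})}(\alpha-1)\right)^n$, which is precisely the first factor claimed in the statement. Simultaneously, all of the L-signed coronals agree, that is ${\chi}_{L(H^s_{\mu_l})}(\alpha) = {\chi}_{L(H^s_{\mu'})}(\alpha)$ for every $l$, so the hypotheses of Remark \ref{remark 3.2} are met with the common order $m$.

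Next I would invoke Remark \ref{remark 3.2}, which, under exactly the equal-coronal and equal-order conditions just verified, evaluates the polynomial $L_g$ appearing in Theorem \ref{th:signedLaplacianPolynomial} as $L_g(\ldots; G^s_\mu) = f_{L(G^s_\mu)}\left(\alpha - m - {\chi}_{L(H^s_{\mu'})}(\alpha)\right)$. Combining this evaluation with the collapsed product factor from the previous step yields the claimed identity, completing the argument.

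I do not anticipate a genuine obstacle, since both ingredients are already in hand; the only points requiring a moment of care are confirming that every copy in the corona product indeed has the same order $m$ (so that the single value $m$, rather than distinct $t_l$, appears inside the shift) and noting that the equality of the L-signed coronals is automatic here because the copies are literally the same graph rather than merely isomorphic. Even if one preferred to phrase the hypothesis in terms of isomorphic copies, the Laplacian analogue of Proposition \ref{lm:isomorphic_equalcoronal} would supply the required equality of coronals.
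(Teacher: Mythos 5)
Your proposal is correct and follows essentially the same route as the paper: take $H^s_{\mu_1} \cong \cdots \cong H^s_{\mu_n} \cong H^s_{\mu'}$, apply Theorem \ref{th:signedLaplacianPolynomial}, and invoke Remark \ref{remark 3.2} to evaluate $L_g$ as $f_{L(G^s_\mu)}\left(\alpha-m-{\chi}_{L(H^s_{\mu'})}(\alpha)\right)$. Your additional remarks on the equal orders $t_l=m$ and on the equality of coronals under isomorphism simply make explicit what the paper leaves implicit.
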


\begin{proof}
Assuming $H^s_{\mu_1} \cong \ldots \cong H^s_{\mu_n} \cong {H}^s_{\mu^\prime}$, the application of theorem \ref{th:signedLaplacianPolynomial} along with the consideration of remark \ref{remark 3.2} leads to the desired result.
\end{proof}
\begin{corollary}
     Let $G^s_{\mu}$ be a signed graph of order $n$ and $H_1,\dots,  H_n$ be $n$ signed graphs, each of them has order $m$ such that ${\chi}_{L({H^s_{\mu_1}})}(\alpha)={\chi}_{L({H^s_{\mu_2}})}(\alpha)\dots={\chi}_{L({H^s_{\mu_n}})}(\alpha)={\chi}_{L({H^s_{{\mu}^\prime}})}(\alpha)$. Then
       $$f_{L({G^s_\mu\circ \overset{n}{\underset{l=1}{ \Lambda}}H^s_{\mu_l}})}(\alpha)=\left(\prod_{l=1}^{n} f_{L({H^s_{\mu_l}})}(\alpha-1)\right).f_{L(G^s_{\mu})}\left(\alpha-m-\\ {\chi}_{L({H^s_{{\mu}^\prime}})}(\alpha)\right).$$
     \end{corollary}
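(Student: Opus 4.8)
The plan is to obtain this corollary as an immediate specialization of Theorem \ref{th:signedLaplacianPolynomial}, using Remark \ref{remark 3.2} to collapse the diagonal-determinant factor. First I would invoke Theorem \ref{th:signedLaplacianPolynomial} verbatim for the given data $G^s_\mu$ (of order $n$) and $H^s_{\mu_1},\dots,H^s_{\mu_n}$ (each of order $m$, so $t_1=\dots=t_n=m$). This yields the factorization
$$f_{L({G^s_\mu\circ \overset{n}{\underset{l=1}{ \Lambda}}H^s_{\mu_l}})}(\alpha)= \left(\prod_{l=1}^{n} f_{L({H^s_{\mu_l}})}(\alpha-1)\right) \cdot L_g({\chi}_{L({H^s_{\mu_1}})}(\alpha),\dots, {\chi}_{L({H^s_{\mu_n}})}(\alpha); G^s_\mu).$$

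Next I would check that the standing hypothesis of the corollary, namely ${\chi}_{L({H^s_{\mu_1}})}(\alpha)=\dots={\chi}_{L({H^s_{\mu_n}})}(\alpha)={\chi}_{L({H^s_{{\mu}^\prime}})}(\alpha)$ together with the common order $m$, is exactly the hypothesis required by Remark \ref{remark 3.2}. Applying that remark lets me rewrite the second factor as
$$L_g({\chi}_{L({H^s_{\mu_1}})}(\alpha),\dots, {\chi}_{L({H^s_{\mu_n}})}(\alpha); G^s_\mu)=f_{L(G^s_\mu)}\left(\alpha-m-{\chi}_{L({H^s_{{\mu}^\prime}})}(\alpha)\right),$$
since under the equal-coronal assumption the $L_g$ polynomial becomes the determinant of the scalar shift $\big(\alpha-m-{\chi}_{L({H^s_{{\mu}^\prime}})}(\alpha)\big)I - L(G^s_\mu)$, which is precisely $f_{L(G^s_\mu)}$ evaluated at $\alpha-m-{\chi}_{L({H^s_{{\mu}^\prime}})}(\alpha)$. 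Substituting this back into the factorization gives the claimed identity.

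There is essentially no hard step here: the corollary is a direct consequence of the theorem and the remark, and the only point demanding any care is bookkeeping, namely noting that the product $\prod_{l=1}^{n} f_{L({H^s_{\mu_l}})}(\alpha-1)$ is \emph{not} further simplified because the constituent graphs are assumed to share only their $L$-signed coronal, not to be mutually isomorphic, so the individual Laplacian polynomials $f_{L(H^s_{\mu_l})}$ may still differ. The remark can be applied nonetheless, because it requires only equality of the coronals and of the orders, both of which are granted. Thus the proof amounts to one line combining Theorem \ref{th:signedLaplacianPolynomial} with Remark \ref{remark 3.2}.
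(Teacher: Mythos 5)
Your proposal is correct and follows exactly the paper's own route: the paper likewise proves this corollary by combining Theorem \ref{th:signedLaplacianPolynomial} with Remark \ref{remark 3.2}, which is precisely your two-step argument spelled out in more detail.
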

     \begin{proof}
         It is clear from Theorem \ref{th:signedLaplacianPolynomial} and Remark \ref{remark 3.2}.
     \end{proof}


    \begin{corollary}
       Suppose $G^s_\mu$ represents a signed graph consisting of $n$ vertices. Let $H^s_{\mu_1}, H^s_{\mu_2},\dots, H^s_{\mu_{n}}$ be $n$ co-regular signed graphs each having an order of $m$ and a common co-regularity pair $(r, k)$. Then

        $$f_{L({G^s_\mu \circ \overset{n}{\underset{l=1}{ \Lambda}}H^s_{\mu_l}})}(\alpha)= \left(\prod_{l=1}^{n} f_{L({H^s_{\mu_l}})}(\alpha-1)\right).f_{L(G^s_\mu)}\left(\alpha-m-\frac{m}{\alpha-1-2d^{-}}\right)$$
        where $d^{-}$ is the negative degree of any vertex of $H^s_{{\mu}_{l}}$ $(1 \leq l \leq n)$.
    \end{corollary}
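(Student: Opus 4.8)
The plan is to reduce the corollary to two previously established facts---the $L$-signed coronal formula for co-regular graphs (Proposition \ref{lm:Laplacian_coregular}) and the general Laplacian polynomial factorization (Theorem \ref{th:signedLaplacianPolynomial})---bridged by Remark \ref{remark 3.2}.

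First I would verify that the hypothesis forces all $n$ of the $L$-signed coronals to coincide. Each $H^s_{\mu_l}$ is co-regular with the same pair $(r,k)$, meaning its underlying graph is $r$-regular and it is net-regular with net degree $k$. Hence every vertex satisfies $d^+ + d^- = r$ together with $d^+ - d^- = k$, which pins down the negative degree as $d^- = (r-k)/2$, a quantity determined solely by $(r,k)$ and therefore common to all the $H^s_{\mu_l}$. Applying Proposition \ref{lm:Laplacian_coregular} to each factor (with the order $m$ playing the role of the generic $n$ appearing there) yields
$$\chi_{L(H^s_{\mu_l})}(\alpha) = \frac{m}{\alpha - 1 - 2d^-}$$
for every $l = 1, \dots, n$; in particular these $n$ scalars are all equal.

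Next, since the coronals agree and every $H^s_{\mu_l}$ shares the common order $m$, Remark \ref{remark 3.2} applies and collapses the $n$-variable polynomial $L_g$ into a single evaluation of the signed Laplacian polynomial of $G^s_\mu$:
$$L_g\bigl(\chi_{L(H^s_{\mu_1})}(\alpha), \dots, \chi_{L(H^s_{\mu_n})}(\alpha); G^s_\mu\bigr) = f_{L(G^s_\mu)}\Bigl(\alpha - m - \tfrac{m}{\alpha - 1 - 2d^-}\Bigr).$$
Finally I would substitute this expression into the factorization furnished by Theorem \ref{th:signedLaplacianPolynomial}, which multiplies the right-hand side above by $\prod_{l=1}^{n} f_{L(H^s_{\mu_l})}(\alpha - 1)$, producing exactly the claimed identity.

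There is essentially no hard step here: the argument is a direct specialization of the master formula of Theorem \ref{th:signedLaplacianPolynomial}. The only point deserving genuine (if brief) attention is the first one---confirming that a common co-regularity pair really does force a common negative degree, and hence identical coronals, across all constituent graphs. This equality is precisely what licenses the invocation of Remark \ref{remark 3.2}; were the coronals distinct, the $L_g$ term would not simplify through $f_{L(G^s_\mu)}$ and one would be left with the unfactored determinant.
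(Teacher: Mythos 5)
Your proposal is correct and follows essentially the same route as the paper, which proves this corollary by combining Theorem \ref{th:signedLaplacianPolynomial} with Proposition \ref{lm:Laplacian_coregular} (with Remark \ref{remark 3.2} doing the collapse, exactly as you use it). Your added observation that a common co-regularity pair $(r,k)$ forces a common negative degree $d^- = (r-k)/2$, and hence identical $L$-signed coronals, is a detail the paper leaves implicit but is the right thing to check.
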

    \begin{proof}
        It is clear from theorem \ref{th:signedLaplacianPolynomial} and proposition \ref{lm:Laplacian_coregular}.
    \end{proof}
    The implications of Theorem \ref{th:signedLaplacianPolynomial} directly validate the assertions made in the subsequent Corollaries.
    \begin{corollary}
     Suppose $G^s_{{\mu}^\prime}$ and $G^s_{{\mu}^{\prime\prime}}$ represent two $L$-cospectral signed graphs of order $n$ and let $H^s_{\mu_1}, H^s_{\mu_2},\dots, H^s_{\mu_n}$ denote a collection of $n$ signed graphs each with order $m$. Then $G^s_{{\mu }^\prime}\circ \overset{n}{\underset{l=1}{\Lambda}}H^s_{\mu_l}$  and $G^s_{\mu^{\prime\prime}}\circ \overset{n}{\underset{l=1}{ \Lambda}}H^s_{\mu_l}$ are L-cospectral. 
\end{corollary}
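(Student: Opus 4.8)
The plan is to apply Theorem \ref{th:signedLaplacianPolynomial} to each of the two generalized corona products and reduce the claim of $L$-cospectrality to a direct comparison of the two resulting factorized polynomials. First I would write down, using Theorem \ref{th:signedLaplacianPolynomial}, the signed Laplacian polynomials
\begin{equation*}
f_{L(G^s_{\mu^\prime}\circ \overset{n}{\underset{l=1}{ \Lambda}}H^s_{\mu_l})}(\alpha)= \left(\prod_{l=1}^{n} f_{L(H^s_{\mu_l})}(\alpha-1)\right) \cdot L_g({\chi}_{L(H^s_{\mu_1})}(\alpha),\dots, {\chi}_{L(H^s_{\mu_n})}(\alpha); G^s_{\mu^\prime})
\end{equation*}
and the analogous expression with $G^s_{\mu^{\prime\prime}}$ in place of $G^s_{\mu^\prime}$. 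Since the collection $H^s_{\mu_1},\dots,H^s_{\mu_n}$ is identical in both products, the first factor $\prod_{l=1}^{n} f_{L(H^s_{\mu_l})}(\alpha-1)$ is literally the same in the two expressions, so I would only need to show that the two $L_g$ factors coincide.

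Next I would expand the $L_g$ factor using its definition in Notation \ref{notation 3.1}: it is the determinant of a matrix of the form $\mathrm{diag}(\alpha-t_1-{\chi}_{L(H^s_{\mu_1})}(\alpha),\dots,\alpha-t_n-{\chi}_{L(H^s_{\mu_n})}(\alpha)) - L(G^s_\mu)$, where $G^s_\mu$ is now the host graph. The crucial observation is that the diagonal matrix depends only on the data $t_l$ and ${\chi}_{L(H^s_{\mu_l})}(\alpha)$ coming from the $H^s_{\mu_l}$, which are common to both products, while only the term $L(G^s_{\mu^\prime})$ versus $L(G^s_{\mu^{\prime\prime}})$ differs. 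Writing $E(\alpha)$ for this common diagonal matrix, the two $L_g$ factors are $\det(E(\alpha)-L(G^s_{\mu^\prime}))$ and $\det(E(\alpha)-L(G^s_{\mu^{\prime\prime}}))$. Because $G^s_{\mu^\prime}$ and $G^s_{\mu^{\prime\prime}}$ are $L$-cospectral, their signed Laplacians $L(G^s_{\mu^\prime})$ and $L(G^s_{\mu^{\prime\prime}})$ have identical characteristic polynomials, hence the same eigenvalues with multiplicities. Since $E(\alpha)$ is a scalar matrix only in degenerate situations, I would need to argue that $\det(E(\alpha)-L(G^s_{\mu^\prime}))=\det(E(\alpha)-L(G^s_{\mu^{\prime\prime}}))$ follows from $L$-cospectrality.

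The main obstacle is precisely this last point: $L$-cospectrality gives $\det(xI-L(G^s_{\mu^\prime}))=\det(xI-L(G^s_{\mu^{\prime\prime}}))$ for a scalar shift $xI$, whereas here I am subtracting the non-scalar diagonal matrix $E(\alpha)$. To bridge this gap I would use the hypothesis that all $n$ graphs $H^s_{\mu_l}$ have the same order $m$; one then expects (and in the companion Corollary to Remark \ref{remark 3.2} this is the intended reading) that the relevant $L$-signed coronals agree, so that $t_l=m$ for every $l$ and the $L_g$ factor collapses, via Remark \ref{remark 3.2}, to a genuine scalar shift $f_{L(G^s_{\mu^\prime})}\bigl(\alpha-m-{\chi}_{L(H^s_{\mu})}(\alpha)\bigr)$. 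Setting $x=\alpha-m-{\chi}_{L(H^s_{\mu})}(\alpha)$, the $L$-cospectrality of $G^s_{\mu^\prime}$ and $G^s_{\mu^{\prime\prime}}$ then yields $f_{L(G^s_{\mu^\prime})}(x)=f_{L(G^s_{\mu^{\prime\prime}})}(x)$ as an identity of rational functions in $\alpha$. Chaining these equalities gives
\begin{equation*}
f_{L(G^s_{\mu^\prime}\circ \overset{n}{\underset{l=1}{ \Lambda}}H^s_{\mu_l})}(\alpha)=f_{L(G^s_{\mu^{\prime\prime}}\circ \overset{n}{\underset{l=1}{ \Lambda}}H^s_{\mu_l})}(\alpha),
\end{equation*}
which is exactly the claimed $L$-cospectrality, so the argument reads essentially as the $L$-analogue of the earlier adjacency cospectrality corollary.
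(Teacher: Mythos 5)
Your proposal follows exactly the route the paper takes: the paper gives no written proof at all, asserting only that this corollary is a direct implication of Theorem \ref{th:signedLaplacianPolynomial}, and your first paragraph is precisely that implication, with the correct identification of the common factor $\prod_{l=1}^{n} f_{L(H^s_{\mu_l})}(\alpha-1)$ and the reduction to comparing the two $L_g$ factors. What you add, and what the paper silently skips, is the observation that the implication is \emph{not} direct as the corollary is literally stated: $L$-cospectrality only yields $\det(xI-L(G^s_{\mu^\prime}))=\det(xI-L(G^s_{\mu^{\prime\prime}}))$ for scalar shifts, whereas the $L_g$ factor subtracts the generally non-scalar diagonal matrix $E(\alpha)=\mathrm{diag}\bigl(\alpha-m-\chi_{L(H^s_{\mu_l})}(\alpha)\bigr)$. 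Your fix, assuming the $L$-signed coronals of the $H^s_{\mu_l}$ all coincide so that Remark \ref{remark 3.2} collapses $L_g$ to the scalar-shift evaluation $f_{L(G^s_{\mu^\prime})}\bigl(\alpha-m-\chi_{L(H^s_{\mu})}(\alpha)\bigr)$, is the right one, and it mirrors how the paper's adjacency cospectrality corollary is (explicitly) hypothesized and proved.

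One caution about how you phrase the bridge: equal order $m$ does \emph{not} imply equal $L$-signed coronals for signed graphs, so this is an added hypothesis, not an ``intended reading'' that follows from the stated one. In the unsigned setting of Laali et al.\ the implication does hold, because $L(H)\mathbf{1}=0$ forces the Laplacian coronal to be $m/(\alpha-1)$, depending only on the order; this is presumably why the paper states the corollary with only an order hypothesis. For signed graphs it fails: $K_2$ with a positive edge has $\mu[V]=\mathbf{1}$, $L\mu[V]=0$ and $L$-signed coronal $2/(\alpha-1)$, while $K_2$ with a negative edge has $\mu[V]=-\mathbf{1}$, $L\mu[V]=2\mu[V]$ and $L$-signed coronal $2/(\alpha-3)$, although both have order $2$. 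So the equal-coronal assumption (or something implying it, such as the $H^s_{\mu_l}$ being mutually isomorphic, or co-regular with a common co-regularity pair) must genuinely be added; without it, the deduction from Theorem \ref{th:signedLaplacianPolynomial} breaks at exactly the non-scalar-shift point you identified, and the corollary as printed is not proved. In short: your mechanics are correct and coincide with the paper's intended argument, and your diagnosis exposes a hypothesis the paper's statement omits.
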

    
     \begin{corollary}
Suppose $G^s_\mu$ is a signed graphs of size $n$ and let $H^s_{\mu_1}, H^s_{\mu_2},\dots, H^s_{\mu_{2n}}$ be a collection of $L$-cospectral signed graphs each with order $m$. Then $G^s_\mu \circ \overset{n}{\underset{l=1}{\Lambda}} H^s_{\mu_l}$  
        and $G^s_\mu \circ \overset{2n}{\underset{l=n+1}{ \Lambda}}H^s_{\mu_l}$ are $L$-cospectral. 
        \end{corollary}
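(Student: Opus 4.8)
The plan is to apply Theorem \ref{th:signedLaplacianPolynomial} to each of the two generalized corona products and then compare the resulting factorizations term by term. Write $\Gamma_1 = G^s_\mu \circ \overset{n}{\underset{l=1}{\Lambda}} H^s_{\mu_l}$ and $\Gamma_2 = G^s_\mu \circ \overset{2n}{\underset{l=n+1}{\Lambda}} H^s_{\mu_l}$. The theorem gives
\[
f_{L(\Gamma_1)}(\alpha) = \left(\prod_{l=1}^{n} f_{L(H^s_{\mu_l})}(\alpha-1)\right) L_g\!\left(\chi_{L(H^s_{\mu_1})}(\alpha),\dots,\chi_{L(H^s_{\mu_n})}(\alpha); G^s_\mu\right),
\]
together with the analogous expression for $f_{L(\Gamma_2)}(\alpha)$ in which $l$ ranges over $n+1,\dots,2n$. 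Since the base graph $G^s_\mu$ is common to both products, it suffices to match the scalar prefactors and the $L_g$-determinants separately.

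First I would dispose of the prefactor. As $H^s_{\mu_1},\dots,H^s_{\mu_{2n}}$ are pairwise $L$-cospectral, their signed Laplacian polynomials coincide, say $f_{L(H^s_{\mu_l})}(\alpha) = f(\alpha)$ for every $l$; hence $\prod_{l=1}^{n} f_{L(H^s_{\mu_l})}(\alpha-1) = f(\alpha-1)^n = \prod_{l=n+1}^{2n} f_{L(H^s_{\mu_l})}(\alpha-1)$, so the two prefactors agree. Moreover each $H^s_{\mu_l}$ has order $m$, so every diagonal shift appearing inside $L_g$ has the form $\alpha - m - \chi_{L(H^s_{\mu_l})}(\alpha)$.

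The remaining and genuinely delicate step is the equality of the two $L_g$-determinants. By its definition in Notation \ref{notation 3.1}, $L_g$ equals $\det\bigl(\diag(\alpha - m - \chi_{L(H^s_{\mu_l})}(\alpha)) - L(G^s_\mu)\bigr)$, so it depends on each individual $L$-signed coronal $\chi_{L(H^s_{\mu_l})}(\alpha)$ and not merely on the common Laplacian spectrum. The clean route is to show that these coronals coincide: once $\chi_{L(H^s_{\mu_1})}(\alpha) = \dots = \chi_{L(H^s_{\mu_{2n}})}(\alpha) =: \chi_L(\alpha)$, Remark \ref{remark 3.2} collapses each $L_g$ factor to $f_{L(G^s_\mu)}\bigl(\alpha - m - \chi_L(\alpha)\bigr)$, the two factorizations become identical, and $f_{L(\Gamma_1)} = f_{L(\Gamma_2)}$ follows at once. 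I expect this coronal-equality to be the main obstacle and the point that must be treated with care: $L$-cospectrality pins down the eigenvalues of $L(H^s_{\mu_l})$ but not the projections of the marking vector $\mu[V_{\mu_l}]$ onto the Laplacian eigenspaces, whereas $\chi_{L(H^s_{\mu_l})}(\alpha) = \mu[V_{\mu_l}]^T\bigl((\alpha-1)I - L(H^s_{\mu_l})\bigr)^{-1}\mu[V_{\mu_l}]$ is governed by precisely those projections. Consequently the equality of $L$-signed coronals does not follow from $L$-cospectrality by itself and should be recorded as an explicit hypothesis, exactly as the adjacency analogue above imposes $\chi_{H^s_{\mu_1}}(\lambda) = \dots = \chi_{H^s_{\mu_{2n}}}(\lambda)$ outright; granting it, the argument is immediate.
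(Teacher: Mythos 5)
Your use of Theorem \ref{th:signedLaplacianPolynomial} is exactly the route the paper intends: the paper gives no separate argument for this corollary, asserting only that it is a direct implication of that theorem. Your treatment of the prefactors is the intended one, and your reduction of the problem to the equality of the two $L_g$-determinants is correct.

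Where you go beyond the paper is in refusing to treat that last equality as automatic, and you are right to refuse. Writing the spectral decomposition $L(H^s_{\mu_l})=\sum_i \lambda_i u_i u_i^T$ with orthonormal eigenvectors $u_i$, the $L$-signed coronal is $\chi_{L(H^s_{\mu_l})}(\alpha)=\sum_i \bigl(u_i^T\mu_l[V_{\mu_l}]\bigr)^2/\bigl((\alpha-1)-\lambda_i\bigr)$, which depends on the projections of the marking vector onto the Laplacian eigenspaces, while $L$-cospectrality fixes only the eigenvalues $\lambda_i$. This is a genuinely signed phenomenon: for unsigned graphs the all-ones vector is a $0$-eigenvector of the Laplacian, so the $L$-coronal equals $n/(\alpha-1)$ and is determined by the order alone, which is why the corresponding unsigned corollary of Laali et al.\ really is immediate. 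For signed graphs $\mu_l[V_{\mu_l}]$ is in general not an eigenvector of $L(H^s_{\mu_l})$ (one needs co-regularity for that, cf.\ Proposition \ref{lm:Laplacian_coregular}), so $L$-cospectral signed graphs of equal order can have different $L$-signed coronals, and then the two $L_g$-factors need not agree. Note that the paper's adjacency-matrix analogue of this corollary explicitly assumes $\chi_{H^s_{\mu_1}}(\lambda)=\dots=\chi_{H^s_{\mu_{2n}}}(\lambda)$ in addition to cospectrality; the $L$-version of that assumption is precisely what is needed here, exactly as you propose. So your argument is correct once that hypothesis is added, and the gap you have identified lies in the paper's statement and its one-line justification, not in your own reasoning.
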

 
 \begin{theorem}{\label{Theorem 5.2}}
     Let $G^s_\mu$ be a signed bipartite graph with $|V(G)|=n=2k$ where $k$ denotes the size of each part. Let the underlying graph of G be r-regular. Suppose $H^s_{\mu_1} \cong \ldots \cong H^s_{\mu_k} \cong Z^s_{{\mu}^1}$ and $H^s_{\mu_{k+1}} \cong \ldots \cong H^s_{\mu_n} \cong Z^s_{{\mu}^{2}}$. If $|V(Z^s_{\mu^1})|=m$ and $|V(Z^s_{\mu^{2}})|=s$ then
     $$f_{L({G^s_\mu \circ \overset{n}{\underset{l=1}{ \Lambda}}H^s_{\mu_l}})}(\alpha)=\prod_{j=1}^{2} ( f_{L({Z^s_{\mu^{j}}})}(\alpha-1))^k.f_{G^s_\mu}\left(\sqrt{\left(\alpha-m-r-{\chi}_{L({Z^s_{\mu^1}})}(\alpha)\right)\left(\alpha-s-r-{\chi}_{L({Z^s_{\mu^2}})}(\alpha)\right)}\right)$$
     \end{theorem}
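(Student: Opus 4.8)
The plan is to specialize the general Laplacian formula of Theorem~\ref{th:signedLaplacianPolynomial} to the present bipartite, regular situation and then collapse the resulting $2k\times 2k$ determinant by a Schur complement, in direct parallel with the adjacency argument of Theorem~\ref{th:square_root}. First I would invoke Theorem~\ref{th:signedLaplacianPolynomial}, which gives
$$f_{L(G^s_\mu\circ \overset{n}{\underset{l=1}{\Lambda}}H^s_{\mu_l})}(\alpha)=\left(\prod_{l=1}^{n}f_{L(H^s_{\mu_l})}(\alpha-1)\right)\cdot L_g(\chi_{L(H^s_{\mu_1})}(\alpha),\dots,\chi_{L(H^s_{\mu_n})}(\alpha);G^s_\mu).$$
The grouping $H^s_{\mu_1}\cong\dots\cong H^s_{\mu_k}\cong Z^s_{\mu^1}$ and $H^s_{\mu_{k+1}}\cong\dots\cong H^s_{\mu_n}\cong Z^s_{\mu^2}$ immediately factors the prefactor as $\prod_{j=1}^{2}\big(f_{L(Z^s_{\mu^j})}(\alpha-1)\big)^{k}$. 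For the $L_g$ term I would use the Laplacian analogue of Proposition~\ref{lm:isomorphic_equalcoronal} (the same permutation-conjugation argument applies verbatim to $((\alpha-1)I-L)^{-1}$), so that the $k$ coronals in the first block all equal $\chi_{L(Z^s_{\mu^1})}(\alpha)$ and those in the second block all equal $\chi_{L(Z^s_{\mu^2})}(\alpha)$, while the orders are $t_1=\dots=t_k=m$ and $t_{k+1}=\dots=t_n=s$.

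Writing $a=\alpha-m-\chi_{L(Z^s_{\mu^1})}(\alpha)$ and $b=\alpha-s-\chi_{L(Z^s_{\mu^2})}(\alpha)$, the diagonal matrix inside $L_g$ becomes $\mathrm{diag}(aI_k,bI_k)$ once the vertex ordering is chosen so that the first $k$ vertices are the part $M$ carrying $Z^s_{\mu^1}$ and the last $k$ are the part $N$ carrying $Z^s_{\mu^2}$. Next I would use that the underlying graph of $G^s_\mu$ is $r$-regular, so $\Delta(G^s_\mu)=rI_n$ and hence $L(G^s_\mu)=rI_n-A(G^s_\mu)$; together with the bipartite block form $A(G^s_\mu)=\bmatrix{0 & W\\ W^T & 0}$ from Notation~\ref{notation_4.1} (here $W$ is $k\times k$ since $n=2k$), the argument of $L_g$ is the block matrix $\bmatrix{(a-r)I_k & W\\ W^T & (b-r)I_k}$.

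Finally I would apply the Schur complement (Lemma~\ref{schur_complement}) relative to the lower-right block, obtaining $L_g=(b-r)^{k}\det\!\big((a-r)I_k-(b-r)^{-1}WW^T\big)=\det\!\big((a-r)(b-r)I_k-WW^T\big)$. Since $n=2k$ forces $n=2h$ in Notation~\ref{notation_4.1}, we have $f_{G^s_\mu}(\lambda)=g_{G^s_\mu}(\lambda)=\det(\lambda^2 I_k-WW^T)$, an even function of $\lambda$; consequently $\det\!\big((a-r)(b-r)I_k-WW^T\big)=f_{G^s_\mu}\big(\sqrt{(a-r)(b-r)}\big)$ is a well-defined rational function of $\alpha$, and substituting $a-r=\alpha-m-r-\chi_{L(Z^s_{\mu^1})}(\alpha)$ and $b-r=\alpha-s-r-\chi_{L(Z^s_{\mu^2})}(\alpha)$ yields the claimed identity.

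The only real subtlety, beyond the routine Schur-complement algebra, is checking that the Laplacian coronal is isomorphism-invariant (the analogue of Proposition~\ref{lm:isomorphic_equalcoronal}) and that the labelling genuinely aligns the two isomorphism classes with the two colour classes of the bipartition so that the diagonal really splits as $\mathrm{diag}(aI_k,bI_k)$; everything else is forced. The passage to the square root is legitimate precisely because, when $n=2k$, $f_{G^s_\mu}$ depends on $\lambda$ only through $\lambda^2$, so the substitution $\lambda^2=(a-r)(b-r)$ produces a genuine rational function of $\alpha$ despite the formal square root.
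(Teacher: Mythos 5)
Your proposal is correct and follows essentially the same route as the paper: specialize Theorem~\ref{th:signedLaplacianPolynomial}, use $r$-regularity to write $L(G^s_\mu)=rI-A(G^s_\mu)$ and bipartiteness to get the block matrix $\left[\begin{smallmatrix}(a-r)I_k & W\\ W^T & (b-r)I_k\end{smallmatrix}\right]$, then apply Lemma~\ref{schur_complement} and Notation~\ref{notation_4.1} to recognize $f_{G^s_\mu}\bigl(\sqrt{(a-r)(b-r)}\bigr)$. Your explicit verification of the isomorphism-invariance of the $L$-coronal and of the evenness of $f_{G^s_\mu}$ when $n=2k$ makes precise two points the paper leaves implicit, but the argument is the same.
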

    \begin{proof}
       Through a parallel line of reasoning as demonstrated in the theorem \ref{th:signedLaplacianPolynomial} proof we have\\
    \beano
        \small{f_{L({G^s_\mu \circ \overset{n}{\underset{l=1}{ \Lambda}}H^s_{\mu_l}})}}(\alpha)  =  \prod_{j=1}^{2} {( f_{L({Z^s_{\mu^{j}}})} (\alpha-1))}^k.det \left( \left[ \begin{matrix}
        		(\alpha-m-r-{\chi}_{L({Z^s_{\mu^1}})}(\alpha))I_k & 0 \\ \\
        		0 &  (\alpha-s-r-{\chi}_{L({Z^s_{\mu^2}})}(\alpha))I_k \\
        \end{matrix} \right] + A(G^s_\mu) \right)
        \eeano 
Due to the bipartite nature of $G^s_{\mu}$, the existence of a $k$-order matrix $W$ is guaranteed such that \\
    \begin{equation*}
f_{L({G^s_\mu \circ \overset{n}{\underset{l=1}{ \Lambda}}H^s_{\mu_l}})}(\alpha)  = \prod_{j=1}^{2} ( f_{L({Z^s_{\mu^{j}}})}(\alpha-1))^k.det \left(\left[ \begin{matrix}
		(\alpha-m-r-{\chi}_{L({Z^s_{\mu^1}})}(\alpha))I_k & W \\ \\
		W^T &  (\alpha-s-r-{\chi}_{L({Z^s_{\mu^2}})}(\alpha))I_k \\
			\end{matrix} \right]\right) 
			\end{equation*}
Using both lemma \ref{schur_complement} and notation \ref{notation_4.1} we obtain that
\begin{equation*}
     f_{L({G^s_\mu \circ \overset{n}{\underset{l=1}{ \Lambda}}H^s_{\mu_l}})}(\alpha)=\prod_{j=1}^{2} ( f_{L({Z^s_{\mu^{j}}})}(\alpha-1))^k.f_{G^s_\mu}\left(\sqrt{\left(\alpha-m-r-{\chi}_{L({Z^s_{\mu^1}})}(\alpha)\right)\left(\alpha-s-r-{\chi}_{L({Z^s_{\mu^2}})}(\alpha)\right)}\right)   \qedhere
     \end{equation*}
   
\end{proof} 		
		
	\begin{corollary}  {\label{corollary 5.2.1}}
     Let $G^s_\mu$ be a signed co-regular bipartite graph of co-regularity pair $(r,k)$ and with $|V(G)|=n=2k$ where $k$ denotes the
size of each part. Suppose $H^s_{\mu_1} \cong \ldots \cong H^s_{\mu_k} \cong Z^s_{{\mu}^1}$ and $H^s_{\mu_{k+1}} \cong \ldots \cong H^s_{\mu_n} \cong Z^s_{{\mu}^{2}}$. If $|V(Z^s_{\mu^1})|=m$ and $|V(Z^s_{\mu^{2}})|=s$ then
     $$f_{L({G^s_\mu \circ \overset{n}{\underset{l=1}{ \Lambda}}H^s_{\mu_l}})}(\alpha)=\prod_{j=1}^{2} ( f_{L({Z^s_{\mu^{j}}})}(\alpha-1))^k.f_{G^s_\mu}\left(\sqrt{\left(\alpha-m-r-\frac{m}{\alpha-1-2{d^-_1}}\right)\left(\alpha-s-r-\frac{s}{\alpha-1-2{d^-_2}}\right)}\right)$$
     where ${d}^-_1$ and ${d}^-_2$ are the negative degree of any vertex of $Z^s_{\mu^1}$ and $Z^s_{\mu^2}$ respectively. 
     \end{corollary}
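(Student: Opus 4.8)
The plan is to obtain this corollary as a direct specialization of Theorem \ref{Theorem 5.2}, in which the two abstract $L$-signed coronals $\chi_{L(Z^s_{\mu^1})}(\alpha)$ and $\chi_{L(Z^s_{\mu^2})}(\alpha)$ are replaced by the closed-form expressions furnished by Proposition \ref{lm:Laplacian_coregular}. First I would check that the hypotheses of Theorem \ref{Theorem 5.2} are met: $G^s_\mu$ is a signed bipartite graph on $n=2k$ vertices with equal parts, and its underlying graph is $r$-regular (this being recorded in the co-regularity pair $(r,k)$), so the theorem applies verbatim and gives
$$f_{L(G^s_\mu \circ \overset{n}{\underset{l=1}{\Lambda}} H^s_{\mu_l})}(\alpha) = \prod_{j=1}^{2}\bigl(f_{L(Z^s_{\mu^j})}(\alpha-1)\bigr)^k \cdot f_{G^s_\mu}\!\left(\sqrt{\bigl(\alpha - m - r - \chi_{L(Z^s_{\mu^1})}(\alpha)\bigr)\bigl(\alpha - s - r - \chi_{L(Z^s_{\mu^2})}(\alpha)\bigr)}\right).$$

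Next I would invoke the co-regularity of the constituent graphs. Since $Z^s_{\mu^1}$ is a co-regular signed graph of order $m$ whose every vertex has negative degree $d^-_1$, Proposition \ref{lm:Laplacian_coregular} yields $\chi_{L(Z^s_{\mu^1})}(\alpha) = m/(\alpha - 1 - 2d^-_1)$; likewise $\chi_{L(Z^s_{\mu^2})}(\alpha) = s/(\alpha - 1 - 2d^-_2)$ because $Z^s_{\mu^2}$ is co-regular of order $s$ with uniform negative degree $d^-_2$. The only thing to verify here is that Proposition \ref{lm:Laplacian_coregular} is genuinely applicable, i.e. that the marking vector is a Laplacian eigenvector, $L(Z^s_{\mu^j})\,\mu[V] = 2d^-_j\,\mu[V]$; this is exactly the co-regularity computation already carried out in the proof of that proposition, so no new work is required.

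Substituting these two coronal values term-by-term into the arguments of $f_{G^s_\mu}$ in the displayed formula immediately produces the claimed identity. I do not expect any genuine obstacle: the radical is carried through unchanged and the substitution is purely formal, so the statement is a one-step corollary. The only point demanding mild care is the notational coincidence that $k$ denotes both the size of each bipartition class and the second entry of the co-regularity pair $(r,k)$; the exponent $k$ in $\bigl(f_{L(Z^s_{\mu^j})}(\alpha-1)\bigr)^k$ should be read as the part size inherited from Theorem \ref{Theorem 5.2}, not as a net-degree.
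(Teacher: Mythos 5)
Your proposal matches the paper's own proof exactly: the paper derives this corollary as an immediate consequence of Theorem \ref{Theorem 5.2} combined with Proposition \ref{lm:Laplacian_coregular}, which is precisely your substitution of the closed-form $L$-signed coronals $\frac{m}{\alpha-1-2d^-_1}$ and $\frac{s}{\alpha-1-2d^-_2}$ into the bipartite Laplacian formula. Your additional remarks on verifying the $r$-regularity hypothesis and on the notational overloading of $k$ are sound and go slightly beyond what the paper records, but the route is the same.
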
	
     \begin{proof}
        It is straightforward from theorem \ref{Theorem 5.2} and proposition \ref{lm:Laplacian_coregular}. 
     \end{proof}

 \begin{corollary}  {\label{corollary 5.2.2}}
        Let $G^s_\mu$ be a signed co-regular bipartite graph of co-regularity pair $(r, k)$ and with $|V(G)|=n=2k$ where $k$ denotes the
size of each part. Suppose $H^s_{\mu_1} \cong \ldots \cong H^s_{\mu_k} \cong \bar{K}_m$ and $H^s_{\mu_{k+1}} \cong \ldots \cong H^s_{\mu_n} \cong \bar{K}_s$. Then the following holds:
        $$f_{L({G^s_\mu \circ \overset{n}{\underset{l=1}{ \Lambda}}H^s_{\mu_l}})}(\alpha) = {(\alpha -1)}^{k(m+s)}.f_{G^s_\mu }\left( \sqrt{\left( \alpha-m-r-\frac{m}{\alpha-1}\right)\left(\alpha-s-r-\frac{s}{\alpha-1}\right)}\right).$$
    \end{corollary}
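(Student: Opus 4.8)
The plan is to obtain this statement as a direct specialization of Theorem \ref{Theorem 5.2}, taking $Z^s_{\mu^1} \cong \bar{K}_m$ and $Z^s_{\mu^2} \cong \bar{K}_s$. Since that theorem already supplies the full factorization for an arbitrary bipartite co-regular $G^s_\mu$ of co-regularity pair $(r,k)$ with two families of identical pendant graphs, the only genuine work left is to evaluate, for the edgeless signed graph $\bar{K}_m$, the two constituent quantities occurring in the theorem: the signed Laplacian polynomial $f_{L(\bar{K}_m)}(\alpha-1)$ and the $L$-signed coronal $\chi_{L(\bar{K}_m)}(\alpha)$ (and likewise for $\bar{K}_s$).

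First I would record the Laplacian data of $\bar{K}_m$. Being the complement of the complete signed graph (Definition \ref{definition 4.2}), $\bar{K}_m$ has no edges, so both its adjacency matrix and its degree matrix vanish and its Laplacian $L(\bar{K}_m)=D-A$ is the $m\times m$ zero matrix. Consequently $f_{L(\bar{K}_m)}(\alpha-1)=\det\big((\alpha-1)I_m\big)=(\alpha-1)^m$, and similarly $f_{L(\bar{K}_s)}(\alpha-1)=(\alpha-1)^s$. Raising each to the $k$-th power and multiplying yields the prefactor $(\alpha-1)^{mk}(\alpha-1)^{sk}=(\alpha-1)^{k(m+s)}$, matching the stated leading factor.

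Next I would compute the $L$-signed coronal. Because $\bar{K}_m$ is edgeless its negative degree is $d^-=0$, and it is (trivially) co-regular, so Proposition \ref{lm:Laplacian_coregular} gives $\chi_{L(\bar{K}_m)}(\alpha)=\frac{m}{\alpha-1-2\cdot 0}=\frac{m}{\alpha-1}$; equivalently, one sees this directly from $\mu[V_\mu]^T\big((\alpha-1)I\big)^{-1}\mu[V_\mu]=\frac{\mu[V_\mu]^T\mu[V_\mu]}{\alpha-1}=\frac{m}{\alpha-1}$, using $\mu[V_\mu]^T\mu[V_\mu]=m$. The same computation gives $\chi_{L(\bar{K}_s)}(\alpha)=\frac{s}{\alpha-1}$.

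Finally I would substitute these four values into the formula of Theorem \ref{Theorem 5.2}: the product of Laplacian polynomials becomes $(\alpha-1)^{k(m+s)}$, while inside the square root $\chi_{L(Z^s_{\mu^1})}(\alpha)$ and $\chi_{L(Z^s_{\mu^2})}(\alpha)$ are replaced by $\frac{m}{\alpha-1}$ and $\frac{s}{\alpha-1}$ respectively, producing exactly the claimed identity. There is no real obstacle here, as the argument is a routine instantiation; the only point demanding minor care is verifying that the complement $\bar{K}_m$ has vanishing Laplacian, so that both $f_{L(\bar{K}_m)}(\alpha-1)=(\alpha-1)^m$ and $\chi_{L(\bar{K}_m)}(\alpha)=\frac{m}{\alpha-1}$ hold, which follows immediately from Definition \ref{definition 4.2}.
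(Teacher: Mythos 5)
Your proposal is correct and follows essentially the same route as the paper: the paper derives this corollary from Corollary \ref{corollary 5.2.1} (which is just Theorem \ref{Theorem 5.2} combined with Proposition \ref{lm:Laplacian_coregular}) together with Definition \ref{definition 4.2}, which is exactly your chain of reasoning with the intermediate corollary unwound. Your explicit verification that $\bar{K}_m$ has zero Laplacian, hence $f_{L(\bar{K}_m)}(\alpha-1)=(\alpha-1)^m$ and $\chi_{L(\bar{K}_m)}(\alpha)=\frac{m}{\alpha-1}$, is the same specialization the paper leaves implicit.
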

    \begin{proof}
It is a direct consequence of corollary \ref{corollary 5.2.1} and definition \ref{definition 4.2}.
    \end{proof}

    \begin{corollary}
        Let $G^s_\mu$ be a signed co-regular bipartite graph of co-regularity pair $(r, k)$ and with $|V(G)|=n=2k$ where k is the size of each part. Let $H^s_{\mu_1} \cong \ldots \cong H^s_{\mu_k} \cong \bar{K}_m$ and $H^s_{\mu_{k+1}} \cong \ldots \cong H^s_{\mu_n} \cong \phi$. Then the following holds:
        $$f_{L({G^s_\mu \circ \overset{n}{\underset{l=1}{ \Lambda}}H^s_{\mu_l}})}(\alpha)=(\alpha-1))^{km}.f_{G^s_\mu}\left(\sqrt{\left(\alpha-m-r-\frac{m}{\alpha-1}\right)\left(\alpha-r\right)}\right).$$
    \end{corollary}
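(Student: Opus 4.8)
The plan is to specialize the bipartite Laplacian formula of Corollary \ref{corollary 5.2.2} to the degenerate case where the second family of attached graphs is empty. Here $\phi$ denotes the null (empty) signed graph on zero vertices, so $s=0$ and there is no $Z^s_{\mu^2}$ factor to compute. The whole argument is therefore a matter of taking the limit $s\to 0$ inside the statement of Corollary \ref{corollary 5.2.2} and checking that each term collapses correctly.

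First I would record the starting point: by Corollary \ref{corollary 5.2.2}, with $H^s_{\mu_1}\cong\cdots\cong H^s_{\mu_k}\cong\bar K_m$ and the remaining $k$ factors replaced by copies of $\bar K_s$, we have
\begin{equation*}
f_{L({G^s_\mu \circ \overset{n}{\underset{l=1}{ \Lambda}}H^s_{\mu_l}})}(\alpha) = (\alpha-1)^{k(m+s)}\, f_{G^s_\mu}\!\left(\sqrt{\left(\alpha-m-r-\tfrac{m}{\alpha-1}\right)\left(\alpha-s-r-\tfrac{s}{\alpha-1}\right)}\right).
\end{equation*}
Setting $s=0$ turns the exponent $k(m+s)$ into $km$, which gives the claimed prefactor $(\alpha-1)^{km}$. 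For the second factor in the product inside $f_{G^s_\mu}$, substituting $s=0$ yields $\alpha-0-r-\tfrac{0}{\alpha-1}=\alpha-r$, exactly the expression appearing under the square root in the target statement. The first factor $\alpha-m-r-\tfrac{m}{\alpha-1}$ is untouched by the substitution, so the argument of $f_{G^s_\mu}$ becomes $\sqrt{\left(\alpha-m-r-\tfrac{m}{\alpha-1}\right)(\alpha-r)}$, precisely as asserted.

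The one point that warrants a genuine check, and which I expect to be the only real obstacle, is the interpretation of attaching the empty graph $\phi$ in the corona construction: linking a vertex of $G^s_\mu$ to every vertex of a graph with no vertices adds no new edges and no new nodes, so those $k$ vertices of $G^s_\mu$ simply retain their original Laplacian contribution with no coronal correction. Concretely, in the proof of Theorem \ref{Theorem 5.2} the diagonal block corresponding to the $Z^s_{\mu^2}$ side carries the entry $\alpha-s-r-{\chi}_{L({Z^s_{\mu^2}})}(\alpha)$; one must verify that when $Z^s_{\mu^2}=\phi$ both $s=0$ and ${\chi}_{L(\phi)}(\alpha)=0$ (the coronal of an empty graph is the sum over an empty index set, hence zero), so this entry degenerates to $\alpha-r$ with no spurious pole. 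This confirms that the $s\to 0$ specialization is legitimate and not merely formal.

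Thus the full argument is: invoke Corollary \ref{corollary 5.2.2}, observe that the hypotheses match except for the replacement of $\bar K_s$ by $\phi$, note that $\phi$ corresponds to $s=0$ together with a vanishing $L$-signed coronal, and substitute $s=0$ into the displayed formula. Each of the three pieces — the prefactor, the first radicand, and the second radicand — simplifies as computed above, yielding
\begin{equation*}
f_{L({G^s_\mu \circ \overset{n}{\underset{l=1}{ \Lambda}}H^s_{\mu_l}})}(\alpha)=(\alpha-1)^{km}\, f_{G^s_\mu}\!\left(\sqrt{\left(\alpha-m-r-\tfrac{m}{\alpha-1}\right)\left(\alpha-r\right)}\right),
\end{equation*}
which is the desired identity.
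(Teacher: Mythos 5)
Your proposal is correct and follows exactly the paper's own route: the paper proves this corollary by invoking Corollary \ref{corollary 5.2.2} with $s=0$, which is precisely your argument. Your additional verification that the empty graph $\phi$ genuinely corresponds to $s=0$ with vanishing $L$-signed coronal (so the substitution is not merely formal) is a careful touch the paper omits, but it does not change the approach.
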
	
    \begin{proof}
    It can be easily obtained by corollary \ref{corollary 5.2.2} for $s=0$.    
    \end{proof}
     \subsection{Signless Laplacian polynomials of \texorpdfstring{$G^s_\mu \circ \overset{n}{\underset{l=1}{ \Lambda}}H^s_{\mu_l}$}{}}
\begin{theorem}   {\label{signlesslaplacianpolynomial}}
   Suppose $G^s_\mu$ is a signed graph comprising n vertices and let $H^s_{\mu_1}, H^s_{\mu_2},\dots, H^s_{\mu_{n}}$ represent $n$ signed graphs of orders $t_1,t_2,\dots,t_n$ respectively, not necessarily non-isomorphic. Then
     $$f_{Q({G^s_\mu\circ \overset{n}{\underset{l=1}{ \Lambda}}H^s_{\mu_l}})}(\beta)=(\prod_{l=1}^{n} f_{Q({H^s_{\mu_l}})}(\beta-1)).Q_g({\chi}_{Q({H^s_{\mu_1}})}(\beta),\dots, {\chi}_{Q({H^s_{\mu_n}})}(\beta); G^s_\mu).$$
    \end{theorem}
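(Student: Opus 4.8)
The plan is to follow the same matrix-factorization strategy used in the proof of Theorem \ref{th:signedLaplacianPolynomial}, replacing the signed Laplacian $L=\Delta-A$ throughout by the signed signless Laplacian $Q=\Delta+A$. First I would write the signless Laplacian of the corona product in block form. Using the block decomposition in equations (\ref{eqn:pq})--(\ref{eqn:d}), the degree of each vertex $v_l$ of $G^s_\mu$ increases by $t_l$ while the degree of every vertex in the $l$-th copy of $H^s_{\mu_l}$ increases by $1$, so
$$Q\left(G^s_\mu\circ \overset{n}{\underset{l=1}{ \Lambda}}H^s_{\mu_l}\right)= \left[ \begin{matrix} V + Q(G^s_\mu) & PQ \\ (PQ)^T & I + \Delta + D \end{matrix} \right],$$
where $V=\diag(t_1,\dots,t_n)$ and $\Delta+D$ is block diagonal with blocks $Q(H^s_{\mu_l})=\Delta(H^s_{\mu_l})+B_l$.

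Then $f_{Q(\cdot)}(\beta)=\det(\beta I-Q(\cdot))$ has $(1,1)$ block $\beta I-V-Q(G^s_\mu)$, off-diagonal blocks $-PQ$ and $-(PQ)^T$, and $(2,2)$ block that is block diagonal with entries $(\beta-1)I-Q(H^s_{\mu_l})$. Applying Lemma \ref{schur_complement} to the $(2,2)$ block factors out $\det\!\left((\beta-1)I-Q(H^s_{\mu_l})\right)=f_{Q(H^s_{\mu_l})}(\beta-1)$ over $l$, giving the product $\prod_{l=1}^{n} f_{Q(H^s_{\mu_l})}(\beta-1)$ times the determinant of the Schur complement $\beta I-V-Q(G^s_\mu)-PQ\,\left[(\beta-1)I-(\Delta+D)\right]^{-1}(PQ)^T$.

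The next step is to evaluate the correction term $PQ\,\left[(\beta-1)I-(\Delta+D)\right]^{-1}(PQ)^T$. Exploiting the block-diagonal structure of $P$, $Q$, and $\Delta+D$, this matrix is diagonal, with $l$-th diagonal entry $\mu(v_l)^2\,\mu_l[V_{\mu_l}]^T\left((\beta-1)I-Q(H^s_{\mu_l})\right)^{-1}\mu_l[V_{\mu_l}]$. Since $\mu(v_l)^2=1$, this is exactly the $Q$-signed coronal $\chi_{Q(H^s_{\mu_l})}(\beta)$. Substituting yields the determinant of $\diag\!\left(\beta-t_l-\chi_{Q(H^s_{\mu_l})}(\beta)\right)-Q(G^s_\mu)$, which is precisely $Q_g(\chi_{Q(H^s_{\mu_1})}(\beta),\dots,\chi_{Q(H^s_{\mu_n})}(\beta); G^s_\mu)$ by Notation \ref{notation 3.1}, completing the proof.

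I expect the only delicate point to be the sign bookkeeping that separates this from the Laplacian case. Because $Q=\Delta+A$ rather than $\Delta-A$, the off-diagonal blocks of $\beta I-Q$ carry a minus sign on $PQ$; one must verify that the two negative factors from $A_{12}=-PQ$ and $A_{21}=-(PQ)^T$ cancel inside $A_{12}A_{22}^{-1}A_{21}$ so that the $Q$-signed coronals enter with the correct positive sign, and that the $+I$ shift in the $(2,2)$ block produces the argument $\beta-1$ (not $\beta$) in $f_{Q(H^s_{\mu_l})}$ and in the coronal. Everything else reduces to the routine block-matrix manipulation already performed in the Laplacian computation.
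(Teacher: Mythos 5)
Your proof is correct and is precisely the argument the paper intends: the paper's own proof of Theorem \ref{signlesslaplacianpolynomial} merely states that it is similar to Theorem \ref{th:signedLaplacianPolynomial}, and your write-up carries out exactly that Schur-complement factorization with $Q = \Delta + A$ in place of $L = \Delta - A$. Your sign bookkeeping is also right: the minus signs on the off-diagonal blocks of $\beta I - Q\left(G^s_\mu\circ \overset{n}{\underset{l=1}{\Lambda}}H^s_{\mu_l}\right)$ cancel inside $A_{12}A_{22}^{-1}A_{21}$, so the $Q$-signed coronals are subtracted on the diagonal, yielding $\beta - t_l - \chi_{Q(H^s_{\mu_l})}(\beta)$ exactly as required by Notation \ref{notation 3.1}.
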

\begin{proof}
The proof is similar to Theorem \ref{th:signedLaplacianPolynomial}.
\end{proof}    
\begin{corollary}
 Suppose $G^s_{\mu}$  and $H^s_{{\mu}^{\prime}}$ represent two signed graphs with n and m vertices, respectively. Then

$$f_{Q({G^s_{\mu}\circ H^s_{\mu^\prime}})} (\beta) = \left( f_{Q(H^s_{{\mu}^\prime})}(\beta-1)\right)^n.f_{Q(G^s_{\mu})}\left(\beta-m-\\ {\chi}_{Q({H^s_{{\mu}^\prime}})}(\beta)\right)$$
\end{corollary}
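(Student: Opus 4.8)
The plan is to recognize that this corollary is the signless-Laplacian analogue of the corresponding corollaries already established for the characteristic polynomial (following Theorem~\ref{lm:char_polynomial}) and the Laplacian polynomial (following Theorem~\ref{th:signedLaplacianPolynomial}), and to prove it by exactly the same template. The key observation is that the ordinary corona product $G^s_\mu \circ H^s_{\mu^\prime}$ is nothing but the special case of the generalized corona product in which every factor graph is a copy of a single signed graph; that is, we set $H^s_{\mu_1} \cong \ldots \cong H^s_{\mu_n} \cong H^s_{\mu^\prime}$, so that $G^s_\mu \circ \overset{n}{\underset{l=1}{\Lambda}} H^s_{\mu_l} = G^s_\mu \circ H^s_{\mu^\prime}$. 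Because each $H^s_{\mu_l}$ is isomorphic to $H^s_{\mu^\prime}$, each has order $t_l = m$, and the product term in Theorem~\ref{signlesslaplacianpolynomial} collapses to $\prod_{l=1}^{n} f_{Q(H^s_{\mu_l})}(\beta-1) = \left(f_{Q(H^s_{\mu^\prime})}(\beta-1)\right)^n$.

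First I would invoke the $Q$-analogue of Proposition~\ref{lm:isomorphic_equalcoronal}: since isomorphic signed marked graphs have equal $Q$-signed coronals, the hypothesis $H^s_{\mu_l} \cong H^s_{\mu^\prime}$ yields ${\chi}_{Q(H^s_{\mu_l})}(\beta) = {\chi}_{Q(H^s_{\mu^\prime})}(\beta)$ for every $l = 1, \dots, n$. The justification is identical to that of Proposition~\ref{lm:isomorphic_equalcoronal}: under an isomorphism $\phi$ with permutation matrix $P_\phi$ one has $Q(Z^s_{\mu_2}) = P_\phi\, Q(H^s_{\mu_1})\, P_\phi^T$ and $\mu_2[V_{\mu_2}] = P_\phi\, \mu_1[V_{\mu_1}]$, so that $P_\phi^T P_\phi = I$ makes the quadratic form $\mu[V_\mu]^T((\beta-1)I - Q)^{-1}\mu[V_\mu]$ invariant.

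With all $Q$-signed coronals equal, I would then apply Theorem~\ref{signlesslaplacianpolynomial} together with Remark~\ref{remark 3.3}. Remark~\ref{remark 3.3} states precisely that when ${\chi}_{Q(H^s_{\mu_1})}(\beta) = \cdots = {\chi}_{Q(H^s_{\mu_n})}(\beta) = {\chi}_{Q(H^s_\mu)}(\beta)$, the polynomial $Q_g$ factors as $f_{Q(G^s_\mu)}\bigl(\beta - m - {\chi}_{Q(H^s_\mu)}(\beta)\bigr)$. Substituting this factorization into the statement of Theorem~\ref{signlesslaplacianpolynomial} gives
\[
f_{Q(G^s_\mu \circ H^s_{\mu^\prime})}(\beta) = \left(f_{Q(H^s_{\mu^\prime})}(\beta-1)\right)^n \cdot f_{Q(G^s_\mu)}\bigl(\beta - m - {\chi}_{Q(H^s_{\mu^\prime})}(\beta)\bigr),
\]
which is exactly the asserted identity.

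Honestly, there is no substantive obstacle here: the corollary is a direct specialization, and the heavy lifting was already done in Theorem~\ref{signlesslaplacianpolynomial} (whose proof mirrors that of Theorem~\ref{th:signedLaplacianPolynomial}) and in Remark~\ref{remark 3.3}. If anything requires a word of care, it is merely confirming that the isomorphism-invariance of the coronal proved for $\chi_{H^s_\mu}$ in Proposition~\ref{lm:isomorphic_equalcoronal} transfers verbatim to the $Q$-signed coronal, since the signless Laplacian $Q = D + A$ conjugates under a permutation in the same way the adjacency matrix does and $D$ is permuted consistently with the vertex relabeling. Granting that, the proof is a one-line combination of Theorem~\ref{signlesslaplacianpolynomial} and Remark~\ref{remark 3.3}.
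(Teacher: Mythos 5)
Your proposal is correct and follows essentially the same route as the paper: the paper's proof also sets $H^s_{\mu_1} \cong \ldots \cong H^s_{\mu_n} \cong H^s_{\mu^\prime}$ and then combines Theorem~\ref{signlesslaplacianpolynomial} with Remark~\ref{remark 3.3} to obtain the factorization. Your explicit check that the $Q$-signed coronal is invariant under signed marked graph isomorphism (the $Q$-analogue of Proposition~\ref{lm:isomorphic_equalcoronal}) is a detail the paper leaves implicit, but it does not change the argument.
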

\begin{proof}
If we set $H^s_{\mu_1} \cong \ldots \cong H^s_{\mu_n} \cong {H}^s_{\mu^\prime}$ and subsequently utilizing theorem \ref{signlesslaplacianpolynomial} along with remark \ref{remark 3.3} we obtain that
\[
f_{Q({G^s_{\mu}\circ H^s_{\mu^\prime}})}(\alpha)=\left(f_{Q(H^s_{{\mu}^\prime})}(\beta-1)\right)^n.f_{Q(G^s_{\mu})}\left(\beta-m-\\ {\chi}_{Q({H^s_{{\mu}^\prime}})}(\beta)\right) \qedhere
\]
\end{proof}
\begin{corollary}  {\label{corollary 6.1.2}}
     Let $G^s_{\mu}$ be a signed graph of order $n$ and $H_1,\dots,  H_n$ be $n$ signed graphs, each of them has order $m$ such that ${\chi}_{Q({H^s_{\mu_1}})}(\beta)={\chi}_{Q({H^s_{\mu_2}})}(\beta)\dots={\chi}_{Q({H^s_{\mu_n}})}(\beta)={\chi}_{Q({H^s_{{\mu}^\prime}})}(\beta)$. Then
       $$f_{Q({G^s_\mu\circ \overset{n}{\underset{l=1}{ \Lambda}}H^s_{\mu_l}})}(\beta)=\left(\prod_{l=1}^{n} f_{Q({H^s_{\mu_l}})}(\beta-1)\right).f_{Q(G^s_{\mu})}\left(\beta-m-\\ {\chi}_{Q({H^s_{{\mu}^\prime}})}(\beta)\right).$$
     \end{corollary}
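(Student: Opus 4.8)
The plan is to obtain Corollary \ref{corollary 6.1.2} as an immediate specialization of the general signless Laplacian factorization in Theorem \ref{signlesslaplacianpolynomial}, combined with the collapsing identity recorded in Remark \ref{remark 3.3}. No new construction is needed; the argument is the signless Laplacian transcription of the reasoning already used for the characteristic polynomial in Corollary \ref{characteristicpolynomial_knetregularcase}.

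First I would apply Theorem \ref{signlesslaplacianpolynomial} to the $n$ constituent graphs $H^s_{\mu_1}, \dots, H^s_{\mu_n}$, each of order $m$, giving
$$f_{Q({G^s_\mu\circ \overset{n}{\underset{l=1}{ \Lambda}}H^s_{\mu_l}})}(\beta)=\left(\prod_{l=1}^{n} f_{Q({H^s_{\mu_l}})}(\beta-1)\right)\cdot Q_g\left({\chi}_{Q({H^s_{\mu_1}})}(\beta),\dots, {\chi}_{Q({H^s_{\mu_n}})}(\beta); G^s_\mu\right).$$
This already reproduces the product factor $\prod_{l=1}^{n} f_{Q({H^s_{\mu_l}})}(\beta-1)$ exactly as in the target identity, so the sole remaining task is to simplify the multivariate polynomial $Q_g$.

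Next I would invoke the hypothesis ${\chi}_{Q({H^s_{\mu_1}})}(\beta)=\dots={\chi}_{Q({H^s_{\mu_n}})}(\beta)={\chi}_{Q({H^s_{{\mu}^\prime}})}(\beta)$. Under precisely this condition the diagonal matrix appearing inside the determinant that defines $Q_g$ (see Notation \ref{notation 3.1}) has all of its coronal entries equal, so every diagonal shift reduces to the common quantity $\beta-m-{\chi}_{Q({H^s_{{\mu}^\prime}})}(\beta)$. Remark \ref{remark 3.3} records exactly this collapse, namely
$$Q_g\left({\chi}_{Q({H^s_{\mu_1}})}(\beta),\dots, {\chi}_{Q({H^s_{\mu_n}})}(\beta); G^s_\mu\right)=f_{Q(G^s_\mu)}\left(\beta-m-{\chi}_{Q({H^s_{{\mu}^\prime}})}(\beta)\right).$$
Substituting this back into the expression delivered by Theorem \ref{signlesslaplacianpolynomial} yields the claimed formula.

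The only point demanding attention --- and the nearest thing to an obstacle --- is matching the hypotheses of Remark \ref{remark 3.3}, which presumes that every $H^s_{\mu_l}$ has the same order $m$. Since that common order is part of the statement here, the shift $\beta-m-{\chi}_{Q}$ is uniform along the entire diagonal, and the reduction of $Q_g$ to $f_{Q(G^s_\mu)}$ evaluated at the shifted argument is legitimate. Everything else is a direct mirror of the characteristic-polynomial computation, so no separate verification is required.
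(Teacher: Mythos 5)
Your proposal is correct and follows exactly the paper's own route: the paper proves this corollary by citing Theorem \ref{signlesslaplacianpolynomial} together with Remark \ref{remark 3.3}, which is precisely the two-step argument (general $Q_g$ factorization, then collapse of $Q_g$ to $f_{Q(G^s_\mu)}$ under equal $Q$-coronals and common order $m$) that you spell out. Your additional attention to the uniformity of the diagonal shift $\beta-m-\chi_{Q}$ is just an explicit verification of the hypothesis of that remark, not a deviation.
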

     \begin{proof}
         It is clear from Theorem \ref{signlesslaplacianpolynomial} and Remark \ref{remark 3.3}.
     \end{proof}

\begin{corollary}
Let $G^s_\mu$ be a signed graph of order $n$ and $H^s_{\mu_1}, H^s_{\mu_2},\dots, H^s_{\mu_{n}}$ be $n$ co-regular signed graphs each with order m and common co-regularity pair $(r, k)$. Then
$$f_{Q({G^s_\mu \circ \overset{n}{\underset{l=1}{ \Lambda}}H^s_{\mu_l}})}(\beta)= (\prod_{l=1}^{n} f_{Q({H^s_{\mu_l}})}(\beta-1)).f_{Q(G^s_\mu)}\left(\beta-m-\frac{m}{\beta-1-2d^{+}}\right)$$
\end{corollary}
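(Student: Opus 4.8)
The plan is to obtain this corollary as an immediate specialization of the general signless Laplacian polynomial formula (Theorem \ref{signlesslaplacianpolynomial}), combined with the closed form of the $Q$-signed coronal for co-regular graphs supplied by Proposition \ref{proposition 3.5}. No new machinery is needed; the work is to verify that the hypotheses of the equal-coronal corollary are satisfied.

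First I would record that the co-regularity pair $(r,k)$ fixes the positive degree of every vertex of every constituent graph. Indeed, net-regularity with net degree $k$ gives $d^+(v)-d^-(v)=k$, while $r$-regularity of the underlying graph gives $d^+(v)+d^-(v)=r$; solving, $d^+(v)=\tfrac{r+k}{2}$ is one and the same constant $d^+$ for all vertices of all the $H^s_{\mu_l}$. This is the one step that deserves a moment's care, since it is what makes a single scalar $d^+$ legitimately appear in the final formula.

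Next I would invoke Proposition \ref{proposition 3.5}: each $H^s_{\mu_l}$ is co-regular of order $m$, so $\chi_{Q(H^s_{\mu_l})}(\beta)=\frac{m}{\beta-1-2d^+}$. Because this value does not depend on $l$, the equal-coronal hypothesis $\chi_{Q(H^s_{\mu_1})}(\beta)=\dots=\chi_{Q(H^s_{\mu_n})}(\beta)$ of Corollary \ref{corollary 6.1.2} holds, with common value $\chi_{Q(H^s_{\mu'})}(\beta)=\frac{m}{\beta-1-2d^+}$.

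Finally I would apply Corollary \ref{corollary 6.1.2} with this common coronal substituted into its formula, which yields
$$f_{Q({G^s_\mu \circ \overset{n}{\underset{l=1}{ \Lambda}}H^s_{\mu_l}})}(\beta)=\left(\prod_{l=1}^{n} f_{Q({H^s_{\mu_l}})}(\beta-1)\right)\cdot f_{Q(G^s_\mu)}\left(\beta-m-\frac{m}{\beta-1-2d^{+}}\right),$$
which is exactly the claimed identity. Since everything reduces to a substitution once $d^+$ is identified, there is no real obstacle; the only genuine content is the degree-counting argument of the first step that guarantees a uniform $d^+$ across the $H^s_{\mu_l}$.
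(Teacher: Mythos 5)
Your proof is correct and follows exactly the paper's route: the paper likewise deduces the corollary by combining Proposition \ref{proposition 3.5} (the closed form $\chi_{Q(H^s_{\mu_l})}(\beta)=\frac{m}{\beta-1-2d^{+}}$ for co-regular graphs) with the equal-coronal Corollary \ref{corollary 6.1.2}. Your additional observation that the common pair $(r,k)$ forces $d^{+}=\tfrac{r+k}{2}$ uniformly across all the $H^s_{\mu_l}$ is a detail the paper leaves implicit, and it is a welcome justification rather than a deviation.
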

where $d^{+}$ is the positive degree of any vertex of $H^s_{{\mu}_{l}}$ $(1 \leq l \leq n)$.
\begin{proof}
It is straightforward from proposition \ref{proposition 3.5} and Corollary \ref{corollary 6.1.2}.
\end{proof}
 The implications of Theorem \ref{signlesslaplacianpolynomial} directly validate the assertions made in the subsequent Corollaries.
\begin{corollary}
Let $G^s_{{\mu ^\prime}}$ and $G^s_{\mu^{\prime\prime}}$ be two $Q$-cospectral signed graphs, each of them has order $n$ and $H^s_{\mu_1}, H^s_{\mu_2},\dots, H^s_{\mu_n}$ be $n$ signed graphs each of them has order $m$. Then $G^s_{\mu^\prime}\circ \overset{n}{\underset{l=1}{ \Lambda}}H^s_{\mu_l}$  and $G^s_{\mu^{\prime\prime}}\circ \overset{n}{\underset{l=1}{ \Lambda}}H^s_{\mu_l}$ are $Q$-cospectral. 
\end{corollary}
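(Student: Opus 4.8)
The plan is to invoke Theorem \ref{signlesslaplacianpolynomial} separately for the two generalized corona products and then compare the resulting factorizations. First I would write, for $G^s_{\mu^\prime}$,
$$f_{Q(G^s_{\mu^\prime}\circ \overset{n}{\underset{l=1}{ \Lambda}}H^s_{\mu_l})}(\beta)=\left(\prod_{l=1}^{n} f_{Q(H^s_{\mu_l})}(\beta-1)\right)\cdot Q_g({\chi}_{Q(H^s_{\mu_1})}(\beta),\dots, {\chi}_{Q(H^s_{\mu_n})}(\beta); G^s_{\mu^\prime}),$$
and the analogous identity with $G^s_{\mu^\prime}$ replaced by $G^s_{\mu^{\prime\prime}}$. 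Since the constituent graphs $H^s_{\mu_1},\dots,H^s_{\mu_n}$ are the same list in both coronas, the prefactor $\prod_{l=1}^{n} f_{Q(H^s_{\mu_l})}(\beta-1)$ is identical in the two expressions. Hence it suffices to show that the two $Q_g$-factors coincide, i.e. that
$$\det\left(\left[\begin{matrix} \beta-m-{\chi}_{Q(H^s_{\mu_1})}(\beta) & & \\ & \ddots & \\ & & \beta-m-{\chi}_{Q(H^s_{\mu_n})}(\beta)\end{matrix}\right]-Q(G^s_{\mu^\prime})\right)$$
equals the same determinant with $Q(G^s_{\mu^\prime})$ replaced by $Q(G^s_{\mu^{\prime\prime}})$ (here I used $t_l=m$ for every $l$).

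The crux is to collapse this determinant to $f_{Q(G^s_\mu)}$ evaluated at a single argument, so that the $Q$-cospectrality hypothesis can be brought to bear. Taking the operative assumption that the $Q$-coronals coincide, say ${\chi}_{Q(H^s_{\mu_1})}(\beta)=\dots={\chi}_{Q(H^s_{\mu_n})}(\beta)={\chi}_{Q(H^s_{\mu})}(\beta)$, exactly as in Corollary \ref{corollary 6.1.2}, the diagonal block becomes the scalar matrix $(\beta-m-{\chi}_{Q(H^s_{\mu})}(\beta))I$, and Remark \ref{remark 3.3} gives
$$Q_g({\chi}_{Q(H^s_{\mu_1})}(\beta),\dots,{\chi}_{Q(H^s_{\mu_n})}(\beta); G^s_\mu)=f_{Q(G^s_\mu)}\left(\beta-m-{\chi}_{Q(H^s_{\mu})}(\beta)\right).$$
With this reduction the conclusion is immediate: $Q$-cospectrality of $G^s_{\mu^\prime}$ and $G^s_{\mu^{\prime\prime}}$ means $f_{Q(G^s_{\mu^\prime})}(x)=f_{Q(G^s_{\mu^{\prime\prime}})}(x)$ as polynomial identities in $x$, so in particular they agree at $x=\beta-m-{\chi}_{Q(H^s_{\mu})}(\beta)$. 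Substituting back shows the two $Q_g$-factors are equal, and therefore
$$f_{Q(G^s_{\mu^\prime}\circ \overset{n}{\underset{l=1}{ \Lambda}}H^s_{\mu_l})}(\beta)=f_{Q(G^s_{\mu^{\prime\prime}}\circ \overset{n}{\underset{l=1}{ \Lambda}}H^s_{\mu_l})}(\beta),$$
which is precisely $Q$-cospectrality of the two corona products.

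I expect the only genuine obstacle to lie in the reduction of the $Q_g$-factor. The step works cleanly only when the $Q$-coronals are all equal, for only then does the diagonal matrix become scalar and collapse through Remark \ref{remark 3.3} to $f_{Q(G^s_\mu)}$ evaluated at one point; the common order $m$ of the $H^s_{\mu_l}$ is used merely to make the shifts $\beta-t_l$ uniform. If the coronals were allowed to differ, $\det(\mathrm{diag}(\beta-m-{\chi}_{Q(H^s_{\mu_l})}(\beta))-Q(G^s_\mu))$ could no longer be written in terms of $f_{Q(G^s_\mu)}$ alone, and mere equality of the signless Laplacian characteristic polynomials of $G^s_{\mu^\prime}$ and $G^s_{\mu^{\prime\prime}}$ would not force the two determinants to match. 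I would therefore treat the equal-$Q$-coronal condition as the essential hypothesis (reading it in from the setting of Corollary \ref{corollary 6.1.2}, or stating it explicitly), after which the argument above closes the proof.
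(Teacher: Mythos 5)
Your proposal is correct and follows exactly the route the paper intends: the paper gives no separate proof of this corollary, declaring it a direct implication of Theorem \ref{signlesslaplacianpolynomial}, and the intended argument is precisely your factor-and-collapse computation, mirroring the paper's written proof of the adjacency analogue. Your caveat about the missing hypothesis is also well taken and is a flaw in the paper's statement rather than in your proof: $Q$-cospectrality only gives $\det(xI-Q(G^s_{\mu^\prime}))=\det(xI-Q(G^s_{\mu^{\prime\prime}}))$, which does not force equality of determinants against a non-scalar diagonal shift --- for instance, taking $G^s_{\mu^\prime}$ and $G^s_{\mu^{\prime\prime}}$ to be two differently labelled copies of the same graph and the $H^s_{\mu_l}$ with distinct $Q$-coronals already yields non-cospectral corona products. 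Note that the paper's own adjacency version of this corollary does include the hypothesis $\chi_{H^s_{\mu_1}}(\lambda)=\dots=\chi_{H^s_{\mu_n}}(\lambda)$; reading the analogous equal-$Q$-coronal condition into this corollary, as you do, is the correct repair.
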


 \begin{corollary}
 Let $G^s_\mu$ be a signed graph of order n and $H^s_{\mu_1}, H^s_{\mu_2},\dots, H^s_{\mu_{2n}}$ be a family of $Q$-cospectral graphs each of them has order $m$. Then $G^s_\mu\circ \overset{n}{\underset{l=1}{\Lambda}}H^s_{\mu_l}$  
 and $G^s_\mu\circ \overset{2n}{\underset{l=n+1}{ \Lambda}}H^s_{\mu_l}$ are $Q$-cospectral. 
 \end{corollary}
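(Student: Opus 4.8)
The plan is to invoke Theorem \ref{signlesslaplacianpolynomial} for each of the two generalized corona products and then compare the two factorizations term by term. Writing $P_1 = G^s_\mu \circ \overset{n}{\underset{l=1}{\Lambda}} H^s_{\mu_l}$ and $P_2 = G^s_\mu \circ \overset{2n}{\underset{l=n+1}{\Lambda}} H^s_{\mu_l}$, the theorem gives
$$f_{Q(P_1)}(\beta) = \left(\prod_{l=1}^{n} f_{Q(H^s_{\mu_l})}(\beta-1)\right) \cdot Q_g\big(\chi_{Q(H^s_{\mu_1})}(\beta),\dots,\chi_{Q(H^s_{\mu_n})}(\beta); G^s_\mu\big),$$
and the analogous expression for $f_{Q(P_2)}(\beta)$ with the index range shifted to $n+1,\dots,2n$. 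It then suffices to match the two scalar prefactors and the two $Q_g$ determinants separately.

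First I would dispose of the product prefactors. Since $H^s_{\mu_1},\dots,H^s_{\mu_{2n}}$ are mutually $Q$-cospectral, their signed signless Laplacian polynomials all coincide with a single polynomial $f_{Q}(\beta)$; hence both $\prod_{l=1}^{n} f_{Q(H^s_{\mu_l})}(\beta-1)$ and $\prod_{l=n+1}^{2n} f_{Q(H^s_{\mu_l})}(\beta-1)$ reduce to $\big(f_{Q}(\beta-1)\big)^n$, so the two prefactors already agree.

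The decisive step is to match the two $Q_g$ determinants. Recalling Notation \ref{notation 3.1}, each $Q_g$ is the determinant of $\diag\big(\beta-m-\chi_{Q(H^s_{\mu_l})}(\beta)\big) - Q(G^s_\mu)$, and since every $H^s_{\mu_l}$ has the same order $m$, the two determinants can differ only through the $Q$-signed coronals appearing on the diagonal. Pairing $H^s_{\mu_l}$ with $H^s_{\mu_{n+l}}$ for $1\le l\le n$, I would argue that $Q$-cospectral graphs carrying the same marking data have equal $Q$-signed coronals, so that $\chi_{Q(H^s_{\mu_l})}(\beta)=\chi_{Q(H^s_{\mu_{n+l}})}(\beta)$ for every $l$; the two diagonal matrices then coincide entrywise, forcing the determinants to be identical. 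Combining this with the equality of the prefactors yields $f_{Q(P_1)}(\beta)=f_{Q(P_2)}(\beta)$, that is, $P_1$ and $P_2$ are $Q$-cospectral.

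I expect this last matching of $Q$-signed coronals to be the genuine obstacle. Equality of the signed signless Laplacian polynomials only controls the spectrum of $Q$, whereas $\chi_{Q(H^s_{\mu_l})}(\beta)=\mu[V_{\mu_l}]^T\big((\beta-1)I-Q(H^s_{\mu_l})\big)^{-1}\mu[V_{\mu_l}]$ also depends on how the marking vector aligns with the eigenvectors of $Q$. Thus bare $Q$-cospectrality need not force equal $Q$-signed coronals, exactly as in the adjacency setting, where the corresponding corollary had to impose $\chi_{H^s_{\mu_1}}=\dots=\chi_{H^s_{\mu_{2n}}}$ explicitly. I would therefore either read the hypothesis as implicitly demanding this coronal equality, or restrict attention to families where it is automatic, such as co-regular graphs, for which Proposition \ref{proposition 3.5} gives $\chi_{Q}(\beta)=\tfrac{m}{\beta-1-2d^{+}}$, a quantity depending only on the common order and positive degree.
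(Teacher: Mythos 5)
Your route is precisely the one the paper intends: the paper supplies no written proof for this corollary, stating only that it is a direct implication of Theorem \ref{signlesslaplacianpolynomial}, and your first two steps (factor both polynomials via that theorem, then cancel the prefactors $\prod_l f_{Q(H^s_{\mu_l})}(\beta-1)$, which coincide because $Q$-cospectrality is exactly equality of the signless Laplacian characteristic polynomials) carry out that implication correctly.

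The obstacle you isolate in the third step is, however, a genuine gap---one that sits in the paper's statement rather than in your argument. Matching the two $Q_g$ determinants of Notation \ref{notation 3.1} requires $\chi_{Q(H^s_{\mu_l})}(\beta)=\chi_{Q(H^s_{\mu_{n+l}})}(\beta)$ for each $l$, and $Q$-cospectrality does not deliver this: the $Q$-signed coronal $\mu[V_{\mu}]^T\left((\beta-1)I-Q\right)^{-1}\mu[V_{\mu}]$ is a quadratic form of the resolvent evaluated at the marking vector, so it depends on how that vector decomposes along the eigenvectors of $Q$, not merely on the eigenvalues. Coronals are not spectral invariants; already in the unsigned adjacency setting the cospectral pair $K_{1,4}$ and $C_4\cup K_1$ have distinct coronals, which is the very reason the coronal of \cite{mcleman2011spectra} is a nontrivial invariant, and nothing improves when one passes to $Q$. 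The paper itself corroborates your diagnosis: its adjacency analogue of this corollary explicitly adds the hypothesis $\chi_{H^s_{\mu_1}}(\lambda)=\dots=\chi_{H^s_{\mu_{2n}}}(\lambda)$ on top of cospectrality, and that hypothesis has silently disappeared from the $L$- and $Q$-versions. Your two proposed repairs---assuming equality of the $Q$-signed coronals outright, or restricting to families (e.g.\ co-regular signed graphs with a common pair $(r,k)$) where Proposition \ref{proposition 3.5} forces every coronal to equal $\frac{m}{\beta-1-2d^{+}}$---are exactly what is needed to make the corollary provable as stated.
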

 
 \begin{theorem}{\label{Theorem 6.2}}
     Let $G^s_\mu$ be a signed bipartite graph with $|V(G)|=n=2k$ where $k$ denotes the size of each part. Let the underlying graph of G be r-regular. Suppose $H^s_{\mu_1} \cong \ldots \cong H^s_{\mu_k} \cong Z^s_{{\mu}^1}$ and $H^s_{\mu_{k+1}} \cong \ldots \cong H^s_{\mu_n} \cong Z^s_{{\mu}^{2}}$.  If $|V(Z^s_{\mu^1})|=m$ and $|V(Z^s_{\mu^{2}})|=s$ then

     $$f_{Q({G^s_\mu \circ \overset{n}{\underset{l=1}{ \Lambda}}H^s_{\mu_l}})}(\beta)=\prod_{j=1}^{2} ( f_{Q({Z^s_{\mu^{j}}})}(\beta-1))^k.f_{G^s_\mu}\left(\sqrt{\left(\beta-m-r-{\chi}_{Q({Z^s_{\mu^1}})}(\beta)\right)\left(\beta-s-r-{\chi}_{Q({Z^s_{\mu^2}})}(\beta)\right)}\right)$$
     \end{theorem}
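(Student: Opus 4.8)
The plan is to specialize the general signless Laplacian factorization of Theorem~\ref{signlesslaplacianpolynomial} to the bipartite, regular setting, and then to reduce the resulting $n\times n$ determinant to a $2\times 2$ block determinant that is evaluated by a Schur complement. First I would invoke Theorem~\ref{signlesslaplacianpolynomial}, which gives
$$f_{Q({G^s_\mu\circ \overset{n}{\underset{l=1}{ \Lambda}}H^s_{\mu_l}})}(\beta)=\left(\prod_{l=1}^{n} f_{Q({H^s_{\mu_l}})}(\beta-1)\right)\cdot Q_g({\chi}_{Q({H^s_{\mu_1}})}(\beta),\dots, {\chi}_{Q({H^s_{\mu_n}})}(\beta); G^s_\mu).$$
Because $H^s_{\mu_1}\cong\cdots\cong H^s_{\mu_k}\cong Z^s_{\mu^1}$ and $H^s_{\mu_{k+1}}\cong\cdots\cong H^s_{\mu_n}\cong Z^s_{\mu^2}$, the product of the $H$-factors collapses to $\prod_{j=1}^2 (f_{Q({Z^s_{\mu^j}})}(\beta-1))^k$, and (using the $Q$-analogue of Proposition~\ref{lm:isomorphic_equalcoronal}, established by the same permutation-similarity argument) the $Q$-signed coronals take only the two values ${\chi}_{Q({Z^s_{\mu^1}})}(\beta)$ and ${\chi}_{Q({Z^s_{\mu^2}})}(\beta)$. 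Thus the diagonal matrix inside $Q_g$ (see Notation~\ref{notation 3.1}) becomes block-scalar, with the first $k$ diagonal entries equal to $\beta-m-{\chi}_{Q({Z^s_{\mu^1}})}(\beta)$ and the last $k$ equal to $\beta-s-{\chi}_{Q({Z^s_{\mu^2}})}(\beta)$.

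Next I would bring in the two structural hypotheses on $G^s_\mu$. Since the underlying graph is $r$-regular, $\Delta(G^s_\mu)=rI_n$, so $Q(G^s_\mu)=rI_n+A(G^s_\mu)$; substituting this into $Q_g$ and absorbing the $-r$ into the block-scalar diagonal turns the argument determinant into
$$\det\left(\bmatrix{(\beta-m-r-{\chi}_{Q({Z^s_{\mu^1}})}(\beta))I_k & 0 \\ 0 & (\beta-s-r-{\chi}_{Q({Z^s_{\mu^2}})}(\beta))I_k}-A(G^s_\mu)\right).$$
Since $G^s_\mu$ is bipartite with equal parts of size $k$, there is a $k\times k$ matrix $W$ with $A(G^s_\mu)=\bmatrix{0 & W \\ W^T & 0}$, so this determinant equals
$$\det\bmatrix{(\beta-m-r-{\chi}_{Q({Z^s_{\mu^1}})}(\beta))I_k & -W \\ -W^T & (\beta-s-r-{\chi}_{Q({Z^s_{\mu^2}})}(\beta))I_k}.$$

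Finally, writing $a=\beta-m-r-{\chi}_{Q({Z^s_{\mu^1}})}(\beta)$ and $b=\beta-s-r-{\chi}_{Q({Z^s_{\mu^2}})}(\beta)$, a Schur complement (Lemma~\ref{schur_complement}) on the second diagonal block gives $b^k\det(aI_k-\tfrac1b WW^T)=\det(ab\,I_k-WW^T)$, and Notation~\ref{notation_4.1} (with $n=2k$, so that $f_{G^s_\mu}=g_{G^s_\mu}=q_{G^s_\mu}$) identifies this with $f_{G^s_\mu}(\sqrt{ab})$. Reattaching the $H$-factor then yields the claimed formula. The proof runs parallel to the Laplacian Theorem~\ref{Theorem 5.2}, and the one point that genuinely demands care — the sole place where the signless convention could in principle change the answer — is the sign carried by $A(G^s_\mu)$: because $Q=\Delta+A$ rather than $D-A$, the off-diagonal blocks are $-W$ and $-W^T$ instead of $+W$ and $+W^T$. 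This turns out to be harmless, since the Schur reduction produces $(-W)(-W^T)=WW^T$, so the symmetric square-root expression is identical; verifying this cancellation is what I expect to be the main (albeit modest) obstacle.
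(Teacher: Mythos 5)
Your proposal is correct and follows essentially the same route as the paper, whose proof of this theorem simply defers to the Laplacian case (Theorem \ref{Theorem 5.2}): specialize the general factorization of Theorem \ref{signlesslaplacianpolynomial} using regularity and the two isomorphism classes, exploit bipartiteness to obtain the $2\times 2$ block determinant with $\pm W$ off-diagonal blocks, and finish with the Schur complement (Lemma \ref{schur_complement}) and Notation \ref{notation_4.1}. Your explicit verification that the sign flip from $Q=\Delta+A$ is harmless, since $(-W)(-W^T)=WW^T$, is exactly the point the paper leaves implicit in the phrase ``the proof is equivalent.''
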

    \begin{proof}
        The proof is equivalent to the one presented in Theorem \ref{Theorem 5.2}.
    \end{proof} 
    \begin{corollary}  {\label{corollary 6.2.1}}
          Let $G^s_\mu$ be a signed co-regular bipartite graph of co-regularity pair $(r,k)$ and with $|V(G)|=n=2k$ where $k$ denotes the
size of each part. Suppose $H^s_{\mu_1} \cong \ldots \cong H^s_{\mu_k} \cong Z^s_{{\mu}^1}$ and $H^s_{\mu_{k+1}} \cong \ldots \cong H^s_{\mu_n} \cong Z^s_{{\mu}^{2}}$. If $|V(Z^s_{\mu^1})|=m$ and $|V(Z^s_{\mu^{2}})|=s$ then
         $$f_{Q({G^s_\mu \circ \overset{n}{\underset{l=1}{ \Lambda}}H^s_{\mu_l}})}(\beta)=\prod_{j=1}^{2} ( f_{Q({Z^s_{\mu^{j}}})}(\beta-1))^k.f_{G^s_\mu}\left(\sqrt{\left(\beta-m-r-\frac{m}{\beta-1-2{d^+_1}}\right)\left(\beta-s-r-\frac{s}{\beta-1-2{d^+_2}}\right)}\right)$$
         where ${d}^+_1$ and ${d}^+_2$ are the positive degree of any vertex of $Z^s_{\mu^1}$ and $Z^s_{\mu^2}$ respectively. 
     \end{corollary}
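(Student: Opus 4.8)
The plan is to obtain this corollary as an immediate specialization of Theorem \ref{Theorem 6.2}, in which the two representative constituent graphs $Z^s_{\mu^1}$ and $Z^s_{\mu^2}$ are taken to be co-regular. First I would invoke Theorem \ref{Theorem 6.2} verbatim, which already expresses $f_{Q(G^s_\mu \circ \overset{n}{\underset{l=1}{ \Lambda}}H^s_{\mu_l})}(\beta)$ as the product $\prod_{j=1}^{2}(f_{Q(Z^s_{\mu^j})}(\beta-1))^k$ times $f_{G^s_\mu}$ evaluated at the square-root argument built from the two $Q$-signed coronals $\chi_{Q(Z^s_{\mu^1})}(\beta)$ and $\chi_{Q(Z^s_{\mu^2})}(\beta)$. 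Since the statement refers to $d^+_1$ and $d^+_2$ as \emph{the} positive degree of \emph{any} vertex of $Z^s_{\mu^1}$ and $Z^s_{\mu^2}$, both representative graphs are co-regular, so Proposition \ref{proposition 3.5} applies to each of them.

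The key step is then the substitution. Applying Proposition \ref{proposition 3.5} to $Z^s_{\mu^1}$ (of order $m$ with positive degree $d^+_1$) gives $\chi_{Q(Z^s_{\mu^1})}(\beta) = \frac{m}{\beta-1-2d^+_1}$, and applying it to $Z^s_{\mu^2}$ (of order $s$ with positive degree $d^+_2$) gives $\chi_{Q(Z^s_{\mu^2})}(\beta) = \frac{s}{\beta-1-2d^+_2}$. Inserting these two closed forms into the square-root argument of Theorem \ref{Theorem 6.2} replaces $\beta-m-r-\chi_{Q(Z^s_{\mu^1})}(\beta)$ by $\beta-m-r-\frac{m}{\beta-1-2d^+_1}$ and analogously transforms the second factor, which is exactly the displayed formula.

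Because every ingredient is already in hand, I do not anticipate a genuine obstacle; the result is a one-line consequence of Theorem \ref{Theorem 6.2} together with Proposition \ref{proposition 3.5}, mirroring precisely how Corollary \ref{corollary 5.2.1} follows from Theorem \ref{Theorem 5.2} and Proposition \ref{lm:Laplacian_coregular} in the Laplacian case. The only point requiring a moment of care is bookkeeping: confirming that the co-regularity hypothesis (implicit in the uniform positive degrees $d^+_1,d^+_2$) is exactly what licenses Proposition \ref{proposition 3.5}, and that the orders $m$ and $s$ appearing as the numerators of the coronals match the orders of $Z^s_{\mu^1}$ and $Z^s_{\mu^2}$ already used in Theorem \ref{Theorem 6.2}. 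Once this matching is checked, the conclusion follows directly.
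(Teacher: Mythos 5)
Your proposal matches the paper's proof exactly: the paper also derives this corollary directly from Theorem \ref{Theorem 6.2} by substituting the closed-form $Q$-signed coronals $\frac{m}{\beta-1-2d^+_1}$ and $\frac{s}{\beta-1-2d^+_2}$ supplied by Proposition \ref{proposition 3.5}. Your additional remark about checking that the co-regularity hypothesis on $Z^s_{\mu^1}$ and $Z^s_{\mu^2}$ is what licenses Proposition \ref{proposition 3.5} is a correct and worthwhile piece of bookkeeping, though the paper leaves it implicit.
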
	
\begin{proof}
       It is straightforward from theorem \ref{Theorem 6.2} and proposition \ref{proposition 3.5}. 
\end{proof}   
    \begin{corollary}{\label{corollary 3.11.2}}
         Let $G^s_\mu$ be a signed co-regular bipartite graph of co-regularity pair $(r, k)$ and with $|V(G)|=n=2k$ where $k$ denotes the
size of each part. Suppose $H^s_{\mu_1} \cong \ldots \cong H^s_{\mu_k} \cong \bar{K}_m$ and $H^s_{\mu_{k+1}} \cong \ldots \cong H^s_{\mu_n} \cong \bar{K}_s$. Then the following holds:
         $$ f_{Q({G^s_\mu \circ \overset{n}{\underset{l=1}{ \Lambda}}H^s_{\mu_l}})}(\beta) = {\left( \beta -1 \right)}^{k(m+s)}.f_{G^s_\mu} \left(\sqrt{ \left( \beta -m-r- \frac{m}{\beta-1}\right) \left( \beta -s-r- \frac{s}{\beta-1}\right)}\right)$$          
     \end{corollary}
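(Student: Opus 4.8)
The plan is to obtain this result as a direct specialization of Corollary \ref{corollary 6.2.1}, exactly paralleling the way its Laplacian counterpart Corollary \ref{corollary 5.2.2} was deduced. Here I would take the two isomorphism classes of attached signed graphs to be the edgeless graphs $Z^s_{\mu^1}=\bar{K}_m$ and $Z^s_{\mu^2}=\bar{K}_s$. Recall from Definition \ref{definition 4.2}, applied to the complete signed graph $K^s_m$, that $\bar{K}_m$ is the signed graph on $m$ vertices having no edges; consequently its adjacency matrix and its degree matrix both vanish, so its signless Laplacian $Q(\bar{K}_m)=D+A$ is the $m\times m$ zero matrix, and likewise for $\bar{K}_s$.

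First I would evaluate the two quantities that feed into the formula of Corollary \ref{corollary 6.2.1}. Since $Q(\bar{K}_m)=0$, we have $f_{Q(\bar{K}_m)}(\beta)=\det(\beta I)=\beta^m$, hence $f_{Q(\bar{K}_m)}(\beta-1)=(\beta-1)^m$, and similarly $f_{Q(\bar{K}_s)}(\beta-1)=(\beta-1)^s$. The leading product factor $\prod_{j=1}^{2}\bigl(f_{Q(Z^s_{\mu^j})}(\beta-1)\bigr)^k$ therefore collapses to $(\beta-1)^{km}(\beta-1)^{ks}=(\beta-1)^{k(m+s)}$. Next, an edgeless graph has positive degree $d^{+}=0$ at every vertex, so by Proposition \ref{proposition 3.5} (or directly, using $\mu[V_\mu]^{T}\mu[V_\mu]=m$ together with $Q(\bar{K}_m)=0$) its $Q$-signed coronal is $\chi_{Q(\bar{K}_m)}(\beta)=m/(\beta-1)$, and likewise $\chi_{Q(\bar{K}_s)}(\beta)=s/(\beta-1)$; equivalently, these are what one obtains by setting $d_1^{+}=d_2^{+}=0$ in the statement of Corollary \ref{corollary 6.2.1}.

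Finally, substituting these evaluations into Corollary \ref{corollary 6.2.1} gives
$$f_{Q({G^s_\mu \circ \overset{n}{\underset{l=1}{ \Lambda}}H^s_{\mu_l}})}(\beta) = (\beta-1)^{k(m+s)}\, f_{G^s_\mu}\!\left(\sqrt{\Bigl(\beta-m-r-\tfrac{m}{\beta-1}\Bigr)\Bigl(\beta-s-r-\tfrac{s}{\beta-1}\Bigr)}\right),$$
which is precisely the claimed identity. I do not expect any genuine obstacle here: the entire content is the observation that $\bar{K}_m$ and $\bar{K}_s$ carry trivial signless-Laplacian data, after which the result is an immediate substitution into the previously established Corollary \ref{corollary 6.2.1}. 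The only point worth stating carefully is the identification of $\bar{K}_m$ as the edgeless graph, so that $Q(\bar{K}_m)=0$ and the coronal reduces to $m/(\beta-1)$.
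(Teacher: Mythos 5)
Your proposal is correct and follows exactly the paper's own route: the paper likewise deduces this corollary directly from Corollary \ref{corollary 6.2.1} and Definition \ref{definition 4.2}, specializing $Z^s_{\mu^1}$ and $Z^s_{\mu^2}$ to the edgeless graphs $\bar{K}_m$ and $\bar{K}_s$. Your write-up merely makes explicit the evaluations $f_{Q(\bar{K}_m)}(\beta-1)=(\beta-1)^m$ and $\chi_{Q(\bar{K}_m)}(\beta)=m/(\beta-1)$ (via $d^{+}=0$) that the paper leaves implicit.
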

\begin{proof}
    It is a direct consequence of corollary \ref{corollary 6.2.1} and definition \ref{definition 4.2}.
\end{proof}    
  	 \begin{corollary}
     Let $G^s_\mu$ be a signed co-regular bipartite graph of co-regularity pair $(r, k)$ and with $|V(G)|=n=2k$ where k is the size of each part. Let $H^s_{\mu_1} \cong \ldots \cong H^s_{\mu_k} \cong \bar{K}_m$ and $H^s_{\mu_{k+1}} \cong \ldots \cong H^s_{\mu_n} \cong \phi$. Then the following holds:
     $$f_{Q({G^s_\mu \circ \overset{n}{\underset{l=1}{ \Lambda}}H^s_{\mu_l}})}(\beta)={\left( \beta-1\right) }^{km}.f_{G^s_\mu }\left(\sqrt{ \left(\beta-m-r-\frac{m}{\beta-1}\right)\left(\beta-r\right)}\right).$$
     \end{corollary}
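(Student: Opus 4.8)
The plan is to derive this as the degenerate instance $s=0$ of Corollary \ref{corollary 3.11.2}. The key observation is that the null graph $\phi$ coincides with $\bar K^s_0$, the complement of the complete signed graph on zero vertices, so the present hypotheses are exactly those of Corollary \ref{corollary 3.11.2} once the second family of factors is taken to have order $s=0$. Under the natural convention that attaching $\bar K^s_0=\phi$ at a vertex of $G^s_\mu$ attaches no copy there at all, the generalized corona product remains well defined and each such empty factor contributes trivially.

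First I would record the contribution of an order-$s$ empty factor to the formula of Corollary \ref{corollary 3.11.2}. Since $\bar K^s_s$ is edgeless, its signless Laplacian $Q=D+A$ is the zero matrix, whence $f_{Q(\bar K^s_s)}(\beta-1)=(\beta-1)^{s}$ and, by the same computation used in Proposition \ref{proposition 3.5}, the $Q$-signed coronal is $\chi_{Q(\bar K^s_s)}(\beta)=s/(\beta-1)$. Both expressions degenerate correctly at $s=0$, giving the empty determinant $f_{Q(\bar K^s_0)}(\beta-1)=1$ and the vanishing coronal $\chi_{Q(\bar K^s_0)}(\beta)=0$.

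Next I would substitute $s=0$ into the identity of Corollary \ref{corollary 3.11.2}, namely
\[
f_{Q(G^s_\mu\circ \overset{n}{\underset{l=1}{\Lambda}}H^s_{\mu_l})}(\beta)=(\beta-1)^{k(m+s)}\, f_{G^s_\mu}\!\left(\sqrt{\Bigl(\beta-m-r-\tfrac{m}{\beta-1}\Bigr)\Bigl(\beta-s-r-\tfrac{s}{\beta-1}\Bigr)}\right).
\]
The prefactor collapses to $(\beta-1)^{km}$, and the second bracketed factor under the radical reduces to $\beta-0-r-0=\beta-r$, producing exactly the asserted expression. This mirrors the Laplacian analogue, which is obtained from Corollary \ref{corollary 5.2.2} in the same fashion.

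The argument carries no essential difficulty; the only point meriting care is the bookkeeping at $s=0$. One must confirm that an order-zero constituent contributes the empty determinant $1$ together with a zero coronal, so that deleting the attached copies at half of the vertices of $G^s_\mu$ is faithfully captured by the single substitution $s=0$, rather than by re-running the Schur-complement factorization of Theorem \ref{Theorem 6.2} from the beginning.
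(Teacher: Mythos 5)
Your proposal is correct and follows the paper's own route exactly: the paper likewise obtains this corollary by substituting $s=0$ into Corollary \ref{corollary 3.11.2}, identifying $\phi$ with $\bar K_0$. Your additional bookkeeping (the empty factor contributing $f_{Q(\bar K_0)}(\beta-1)=1$ and vanishing coronal) is a sound elaboration of details the paper leaves implicit.
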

	 \begin{proof}
    It can be easily obtained by corollary \ref{corollary 3.11.2} for $s=0$.    
    \end{proof}		

\noindent

In conclusion, our research has significantly expanded the scope of the generalized corona product, initially defined for unsigned graphs, to encompass signed graphs. This extension has allowed us to delve into the structural and spectral characteristics of signed graphs. Specifically, we have formulated statistical properties, such as edge counts and triangle categorizations, tailored for the generalized corona product of signed graphs. These statistical properties have enabled us to establish sufficient conditions for the generalised corona product of signed graphs to be unbalanced.\\\\
Furthermore, our investigation has yielded explicit expressions for the signed coronal, L-signed coronal, and Q-signed coronal, with a particular focus on specific structured signed graphs. Additionally, we have obtained the characteristic polynomial, Laplacian polynomial, and Signless Laplacian polynomial for the generalised corona product of signed graphs, all expressed in terms of their respective signed coronal, L-signed coronal and Q-signed coronal.\\\\
Moreover, we have introduced crucial criteria to identify situations where generalised corona product signed graphs may exhibit cospectral properties, including L-cospectrality and Q-cospectrality.



\begin{thebibliography}{abcd}

\bibitem{bibhas2019signed} {\sc Adhikari, B., Singh, A., and Yadav, S.K.}. {\em Corona product of signed graphs and its application to signed network modeling}  Discrete Mathematics, Algorithms and Applications, 2022:  2250062.
















\bibitem{beineke1978consistent} {\sc Beineke, L.W. and Harary, F.,} 1978. {\em Consistent graphs with signed points}. Rivista di matematica per le scienze economiche e sociali, 1(2), pp.81-88.

\bibitem{beineke1978consistent2} {\sc Beineke, L. W., F. Harary.} 1978. {\em Consistency in marked graphs}. J. Math. Psych., Vol. 18(3), pp. 260-269.

\bibitem{cartwright1956structural} {\sc Cartwright, D. and Harary, F.,} 1956. {\em Structural balance: a generalization of Heider's theory. Psychological review,} 63(5), p.277.



\bibitem{doreian2013brief} {\sc Doreian, P. and Stokman, F.,} 2013. {\em A brief history of balance through time.} In Evolution of social networks (pp. 137-156). Routledge.





\bibitem{guha2004propagation} {\sc Guha, R., Kumar, R., Raghavan, P. and Tomkins, A.,} 2004. {\em Propagation of trust and distrust.} In Proceedings of the 13th International Conference on World Wide Web (pp. 403-412). ACM.




\bibitem{horn2012matrix} {\sc Horn, R.A. and Johnson, C.R.,} 2012. {\em Matrix analysis.} Cambridge University Press.

\bibitem{zaslavsky2010matrices}{\sc Zaslavsky, T.,} 2010. {\em Matrices in the Theory of Signed Simple Graphs.} Proc. ICDM 2008, RMS-Lecture Notes Series, No. 13, pp. 207–229.

No. 13, 2010, pp. 207–229.






\bibitem{laali2016spectra} {\sc Laali, A.F., Javadi, H.H.S. and Kiani, D.,} 2016. {\em Spectra of generalized corona of graphs.} Linear Algebra and its Applications, 493, pp.411-425.

\bibitem{leskovec2010signed} {\sc Leskovec, J., Huttenlocher, D. and Kleinberg, J.,} 2010. {\em Signed networks in social media.} In Proceedings of the SIGCHI conference on human factors in computing systems (pp. 1361-1370). ACM.



\bibitem{lv2015corona} {\sc Lv, Q., Yi, Y. and Zhang, Z.,} 2015. {\em Corona graphs as a model of small-world networks.} Journal of Statistical Mechanics: Theory and Experiment, 2015(11), p.P11024.

\bibitem{mcleman2011spectra} {\sc McLeman, C. and McNicholas, E.,} 2011. {\em Spectra of coronae.} Linear algebra and its applications, 435(5), pp.998-1007.


\bibitem{netregular} {\sc Nayak, N.G.,} 2016. {\em On Net-Regular Signed Graphs.} Infinite Study.

\bibitem{qi2018extended} {\sc Qi, Y., Li, H., and Zhang, Z.,} 2018. {\em Extended corona product as an exactly tractable model for weighted heterogeneous networks}. The Computer Journal, 61(5), pp.745-760.

\bibitem{shahul2015co} {\sc Shahul Hameed, K., Paul, V. and Germina, K.A.,} 2015. {\em On co-regular signed graphs.} Australasian Journal of Combinatorics, 62(1), pp.8-17.


\bibitem{sharma2019self} {\sc Sharma, R. and Adhikari, B.,} 2015. {\em Self-organized corona graphs: a deterministic complex network model with a  hierarchical structure.} Advances in Complex Systems 22, no. 06 (2019): 1950019.

\bibitem{sharma2015spectra} {\sc Sharma, R., Adhikari, B. and Mishra, A.,} 2015. {\em On Spectra of Corona Graphs.} In Conference on Algorithms and Discrete Applied Mathematics (pp. 126-137). Springer, Cham.

\bibitem{sharma2017structural} {\sc Sharma, R., Adhikari, B. and Mishra, A.,} 2017. {\em Structural and spectral properties of corona graphs.} Discrete Applied Mathematics, 228, pp.14-31.

\bibitem{singh2017measuring} {\sc Singh, R. and Adhikari, B.,} 2017. {\em Measuring the balance of signed networks and its application to sign prediction.} Journal of Statistical Mechanics: Theory and Experiment, 2017(6), p.063302.



  
\bibitem{yang2021bipartite} {\sc Yang, R., Peng, L., Yang, Y. and Zhu, F.,} 2021. {\em Bipartite consensus of linear multi-agent systems by distributed event-triggered control.} Journal of Systems Science and Complexity, 34(3), pp.955-974.  

\bibitem{wang2022modeling} {\sc Wang, Y., Yi, Y., Xu, W. and Zhang, Z.,} 2022. {\em Modeling higher-order interactions in complex networks by edge product of graphs.} The Computer Journal, 65(9), pp.2347-2359.

\bibitem{hameed2012on} {\sc Hameed, K.S., Germina, K.A.,} 2012. {\em On composition of signed graphs.} Discuss. Math. Graph Theory, 32(3), pp.507–516.

\bibitem{pirzada2008anote}{\sc Pirzada, S., Naikoo, T.A., Dar, F.A.,} 2008. {\em A note on signed degree sets in signed bipartite graphs.} Applicable Analysis And Discrete Mathematics, 2(1), pp. 114-117.


\end{thebibliography}
\end{document}